\numberwithin{equation}{section}
\newlist{thmparts}{enumerate}{1}
\setlist[thmparts]{label=(\roman*), ref=\thetheorem(\roman*)}
\crefname{thmpartsi}{Theorem}{Theorems}
\Crefname{thmpartsi}{Theorem}{Theorems}
\newlist{lemparts}{enumerate}{2}
\setlist[lemparts]{label=(\roman*), ref=\thelemma(\roman*)}
\crefname{lempartsi}{Lemma}{Lemmas}
\Crefname{lempartsi}{Lemma}{Lemmas}
\newlist{corparts}{enumerate}{3}
\setlist[corparts]{label=(\roman*), ref=\thecorollary(\roman*)}
\crefname{corpartsi}{Corollary}{Corollaries}
\Crefname{corpartsi}{Corollary}{Corollaries}
\newcommand{\R}{\mathbb{R}}
\newcommand{\E}{\mathbb{E}}
\newcommand{\Var}{\textup{Var}}
\newcommand{\Cov}{\textup{Cov}}
\newcommand{\Pois}{\textup{Po}}
\newcommand{\TV}{\textup{TV}}
\newcommand{\Prob}{\mathbb{P}}
\newcommand{\IH}{\mathsf{IH}}
\newcommand{\rev}{\textup{rev}}
\newcommand{\Z}{\mathbb{Z}}
\newcommand{\N}{\mathbb{N}}
\newcommand{\lcm}{\textup{lcm}}
\newcommand{\Bin}{\textup{Bin}}
\newcommand{\Aut}{\textup{Aut}}
\newtheorem{theorem}{Theorem}[section]
\newtheorem{lemma}[theorem]{Lemma}
\newtheorem{proposition}[theorem]{Proposition}
\newtheorem{corollary}[theorem]{Corollary}
\theoremstyle{definition}
\newtheorem{definition}[theorem]{Definition}
\newenvironment{remark}
  {\pushQED{\qed}\remarkx}
  {\popQED\endremarkx}
\title{Phase Transitions for Sparse Random Sets Under Linear Forms}
\author{Ryan Jeong}
\address{Department of Statistics, Stanford University}
\email{rsjeong@stanford.edu}
\author{Steven J. Miller}
\address{Department of Mathematics and Statistics, Williams College, Williamstown, MA 01267}
\email{sjm1@williams.edu}
\begin{document}

\begin{abstract}
    Let $A \subseteq \{0,1,\dots,N\}$ be a random set in which each element is included independently with probability $p=p(N)$. Fix an integer $h \geq 2$ and a linear form
    \begin{align*}
        L(x_1,\dots,x_h) := u_1x_1 + \cdots + u_hx_h.
    \end{align*}
    We study the random image set
    \begin{align*}
        L(A) = \left\{ L(a_1,\dots,a_h) : a_i \in A \right\},
    \end{align*}
    inside the feasible interval of values of $L$ on $\{0,1,\dots,N\}^h$, as well as the associated representation counts. Our results exhibit two distinct threshold scales. First, there is a \emph{global} transition at $p(N) \asymp N^{-(h-1)/h}$ governing the size of $L(A)$: below this scale collisions are rare and $L(A)$ is sparse, while above it $L(A)$ contains nearly all feasible values. We give sharp asymptotics for the size of $L(A)$ in all regimes, including inside the critical window. Second, there is a \emph{local} transition at $p(N)\asymp N^{-(h-2)/(h-1)}$ governing multiplicities: for typical values in the bulk, the number of essentially distinct representations is asymptotically Poisson below this scale, and Poisson behavior fails above it. For $h \geq 3$ these scales are separated, yielding a regime in which $L(A)$ is already globally close to full while local multiplicities remain approximately Poisson. Our framework subsumes the classical sumset and difference-set models, as well as generalized sumsets of the form $sA-dA$, as special cases. Notably, after correcting its formulation, our global theorem settles the 2009 threshold conjecture of Hegarty-Miller \cite[Conjecture 4.2]{hegarty2009almost} on the behavior of these random images.
\end{abstract}

\maketitle

\section{Introduction} 

\subsection{Motivation} \label{subsec:motivation}

Many threshold phenomena in probabilistic combinatorics follow a common paradigm: one starts with a random discrete input, governed by a density (or intensity) parameter, applies a simple deterministic transformation, and asks how the typical output changes as the underlying parameter varies. Even very simple maps can produce sharp thresholds, and different features of the same random object may appear at different scales. The Erd\H{o}s--R\'enyi random graph $G(n,p)$ is a standard example: as $p$ increases, global properties such as the emergence of a giant component and connectivity occur around specific thresholds, while local statistics such as fixed subgraph counts have their own threshold windows and limit laws (e.g., see \cite{janson2011random}). A different classical example comes from random $k$-SAT, where a random formula suddenly becomes typically unsatisfiable as the clause density increases (e.g., see  \cite{friedgut1999sharp}); here a global feasibility property is tied to the accumulation and overlap of many local constraints (as one might study, for instance, in the clause-variable factor graph).

Viewed this way, it is often helpful to distinguish between macroscopic and microscopic questions. At the macroscopic level one asks about global structure or coverage: does the random object percolate, connect, span, or cover? At the microscopic level one studies local counts and the sources of dependence: the appearance of small substructures, collisions, overlaps, or local obstructions that accumulate as the density increases. Clarifying how these macroscopic and microscopic behaviors interact, and whether they are governed by the same scale of the density parameter, is a recurring issue across threshold problems.

The present paper studies these questions in an additive combinatorics setting. Specifically, we let $A \subseteq \{0,1,\dots,N\}$ be a $p$-random set and we fix an integer linear form $L: \Z^h \to \Z$. This framework subsumes classical sumset/difference set models and generalized sumset models as special cases. We investigate the image $L(A)$ and the associated representation counts (i.e., for a feasible value $k$, the number of injective $h$-tuples $(a_1, \dots, a_h) \in A^h$, modulo natural symmetries, with $L(a_1, \dots, a_h) = k$). Our emphasis is on global coverage of the feasible interval (equivalently, the size of $L(A)^c$) and on local multiplicity statistics for typical values in the bulk. Our main results identify two threshold scales:
\begin{enumerate}
    \item[(i)] a global transition at $p(N)\asymp N^{-(h-1)/h}$ for the size of $L(A)$ and its complement (inside the feasible interval), together with sharp asymptotics in all regimes, including inside the critical window;

    \item[(ii)] a second threshold at $p(N)\asymp N^{-(h-2)/(h-1)}$ governing local multiplicity, below which bulk representation counts admit a Poisson approximation and above which Poisson behavior fails.
\end{enumerate}
For $h \geq 3$ these scales are separated, leaving an intermediate regime in which $L(A)$ is already macroscopically close to saturation while typical bulk representation counts remain well-approximated by a Poisson law (even though the expectation may diverge).

Notably, our global results imply the threshold picture conjectured in 2009 by Hegarty-Miller \cite[Conjecture 4.2]{hegarty2009almost}. A key new ingredient in our analysis is a local theory of representation multiplicities: we establish Poisson behavior in the bulk together with a quantitative description of its breakdown. This local perspective does not appear in \cite{hegarty2009almost} and is what enables our high-dimensional generalization.

Finally, we note that this line of work was partly motivated by the study of \textit{more sums than differences} (MSTD) sets in additive number theory, i.e., finite sets $A \subset \Z$ satisfying $|A+A|>|A-A|$. Conway first observed in the 1960s that such sets exist via the explicit example $\{0, 2, 3, 4, 7, 11, 12, 14\}$. Interest in MSTD sets was revived by Nathanson's survey \cite{nathanson2006problems}, which suggested that, under an appropriate notion of random sampling, MSTD sets should be rare. Subsequent work generalized these notions to arbitrary linear forms \cite{nathanson2007binary}, showed that MSTD sets occur with positive probability in the dense model \cite{martin2006many} and that this phenomenon extends to high-dimensional settings \cite{iyer2012generalized}, whereas \cite[Theorem 1.1]{hegarty2009almost} establishes that MSTD sets occur with vanishing probability in the sparse model. From this perspective, our results extend the sparse model picture of \cite{hegarty2009almost} to high-dimensional settings, supplying the missing component in this narrative.

\subsection{Notation and Conventions} \label{subsec:notation}

In this subsection, we introduce the notation, conventions, and basic quantities used throughout the paper.

\subsubsection{Linear forms, images, and complements} \label{subsubsec:lin_forms_notation}

For a natural number $N$ (we will include $0$ in the natural numbers), we denote $I_N = \{0, \dots, N\}$, and if $N$ is positive, we denote $[N] := \{1, \dots, N\}$. For a positive integer $h$, a \textit{linear form in $h$ variables} is a function $L: \Z^h \to \Z$ of the form
\begin{align*}
    & L(x_1, \dots, x_h) = u_1x_1 + \cdots + u_hx_h; & u_i \in \Z_{\neq 0} \text{ for all } i \in [h].
\end{align*}
Throughout this paper, we understand intervals to be their intersections with $\Z$. Say we are given a linear form $L$ in $h$ variables with coefficients $u_1, \dots, u_h \in \Z_{\neq 0}$. We define
\begin{gather*}
    h_{\text{pos}} = h_{\text{pos}}^{(L)} := \left|\{i \in [h]: u_i > 0\}\right|; \qquad \qquad \qquad
    s = s^{(L)} := \sum_{i: u_i > 0} u_i; \\
    d = d^{(L)} := \sum_{j: u_j < 0} |u_j|; \qquad \qquad \qquad
    m = m^{(L)} := s + d.
\end{gather*}
Without loss of generality, we always assume that 
\begin{align*}
    u_1 \geq \cdots \geq u_{h_{\text{pos}}} > 0 > u_{h_{\text{pos}}+1} \geq \cdots \geq u_h.
\end{align*}
If we are also given a subset $A \subseteq I_N$, we let
\begin{align} \label{eq:L(A)}
    L(A) := \left\{u_1a_1 + \cdots + u_ha_h : a_i \in A \right\}.
\end{align}
From these definitions, it follows that
\begin{align} \label{eq:L(A)_range}
    L(A) \subseteq \left[ -dN, sN \right].
\end{align}
As such, we will define the \textit{complement of $L(A)$} to be
\begin{align} \label{eq:L(A)_complement}
    L(A)^c := \left[ -dN, sN \right] \setminus L(A).
\end{align}
Thus, $|L(A)^c|$ is $|L(A)|$ subtracted from the maximum possible value of $|L(A)|$. Finally, the setting where
\begin{align} \label{eq:balanced_cond}
    \left[u_1, \dots, u_h \right] = \left[-u_1, \dots, -u_h \right]
\end{align}
as multisets will occasionally need to be treated separately. We call linear forms whose coefficients satisfy \eqref{eq:balanced_cond} \textit{balanced}. With the conventions and notation that we have established, it follows for a balanced linear form that $h$ is even, $h_+ = h/2$, $s = d = m/2$, and $u_i = -u_{h-i+1}$ for all $i \in [h]$.

\subsubsection{Injective solutions modulo redundancies} \label{subsubsec:solutions}

We let $\mathfrak{S}_h$ denote the symmetric group on $h$ elements, and we let $\sigma_\rev \in \mathfrak{S}_h$ denote the reversal permutation given by $\sigma_\rev(i) = h-i+1$ for all $i \in [h]$. The symmetric group $\mathfrak S_h$ acts on $\mathbb Z^h$ from the right by permuting coordinates via 
\begin{align*}
    (u_1,\dots,u_h) \cdot \sigma =(u_{\sigma(1)},\dots,u_{\sigma(h)}).
\end{align*}
With respect to this action, we define
\begin{align*}
    \Aut(L) := \text{Stab}_{\mathfrak S_h}\left((u_1,\dots,u_h)\right) = \{\sigma\in\mathfrak S_h : (u_{\sigma(1)},\dots,u_{\sigma(h)}) = (u_1,\dots,u_h) \} \leq \mathfrak S_h.
\end{align*}
In particular, $\Aut(L)$ consists precisely of permutations that only permute indices among equal coefficients. For $k \in [0, mN]$, we define the \textit{redundancy subgroup} corresponding to the value $-dN + k$ (capturing all redundancies amongst $h$-tuples related to permuting the order in which we pass elements through $L$) by\footnote{If $L$ is a balanced linear form, then $\Aut(L)$ fails to capture all such redundancies since
\begin{align*}
    L(a_1, \dots, a_h) = 0 \iff L\left(a_{\sigma_\rev(1)}, \dots, a_{\sigma_\rev(h)}\right) = 0,
\end{align*}
demonstrating the sense in which balanced linear forms serve as an edge case in this article.}
\begin{align*}
    \mathscr{R}_k = \mathscr{R}_k^{(L,N)} := \begin{cases}
        \left\langle \Aut(L), \sigma_{\mathrm{rev}} \right\rangle & \text{if $L$ is balanced and $k=mN/2$}; \\
        \Aut(L) & \text{otherwise}.
    \end{cases}
\end{align*}
Since it will suffice to focus on local representations with $h$ distinct inputs in our analysis, we write
\begin{align*}
    I_N^{\underline h} := \{(a_1, \dots, a_h)\in I_N^h: a_1, \dots, a_h \text{ are pairwise distinct}\}.
\end{align*}
We now define the set of \textit{$L$-expressions}, i.e., representations modulo these symmetries for the candidate value $-dN+k$, as the orbit space
\begin{align} \label{eq:orbits}
    \mathscr{E}_k = \mathscr{E}_k^{(L,N)} := \left\{ (a_1, \dots, a_h)\in I_N^{\underline h}: L(a_1, \dots, a_h) = -dN + k \right\} \Big/ \mathscr R_k.
\end{align}
Equivalently, we identify two injective solutions if one is obtained from the other via a permutation in $\mathscr{R}_k$. Each $\Lambda \in \mathscr{E}_k$ is an $\mathscr{R}_k$-orbit of injective solutions, and we define its \emph{$L$-evaluation} by
\begin{align*}
    L(\Lambda) := -dN+k,
\end{align*}
and its \emph{ground set} by
\begin{align*}
    S(\Lambda) := \{a_1,\dots,a_h\} \subseteq I_N.
\end{align*}
This is well-defined since $\mathscr{R}_k$ acts by permuting coordinates. Under the injectivity constraint we always have $|S(\Lambda)| = h$. The action of $\mathscr{R}_k$ on the solution set in \eqref{eq:orbits} is free, so every orbit has size $|\mathscr{R}_k|$.

\subsubsection{Random model and probabilistic notation}

This article studies what happens when we apply linear forms to \textit{$p(N)$-random subsets} $A \subseteq I_N$. Specifically, we independently include each element from $I_N$ in $A$ with probability $p(N): \N \to (0,1)$, which we take to be a function satisfying the condition
\begin{align} \label{eq:p_assumptions}
    p(N) \ll 1 \ll Np(N).
\end{align}
Formally, we set $\Omega_N := \{0, 1\}^{N+1}$, so subsets $A \subseteq I_N$ and elements $(a_0, \dots, a_N) \in \Omega_N$ correspond in the natural way (that is, for all $i \in I_N$, $a_i = 1$ corresponds to $i \in A$). We thus refer to $A$ and $(a_0, \dots, a_N)$ interchangeably. We let $\Prob_N$ denote the product of $N+1$ instances of the Bernoulli measure with parameter $p$, so that we are (principally) working in the product space $\big(\Omega, \mathcal F, \Prob \big) = \big(\Omega_N, 2^{\Omega_N}, \Prob_N \big)$.

For probability measures $\mu_1$ and $\mu_2$ defined on the same space $(\Omega, \mathcal F)$, we use $d_\TV(\mu_1, \mu_2)$ to denote the total variation distance between $\mu_1$ and $\mu_2$, which we recall is defined by
\begin{align*}
    d_\TV(\mu_1, \mu_2) := \sup_{E \in \mathcal F} \left|\mu_1(E) - \mu_2(E) \right|.
\end{align*}
We now introduce some random variables and processes which will be of central importance throughout the remainder of the article. For an $L$-expression $\Lambda \in \mathscr{E}_k$, we let $X_\Lambda$ denote the Bernoulli random variable corresponding to the event $S(\Lambda) \subseteq A$. So in particular, there exists $\Lambda \in \mathscr{E}_k$ for which $X_\Lambda = 1$ if and only if $-dN + k \in L(A)$. We furthermore have that 
\begin{align*}
    p_\Lambda := \E[X_\Lambda] = p^{|S(\Lambda)|}.
\end{align*}
For a real number $\lambda > 0$, we let $\Pois(\lambda)$ denote the Poisson law with parameter $\lambda$. For a measure $\pi$ defined on $\mathcal B_{[0,1]}$, the Borel $\sigma$-field of $[0,1]$, we let $\Pois(\mathbf{\pi})$ denote the law of a Poisson process with intensity $\mathbf{\pi}$.

\subsubsection{The Irwin-Hall density and the critical profile}

The asymptotic formulae that we derive in \cref{thm:Z_linear_forms_ii}, which characterize the critical window for our global threshold picture, involve the density of the Irwin-Hall distribution of order $h$ (e.g., see \cite{feller1991introduction, laplace1814theorie}). This is the distribution of the sum of $h$ independent random variables uniformly distributed on $[0,1]$. We denote the density of the Irwin-Hall distribution of order $h$ at a real number $x$ by $\IH_h(x)$. Letting $x_+ := \max\{0, x\}$ denote the positive part of the real number $x$, this density is given for $h \geq 2$ by
\begin{align} \label{eq:irwin_hall_density}
    \IH_h(x) := \begin{cases}
        \frac{1}{(h-1)!}\sum_{j=0}^h (-1)^j \binom{h}{j}(x-j)_+^{h-1} & x \leq h, \\
        0 & x > h.
    \end{cases}
\end{align}
For $h=1$, we define $\IH_1(x) := \mathbf 1_{[0,1]}(x)$. We make use of the Irwin-Hall density function primarily via the ``critical profile function"
\begin{align} \label{eq:critical_regime_exponential_term}
    \Phi_L(x) := \frac{1}{|\Aut(L)| \cdot \prod_{i=1}^h |u_i|} \sum_{t_1=0}^{|u_1|-1} \cdots \sum_{t_h=0}^{|u_h|-1} \IH_h\left( x - \sum_{i=1}^h t_i \right),
\end{align}
which dictates the size of $L(A)$ in the critical window.

\subsubsection{Global assumptions and notation}

We now record those conventions that we assume, unless stated otherwise, throughout the rest of the paper. Throughout the analysis, we assume that we have fixed an arbitrary integer $h \geq 2$ and a linear form $L: \Z^h \to \Z$ with coefficients $u_1, \dots, u_h \in \Z_{\neq 0}$ satisfying $\gcd(u_1, \dots, u_h) = 1$. We employ standard asymptotic notation throughout the article: for the sake of completeness, we describe this notation in \cref{sec:asymptotic_notation}. We also assume that $N$ is a large positive integer which tends to infinity, and that all asymptotic notation is with respect to $N$. We omit floor and ceiling symbols when doing so does not affect asymptotics. In \cref{sec:preliminaries,sec:computations,sec:fast_decay,sec:critical_decay,sec:slow_decay,sec:poisson_convergence}, we will assume that \eqref{eq:p_assumptions} holds, and frequently abbreviate $p(N)$ to $p$. Throughout, we use $C > 0$ to denote a positive constant whose value may change from line to line, but depends only on fixed parameters (e.g., $L$ and $h$) unless explicitly indicated otherwise.

\subsection{Main Results} \label{subsec:main_results}

Our primary interest is to understand what happens to $|L(A)|$ and $|L(A)^c|$ as we vary the rate at which the inclusion probability $p(N)$ decays as $N$ grows. Our techniques yield two broad classes of results.

\subsubsection{Global Phase Transition}
At the global scale, we observe the following phase transition concerning the sizes of $L(A)$ and its complement.

\begin{theorem} \label{thm:Z_linear_forms}
    Let $p : \N \to (0,1)$ be a function satisfying \eqref{eq:p_assumptions}. Fix an integer $h \geq 2$ and a linear form $L: \Z^h \to \Z$ with coefficients $u_1, \dots, u_h \in \Z_{\neq 0}$ such that $\gcd(u_1, \dots, u_h) = 1$. Let $A \subseteq I_N$ be a random subset where each element of $I_N$ is independently included in $A$ with probability $p(N)$. The following three situations arise.
    \begin{thmparts}
        \item \label{thm:Z_linear_forms_i}
        
        If $p(N) \ll N^{-\frac{h-1}{h}}$, then
        \begin{align} \label{eq:fast_decay}
            |L(A)| \sim \frac{\left( N \cdot p(N) \right)^h}{|\Aut(L)|}.
        \end{align}
        
        \item \label{thm:Z_linear_forms_ii}
        
        If $p(N) = cN^{-\frac{h-1}{h}}$ for some constant $c > 0$, then
        \begin{align} \label{eq:critical_decay}
            & |L(A)| \sim \left(\sum_{i=1}^h |u_i| - 2\int_0^{m/2} e^{-c^h \Phi_L(x)} dx\right)N;
            & |L(A)^c| \sim \left(2\int_0^{m/2} e^{-c^h \Phi_L(x)} dx \right) N.
        \end{align}
        
        \item \label{thm:Z_linear_forms_iii}
        
        If $p(N) \gg N^{-\frac{h-1}{h}}$, then (where $\Gamma(\cdot)$ denotes the Gamma function)
        \begin{align} \label{eq:slow_decay}
            |L(A)^c| \sim \frac{2 \cdot \Gamma\left( \frac{1}{h-1} \right) \sqrt[h-1]{(h-1)! \cdot |\Aut(L)| \cdot \prod_{i=1}^{h} |u_i|}}{(h-1) \cdot p(N)^{\frac{h}{h-1}}}.
        \end{align}
    \end{thmparts}
\end{theorem}

\cref{thm:Z_linear_forms} identifies $p(N) = N^{-\frac{h-1}{h}}$ as a threshold function, separating a subcritical regime in which $L(A)$ behaves like a near-injective image from a supercritical regime in which $L(A)$ is macroscopically saturated. Indeed, the number of essentially distinct $L$-expressions generated by $A$ is $\sim |A|^h/|\Aut(L)| \sim (Np)^h/|\Aut(L)|$, while the ambient range of $L(A)$ has cardinality $\asymp mN$ by \eqref{eq:L(A)_range}. Thus, if we are below the threshold, it follows that $L(A)$ is asymptotically almost surely ``basically Sidon," i.e., that almost all pairs of such $L$-expressions generated in $L(A)$ correspond to distinct values. If we are above the threshold, it follows from $p(N)^{-\frac{h}{h-1}} \ll N$ that asymptotically almost surely, almost all possible values are generated in $L(A)$. On the threshold $p(N)\sim c N^{-(h-1)/h}$, the expressions in \cref{thm:Z_linear_forms_ii} describe a genuine crossover: as $c$ increases, the limiting proportion of covered values increases from near $0$ to near $1$, interpolating between the sparse and saturated macroscopic behaviors.

As discussed earlier, \cref{thm:Z_linear_forms} completely resolves \cite[Conjecture 4.2]{hegarty2009almost}. Specifically, replacing their conjectured threshold function of $N^{-1/h}$ with $N^{-(h-1)/h}$, we derive their conjectured statement in the regime where $p(N)$ is below the threshold. By taking
\begin{align*}
    R(x_0, \dots, x_h) & = \frac{x_0^h}{|\Aut(L)| \cdot \prod_{i=1}^h |x_i|}, \\
    g_{u_1, \dots, u_h}(y) & = \int_0^{\sum_{i=1}^h |u_i|} \exp \left(- y \sum_{t_1=0}^{|u_1|-1} \cdots \sum_{t_h=0}^{|u_h|-1} \IH_h\left(x - \sum_{i=1}^h t_i\right)\right) dx,
\end{align*}
we observe that \cref{thm:Z_linear_forms_ii} establishes \cite[Equation (4.4)]{hegarty2009almost}, so we also derive their conjectured statement in the regime where $p(N)$ is on the threshold. In the regime where $p(N)$ is above the threshold, we disprove their conjecture and replace it with the correct statement.

A further takeaway of \cref{thm:Z_linear_forms} is that $|\Aut(L)|$ controls the typical size of $L(A)$ and its complement. Specifically, if $|\Aut(L)| < |\Aut(\Tilde{L})|$, then $|L(A)|$ is asymptotically almost surely larger than $|\widetilde L(A)|$. This viewpoint recovers, as a special case, the usual heuristic comparisons between generalized sumsets (and in particular the rare MSTD sets phenomenon) by specializing to linear forms whose coefficients $u_1, \dots, u_h$ all have absolute value $1$. Specifically, fixing nonnegative integers $s,d$ and setting $h = s + d \geq 2$, we define the \textit{generalized sumset} of a set $A \subseteq \N$ via
\begin{align*}
    A_{s,d} := sA - dA
    := \left\{ a_1+\cdots+a_s-a_{s+1}-\cdots-a_{s+d} : a_i \in A \right\}.
\end{align*}
Equivalently, $A_{s,d} = L_{s,d}(A)$ for the linear form
\begin{align*}
    L_{s,d}(x_1, \dots, x_h) := \sum_{i=1}^s x_i - \sum_{i=s+1}^h x_i,
\end{align*}
where, in the notation of \cref{subsubsec:lin_forms_notation}, $s^{(L_{s,d})} = s$ and $d^{(L_{s,d})} = d$. Since $A \subseteq I_N$, we have $A_{s,d}\subseteq[-dN,sN]$, and we define the corresponding complement by
\begin{align*}
    A_{s,d}^c := [-dN,sN]\setminus A_{s,d}.
\end{align*}

\begin{theorem} \label{thm:generalized_sumsets}
    Let $p : \N \to (0,1)$ be any function satisfying \eqref{eq:p_assumptions}. Fix nonnegative integers $s, d, h$ such that $h \geq 2$ and $s + d = h$. Let $A \subseteq I_N$ be a random subset where each element of $I_N$ is independently included in $A$ with probability $p(N)$. Then the following three situations arise.
    \begin{enumerate}
        \item[(i)] If $p(N) \ll N^{-\frac{h-1}{h}}$, then
        \begin{align*}
            |A_{s,d}| \sim \frac{(N\cdot p(N))^h}{s!d!}.
        \end{align*}
        
        \item[(ii)] If $p(N) = cN^{-\frac{h-1}{h}}$ for some constant $c > 0$, then
        \begin{align*}
            & |A_{s,d}| \sim \left(h - 2\int_0^{h/2} \exp \left(- \frac{c^h}{s!d!} \IH_h(x) \right) dx\right)N;
            & |A_{s,d}^c| \sim \left(2\int_0^{h/2} \exp \left(- \frac{c^h}{s!d!} \IH_h(x) \right) dx\right)N.
        \end{align*}
        
        \item[(iii)] If $p(N) \gg N^{-\frac{h-1}{h}}$, then
        \begin{align*}
            |A_{s,d}^c| \sim \frac{2 \cdot \Gamma\left( \frac{1}{h-1} \right) \sqrt[h-1]{(h-1)! s! d!}}{(h-1) \cdot p(N)^{\frac{h}{h-1}}}.
        \end{align*}
    \end{enumerate}
\end{theorem}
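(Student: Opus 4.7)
The plan is to derive Theorem \ref{thm:generalized_sumsets} as a direct specialization of Theorem \ref{thm:Z_linear_forms}, since generalized sumsets correspond exactly to $L$-images for linear forms with $\pm 1$ coefficients. Specifically, I would first observe that $A_{s,d} = L(A)$ when $L: \Z^h \to \Z$ is the $\Z$-linear form defined by
\[
L(x_1, \dots, x_h) = x_1 + \cdots + x_s - x_{s+1} - \cdots - x_h,
\]
so that in the notation of Section \ref{subsec:notation} we have $u_1 = \cdots = u_s = 1$, $u_{s+1} = \cdots = u_h = -1$, and $n_L = s$. Since $\gcd(u_1, \dots, u_h) = 1$, the hypotheses of Theorem \ref{thm:Z_linear_forms} are met, and one also has $s_L = s$, $d_L = d$, and $m_L = s + d = h$, which matches the range $[-dN, sN]$ underlying the definition of $A_{s,d}^c$.

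Next, I would identify the constants appearing in Theorem \ref{thm:Z_linear_forms} with their values in this setting. Because the coefficient vector consists of a block of $s$ copies of $1$ followed by a block of $d$ copies of $-1$, an element $\sigma \in \mathfrak{S}_h$ lies in $\mathfrak{R}_h(L)$ precisely when it permutes $\{1, \dots, s\}$ and $\{s+1, \dots, h\}$ within themselves, so $\mathfrak{R}_h(L) \cong \mathfrak{S}_s \times \mathfrak{S}_d$ and hence $\theta_L = s! \, d!$. Moreover $\prod_{i=1}^h |u_i| = 1$, $\sum_{i=1}^h |u_i| = h$, and each inner sum $\sum_{t_i = 0}^{|u_i|-1}$ collapses to the single term $t_i = 0$, so
\[
\sum_{t_1 = 0}^{|u_1|-1} \cdots \sum_{t_h = 0}^{|u_h|-1} \IH_h\!\left(x - \sum_{i=1}^h t_i\right) = \IH_h(x).
\]

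With these identifications, I would substitute into Theorem \ref{thm:Z_linear_forms}(i)--(iii) directly. Part (i) becomes $|A_{s,d}| \sim (Np(N))^h / (s!\,d!)$; parts (ii) and (iii) become the stated Irwin--Hall integral formulae and the Gamma-function expression, respectively, after cancelling $\prod_i |u_i| = 1$ and using $m_L/2 = h/2$. No further asymptotic analysis is required beyond this bookkeeping.

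Since the proof is entirely a deduction from Theorem \ref{thm:Z_linear_forms}, the only potential obstacle is ensuring the correct treatment of the balanced case $s = d$ (where $L$ is balanced in the sense of Section \ref{subsec:notation}); however, this case only modifies the equivalence structure at the single offset $k = hN/2$, which does not affect any of the asymptotic formulae, so no separate argument is needed. Thus Theorem \ref{thm:generalized_sumsets} follows immediately.
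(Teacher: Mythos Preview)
Your proposal is correct and follows exactly the approach the paper takes: it deduces Theorem \ref{thm:generalized_sumsets} by specializing Theorem \ref{thm:Z_linear_forms} to the linear form with $s$ coefficients equal to $1$ and $d$ equal to $-1$, identifying $\theta_L = s!\,d!$, $\prod_i |u_i| = 1$, $m_L = h$, and collapsing the $t_i$-sums. No additional work is needed.
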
 
In particular, \cref{thm:generalized_sumsets} generalizes \cite[Theorem 1.1]{hegarty2009almost} from sum and difference sets with two summands to generalized sumsets with $h$ summands, under any number $0 \leq s \leq h$ of sums. 

\subsubsection{Local Poisson Convergence}

A key input to our global analysis is sharp control of local representation statistics. We fix a candidate value in the bulk of the feasible range of $L$ and consider the random local count of essentially distinct injective expressions (as captured by $L$-expressions) realizing that value. In the truly sparse regime the individual expressions behave like rare, approximately independent trials, suggesting a Poisson law for this local count. What is perhaps less immediate is that this Poissonian description persists far beyond the global coverage threshold from \cref{thm:Z_linear_forms}. Indeed, there is an intermediate window
\begin{align*}
    N^{-(h-1)/h} \ll p(N) \ll N^{-(h-2)/(h-1)}
\end{align*}
in which collisions between distinct $L$-expressions are already frequent enough to affect the macroscopic size $|L(A)|$, yet the pairwise dependence induced by overlapping ground sets is still weak enough that the local count for typical bulk values remains well-approximated by a Poisson law with the same (now diverging) mean. \cref{thm:poisson_convergence} makes this separation sharp, identifying the precise scale at which Poisson behavior holds in the bulk and when it breaks down.

\begin{theorem} \label{thm:poisson_convergence}
    Assume the same setup as in \cref{thm:Z_linear_forms}. For $k \in [0, mN]$, we let $W_k$ denote the number of $L$-expressions $\Lambda \in \mathscr{E}_k$ such that $S(\Lambda) \subseteq A$. Let $\mu_k = \E[W_k]$. Let $C > 0$ be a constant. The following two situations arise.
    \begin{thmparts}
        \item \label{thm:poisson_convergence_i} 
        
        If $p(N) \ll N^{-\frac{h-2}{h-1}}$, then $d_\TV\left(\mathcal L(W_k), \Pois(\mu_k) \right) \ll 1$ uniformly over $k \in \left[ 0, mN \right]$.

        \item \label{thm:poisson_convergence_ii}

        If $p(N) \gtrsim N^{-\frac{h-2}{h-1}}$, then $d_\TV\left(\mathcal L(W_k), \Pois(\mu_k) \right) = \Omega(1)$ uniformly over $k \in \left[CN, (m-C)N\right]$.
    \end{thmparts}
\end{theorem}
For $h \geq 3$,\footnote{When $h=2$, it holds that $N^{-(h-2)/(h-1)} = 1$, so under our standing sparsity assumption $p(N) \ll 1$ only the Poisson regime in \cref{thm:poisson_convergence_i} is relevant. The breakdown regime for $h=2$ corresponds to $p$ being bounded away from $0$, where the individual representation events are not rare and Poisson approximation in total variation fails.} \cref{thm:poisson_convergence} identifies $p(N)=N^{-(h-2)/(h-1)}$ as the sharp threshold for Poisson approximation of the bulk local representation counts $W_k$. Since $N^{-(h-2)/(h-1)} \gg N^{-(h-1)/h}$, this threshold strictly dominates the global threshold in \cref{thm:Z_linear_forms}: Poissonian local behavior breaks down only at densities where $L(A)$ is already macroscopically close to saturation. \cref{fig:phase_diagram} visually compares the two threshold scales governing global coverage and local Poisson behavior in the bulk, shown schematically by the curves $p(N) = N^{-(h-1)/h}$ and $p(N) = N^{-(h-2)/(h-1)}$.

\begin{figure}[ht]
    \centering
    \includegraphics[width=\linewidth]{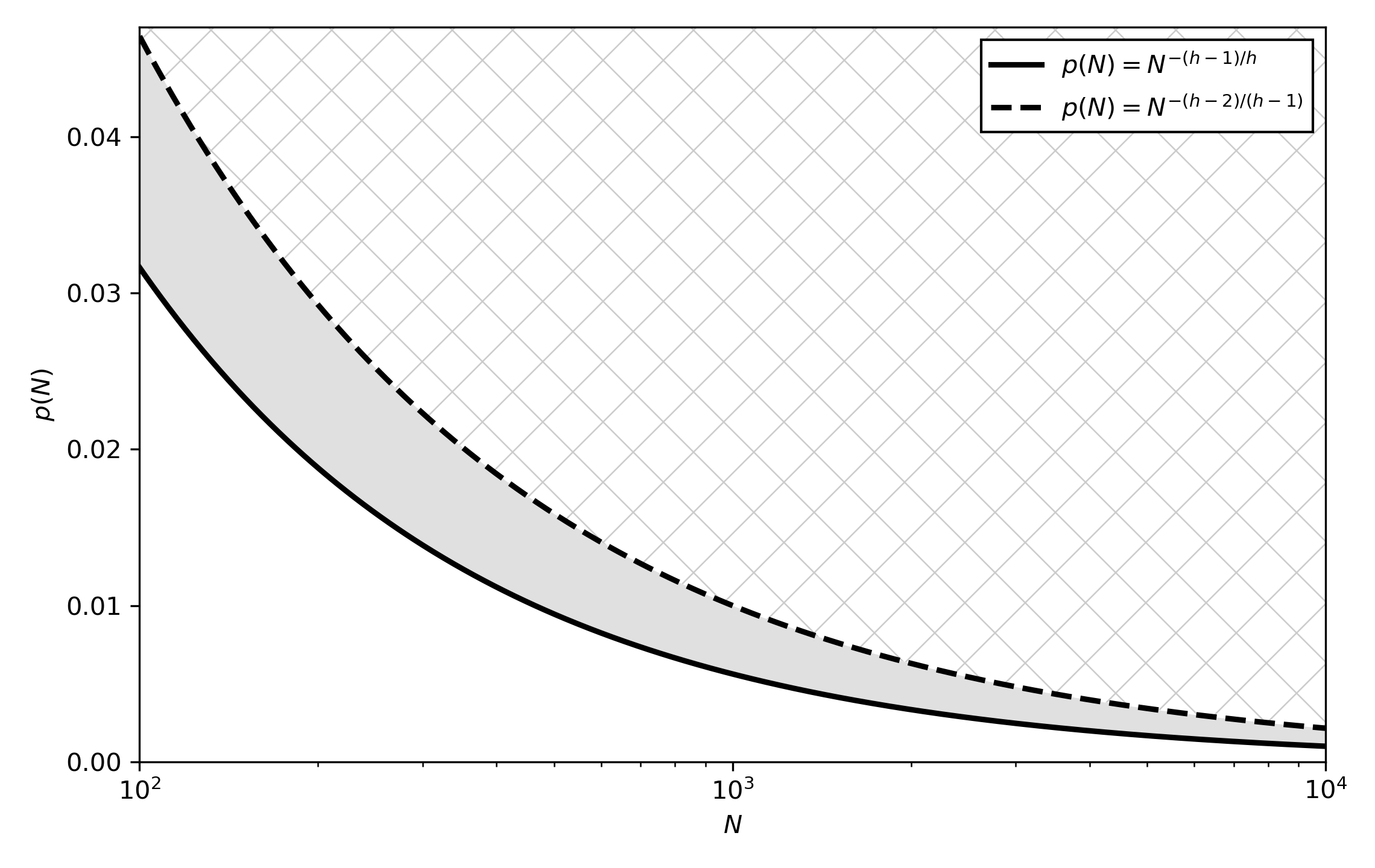}
    \caption{Schematic phase diagram (shown for $h = 4$) illustrating \cref{thm:Z_linear_forms,thm:poisson_convergence}. The solid curve $p(N) = N^{-(h-1)/h}$ represents the global threshold for the coverage of $L(A)$ from \cref{thm:Z_linear_forms}, while the dashed curve $p(N) = N^{-(h-2)/(h-1)}$ represents the local threshold for Poisson approximation of bulk representation counts from \cref{thm:poisson_convergence}. Below the solid curve, $L(A)$ is sparse within its feasible interval and is ``essentially Sidon." The shaded band indicates the intermediate regime in which the random image $L(A)$ occupies nearly all of its feasible range, but bulk representation counts remain well-approximated by a Poisson law. The hatched region indicates densities above the local threshold where dependence from overlaps is no longer negligible and Poisson approximation fails. The horizontal axis is logarithmic in $N$ and the vertical axis is linear in $p$.}
    \label{fig:phase_diagram}
\end{figure}

\subsubsection{Auxiliary Remarks}

We make a number of important comments concerning our main results.

\begin{remark}
    The condition $\gcd(u_1, \dots, u_h) = 1$ assumed in the statements of \cref{thm:Z_linear_forms,thm:poisson_convergence} can be relaxed. Indeed, let $L(x_1, \dots, x_h) = u_1x_1 + \cdots + u_hx_h$ be a linear form on $h$ variables for which $\gcd(u_1, \dots, u_h) > 1$. We define the linear form $\Tilde{L}$ on $h$ variables via
    \begin{align*}
        & \Tilde{L}(x_1, \dots, x_h) = v_1x_1 + \cdots + v_hx_h;
        & v_i = u_i/\gcd(u_1, \dots, u_h).
    \end{align*}
    It is straightforward to show that
    \begin{align} \label{eq:expression_collection_not_coprime}
        \mathscr{E}_k^{(L,N)} = \begin{cases}
            \emptyset & \gcd(u_1, \dots, u_h) \nmid k, \\
            \mathscr{E}_{k/\gcd(u_1, \dots, u_h)}^{(\Tilde{L},N)} & \gcd(u_1, \dots, u_h) \mid k,
        \end{cases}
    \end{align}
    and that
    \begin{align*}
        & |L(A)| = |\Tilde{L}(A)|, 
        & |L(A)^c| = |\Tilde{L}(A)^c| + N \cdot \frac{\gcd(u_1, \dots, u_h) - 1}{\gcd(u_1, \dots, u_h)}\sum_{i=1}^h |u_i|.
    \end{align*}
    In this sense, we will have solved the problem for all linear forms once we have solved it under the setting in which $\gcd(u_1, \dots, u_h) = 1$. We have thus assumed this condition in our main results.
\end{remark}

\begin{remark} \label{rmk:local_landscape_non_robust}
    Unlike the global landscape given by \cref{thm:Z_linear_forms}, which is proven via working with the orbits defined in \cref{subsubsec:solutions} despite the fact that the definition of $L(A)$ does not involve an injectivity constraint, the local landscape of \cref{thm:poisson_convergence} is not robust to relaxing the injectivity condition in the definition of the orbits $\mathscr{E}_k$. This injectivity condition is necessary to yield a representation hypergraph with the dependence structure needed for the bounds in our proof to hold effective uniformly over $k$. For an explicit counterexample, we take $h = 4$ and work with the linear form
    \begin{align*}
        L(x_1, x_2, x_3, x_4) = x_1 + x_2 - x_3 - x_4.
    \end{align*}
    We take $k = 2N = mN/2$ and we assume that 
    \begin{align} \label{eq:counterexample_assumptions}
        N^3p^4 \ll 1.
    \end{align}
    We are working in the subcritical regime of \cref{thm:poisson_convergence} since $N^2p^3 \ll N^3p^4$. The number of injective $4$-tuples $(a_1, a_2, a_3, a_4)$ that are solutions to the equation
    \begin{align*}
        L(a_1, a_2, a_3, a_4) = a_1 + a_2 - a_3 - a_4 = -dN + k = 0
    \end{align*}
    is, by \eqref{eq:counterexample_assumptions}, equal to zero with high probability. On the other hand, the number of noninjective effective solutions is equal to
    \begin{align*}
        \binom{|A|}{2} + |A| = \frac{|A|^2 + |A|}{2}
    \end{align*}
    with high probability. Indeed, modulo coefficient-preserving symmetries, each unordered pair $\{a,b\} \subseteq A$ with $a \neq b$ yields the solution class represented by $(a,b,a,b)$, while each $a \in A$ yields the solution $(a,a,a,a)$. Since the number of effective solutions is typically a number of the form $(z^2+z)/2$ for $z \in \N$, the number of effective solutions is not close in total variation to the Poisson distribution with the corresponding expectation, which itself tends to infinity as $N \to \infty$.

    With minor adaptations to our arguments, it is possible to recover the statement of \cref{thm:poisson_convergence} for linear forms exhibiting additional structure with the injectivity condition relaxed in our definition of local representation. For instance, it can be shown in this relaxed setting that \cref{thm:poisson_convergence} holds for all linear forms $L$ for which there does not exist a nonempty subset of coefficients which sums to zero, as the asymptotic bounds of \cref{lem:L_expression_tuples_lower_bounds,lem:L_expressions_tuples_upper_bounds} still hold. We do not pursue this here to focus on those aspects of the theory that are in some sense universal.
\end{remark}

\subsection{Proof Outline}

We now sketch the main ideas behind the proofs of our main results, emphasizing the difficulties introduced by the $h \geq 3$ setting of the problem. At a high level, the proof of \cref{thm:Z_linear_forms} follows a standard template: we compute the expectations of $|L(A)|$ and $|L(A)^c|$ in each regime, and then we show that these random variables concentrate strongly about their expectations. The main complication is that for $h \geq 3$ there are many more ways for representations of the same value to overlap, introducing additional structure and dependence which makes computations via a sieve (as was done for the $h=2$ setting by \cite{hegarty2009almost}) unwieldy.

To control the dependence, for each candidate value $-dN+k$ in $L(A)$, we instead work with the corresponding orbit model defined in \cref{subsubsec:solutions}. The injectivity restriction therein is both the right one for the clean uniform asymptotic statements that we obtain in \cref{subsec:asymptotic_enumeration} (without it, tied inputs can create additional degeneracies in exceptional settings, such as in the example considered in \cref{rmk:local_landscape_non_robust}) and ultimately benign to obtain asymptotics at the macroscopic scale. In this orbit framework, dependence between distinct $L$-expressions is governed entirely by overlaps of their ground sets, giving way to a tractable dependency graph that we can analyze using the asymptotics we derive for the orbit model and the Poisson paradigm.

With this framework in place, the three global regimes are handled as follows. In the subcritical range $p \ll N^{-(h-1)/h}$, overlaps are rare, so $|L(A)|$ is essentially the number of injective $h$-tuples (modulo redundancies) that land in $A$. In the critical window $p=cN^{-(h-1)/h}$, we apply the Stein-Chen method based on the aforementioned dependency graph to estimate the probability of a missing element uniformly in the bulk. Summing over $k$ produces the integral expressions in \cref{thm:Z_linear_forms_ii}. Lastly, in the supercritical regime $p \gg N^{-(h-1)/h}$, we use Janson-type exponential bounds to show that the bulk of the feasible interval is covered with overwhelming probability, so missing values come predominantly from the fringes of this feasible interval. Performing a similar computation over the fringe contributions then leads to the Gamma-function asymptotic in \cref{thm:Z_linear_forms_iii}.

To complete the proof of \cref{thm:Z_linear_forms}, we upgrade these expectation asymptotics into asymptotic almost sure statements. In the subcritical regime, a standard first moment argument suffices, but once $p \gtrsim N^{-(h-1)/h}$ such crude arguments no longer control the fluctuations. Thus, in the critical and supercritical regimes, we invoke the Kim-Vu martingale machinery for multivariate polynomials on the product space $\Omega$. Specifically, revealing the coordinates sequentially, we bound the one-step conditional influence $\Delta_n(A)$, i.e., the conditional expected change in $|L(A)^c|$ when the $n$\textsuperscript{th} coordinate of $\Omega$ is toggled. Since changing this $n$\textsuperscript{th} coordinate only affects values whose representations use $n$, and typical values in the bulk admit many alternative representations avoiding any fixed element, the bulk contribution to $\Delta_n(A)$ is typically negligible. Consequently, controlling $\Delta_n(A)$ again reduces to bounding fringe contributions.

Finally, the local threshold \cref{thm:poisson_convergence} uses essentially the same machinery. We again apply bounds based on the Stein-Chen method to the local counts to prove \cref{thm:poisson_convergence_i}. Once $p(N)\gtrsim N^{-(h-2)/(h-1)}$, overlaps among distinct $L$-expressions are no longer negligible. More specifically, since the indicators corresponding to distinct representations are positively related, these overlaps produce a uniform inflation of the local count variance for bulk values of $k$. This gives rise to a uniform $\Omega(1)$ obstruction to Poisson behavior throughout the bulk, proving \cref{thm:poisson_convergence_ii}.

\subsection{Organization}

The rest of the paper is organized as follows. In \cref{sec:preliminaries}, we establish much of the machinery that we will invoke in the proofs of our main results. In \cref{sec:computations}, we perform some standard computations that we will make use of in later sections. In \cref{sec:fast_decay,sec:critical_decay,sec:slow_decay}, we prove \cref{thm:Z_linear_forms}. In \cref{sec:poisson_convergence}, we prove \cref{thm:poisson_convergence}. We conclude the work in \cref{sec:future_directions} with many suggested directions for future research.

\section{Preliminaries} \label{sec:preliminaries}

In this section, we gather several results that we invoke in the proofs of our main results.

\subsection{Asymptotic Enumeration} \label{subsec:asymptotic_enumeration}

In this subsection, we present several enumerative asymptotic results for $L$-expressions which we invoke later in our arguments. Since the proofs are purely combinatorial and somewhat technical, we defer them to \cref{sec:preliminaries_proofs}. In particular, \cref{lem:number_of_subsets_asymptotics} shows that for bulk values of $k$, $\Phi_L(k/N)$ may be viewed as a local density for the number of $L$-expressions (i.e., of solutions in $I_N^{\underline{h}}$ modulo the redundancy subgroup $\mathscr{R}_k$ to $L(a_1,\dots,a_h) = -dN+k$).

\begin{lemma} \label{lem:number_of_subsets_asymptotics}
    Fix $K: \N \to \N$ satisfying $1 \ll K(N) \ll N$. 
    Uniformly over $k \in [0, K(N)]$, it holds that
    \begin{align} \label{eq:number_of_subsets_fringe_asymptotics}
        |\mathscr{E}_k| = |\mathscr{E}_{mN-k}| \ll N^{h-1}.
    \end{align}
    Uniformly over $k \in [K(N), mN/2)$, it holds that
    \begin{align} \label{eq:number_of_subsets_asymptotics}
        |\mathscr{E}_k| = |\mathscr{E}_{mN-k}| \sim \Phi_L(k/N) \cdot N^{h-1}.
    \end{align}
\end{lemma}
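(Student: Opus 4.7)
The plan is to reduce the enumeration of $L$-expressions to counting lattice points on a hyperplane inside a box, and then extract asymptotics via a local central limit theorem. I would begin with the change of variables $b_i := a_i$ if $u_i > 0$ and $b_i := N - a_i$ if $u_i < 0$, which bijects $I_N^h$ with itself. Setting $c_i := |u_i|$, a direct computation gives
$$L(a_1, \dots, a_h) \;=\; -d_L N + \sum_{i=1}^h c_i b_i,$$
so $h$-tuples with $L$-evaluation $-d_L N + k$ correspond to tuples $(b_1, \dots, b_h) \in [0,N]^h \cap \Z^h$ satisfying $\sum c_i b_i = k$; let $N_k$ denote this count. Since equal $u_i$'s correspond to equal $c_i$'s, the action of $\mathfrak{R}_h(L)$ is intertwined with the same permutation action on the $b$-side, so Burnside's lemma yields $|\mathscr{D}_k| = N_k/\theta_L + O(N^{h-2})$, the error coming from non-identity permutations whose fixed-point sets force at least two coordinates to coincide. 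In the balanced case, a short calculation shows that $\sigma_\rev$ acts in $b$-coordinates as $(b_1, \dots, b_h) \mapsto (N - b_h, \dots, N - b_1)$ and preserves $\sum c_i b_i = k$ precisely when $k = mN/2$, which supplies the factor $2^{\mathbf{1}\{L \text{ balanced},\, k = mN/2\}}$ absorbed into $\lambda_k$. The symmetry $|\mathscr{D}_k| = |\mathscr{D}_{mN-k}|$ then follows from the involution $b_i \mapsto N - b_i$ on the solution set.

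For the fringe regime $k \in [0, g(N)]$, the box constraint is vacuous (since $b_i \leq k/c_i \leq g(N) \lll N$), so $N_k$ equals the number of unrestricted non-negative solutions to $\sum c_i b_i = k$, namely $[z^k] \prod_i (1 - z^{c_i})^{-1}$. This coefficient is a quasi-polynomial in $k$ of degree $h-1$, hence $N_k = O(g(N)^{h-1}) = o(N^{h-1})$, establishing \eqref{eq:number_of_subsets_fringe_asymptotics}.

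For the bulk regime $k \in [g(N), mN/2]$, I would let $B_1, \dots, B_h$ be i.i.d.\ uniform on $\{0, 1, \dots, N\}$ and set $S := \sum c_i B_i$, so $N_k = (N+1)^h \, \Prob(S = k)$. Since $\gcd(c_1, \dots, c_h) = 1$, $S$ is aperiodic on $\Z$, and a local limit theorem obtained by Fourier-inverting $\phi_S(\xi) = \prod_i \frac{\sin((N+1) c_i \xi/2)}{(N+1) \sin(c_i \xi /2)}$ on a shrinking neighborhood of $\xi = 0$, with uniform tail control elsewhere, yields $\Prob(S = k) = \frac{1}{N} f(k/N) + o(1/N)$ uniformly in $k$, where $f$ is the density of $T := \sum c_i U_i$ for $U_i$ i.i.d.\ uniform on $[0,1]$. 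To evaluate $f$, I decompose $c_i U_i = T_i + V_i$ with $T_i$ discrete uniform on $\{0, \dots, c_i - 1\}$ and $V_i$ uniform on $[0,1]$, independent. Since $\sum V_i$ has density $\IH_h$, conditioning on $(T_1, \dots, T_h)$ produces
$$f(x) \;=\; \frac{1}{\prod_i c_i} \sum_{t_1=0}^{c_1-1} \cdots \sum_{t_h=0}^{c_h - 1} \IH_h\!\left(x - \sum_i t_i\right).$$
Combining, $N_k \sim N^{h-1} f(k/N)$, which upon dividing by $\theta_L$ (and by $2$ in the balanced edge case) gives $|\mathscr{D}_k| \sim \lambda_k N^{h-1}$, establishing \eqref{eq:number_of_subsets_asymptotics}.

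The main technical obstacle is the uniform local limit estimate: the $o(1/N)$ error must be $o(f(k/N))$ uniformly as $k/N$ ranges over $[g(N)/N, \, m/2]$, including the delicate transition as $k/N \to 0^+$ where $f$ vanishes polynomially. A robust alternative that would subsume both regimes simultaneously is a direct saddle-point or contour analysis of the generating function $\prod_i (1 - z^{c_i(N+1)})/(1 - z^{c_i})$, which delivers the $O(k^{h-1})$ fringe bound and the bulk asymptotic in a single sweep; in either approach, the bulk of the care lies in estimating an oscillatory sum on a shrinking arc near $z = 1$.
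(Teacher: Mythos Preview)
Your approach is correct in outline and genuinely different from the paper's. Both arguments begin with the same substitution $b_i = a_i$ or $N-a_i$ reducing to $\sum_i |u_i| b_i = k$, and both handle the symmetry $|\mathscr{D}_k| = |\mathscr{D}_{mN-k}|$ and the $\theta_L$ overcounting in the same way. The divergence is in the bulk asymptotic for $N_k$. The paper proceeds entirely combinatorially: it partitions solutions according to $t_i = \lfloor |u_i| b_i / N \rfloor$, reducing each block to a count of weak compositions of $k - (\sum t_i)N$ with parts at most $N$ lying in prescribed residue classes, and then establishes the asymptotic for that count by induction on $h$ using the known unimodality and symmetry of Gaussian binomial coefficients together with classical partition asymptotics. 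Your route is analytic: interpret $N_k/(N+1)^h$ as $\Prob(S=k)$ for $S = \sum |u_i| B_i$ with $B_i$ uniform on $\{0,\dots,N\}$, invoke a local limit theorem to get $\Prob(S=k) \sim N^{-1} f(k/N)$, and identify $f$ with the mixture-of-shifted-Irwin--Hall formula via the elegant decomposition $|u_i| U_i = T_i + V_i$. This makes the appearance of $\IH_h$ in $\lambda_k$ completely transparent, whereas in the paper it enters through the coefficient asymptotics of $\binom{N+h}{h}_q$.

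The price you pay, and you correctly flag it, is uniformity near the edge. The standard lattice local limit theorem gives $\Prob(S=k) = N^{-1} f(k/N) + o(N^{-1})$ uniformly in $k$, but for $k \in [g(N), N]$ one has $f(k/N) \asymp (k/N)^{h-1} \to 0$, so an additive $o(N^{-1})$ error does not yield a multiplicative $(1+o(1))$ estimate there; you need the error to be $o(k^{h-1}/N^h)$. This is obtainable (e.g.\ via a tilted/large-deviation local limit theorem or, as you suggest, a direct saddle-point analysis of $\prod_i (1-z^{|u_i|(N+1)})/(1-z^{|u_i|})$), but it is real work and should not be waved through. The paper sidesteps this issue because its combinatorial lemmas already deliver the asymptotic $p_h(k,N) \sim k^{h-1}/((h-1)!\,h!)$ uniformly for $k \in [g(N), N]$ as a direct consequence of unrestricted partition asymptotics, with no boundary-layer analysis required.
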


\begin{lemma} \label{lem:L_expression_tuples_lower_bounds}
    There exists a constant $k_0 = k_0^{(L)} > 0$ for which the following two statements hold uniformly over $k \in [k_0, mN/2]$.
    \begin{lemparts}
        \item \label{lem:L_expression_tuples_lower_bounds_i}
        
        Unless $h = 2$, $L(x_1, x_2) = x_1 - x_2$, and $k = N$, it holds that $|\mathscr{E}_k|= \Omega(k^{h-1})$.
        
        \item \label{lem:L_expression_tuples_lower_bounds_ii}
        
        For $h \geq 3$, the number of $2$-tuples $\left(\Lambda_1, \Lambda_2 \right) \in \mathscr{E}_k^2$ such that
        \begin{align} \label{eq:L_L_expression_tuples_lower_bounds_pair_cond}
            & S(\Lambda_1) \cap S(\Lambda_2) \neq \emptyset,
            & \left| S(\Lambda_1) \cup S(\Lambda_2) \right| = 2h-1
        \end{align}
        is $\Omega(k^{2h-3})$.
    \end{lemparts}
\end{lemma}

\begin{lemma} \label{lem:L_expressions_tuples_upper_bounds}
    Fix positive integers $t, \ell$ such that $t \in [4]$ and $h \leq \ell \leq \max\{h,th-1\}$. Uniformly over $k \in (0, mN/2]$, the number of $t$-tuples $\left(\Lambda_1, \dots, \Lambda_t \right) \in \mathscr{E}_k^t$ such that $\Lambda_1, \dots, \Lambda_t$ are distinct and satisfy
    \begin{align} \label{eq:L_expressions_tuples_upper_bounds}
        & S(\Lambda_i) \cap \left( \bigcup_{j\neq i} S(\Lambda_j) \right) \neq \emptyset \text{ for all } i \in [t] \text{ if } t \geq 2;
        & \left|\bigcup_{i=1}^t S(\Lambda_i)\right| = \ell
    \end{align}
    is $O(k^{\ell-\lceil (\ell+1)/h \rceil})$ if $t \geq 2$ and $O(k^{h-1})$ if $t = 1$ and $\ell = h$.
\end{lemma}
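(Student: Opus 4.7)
The plan is to bound the count by fixing a combinatorial ``incidence pattern'' for each tuple and counting lattice solutions to the resulting linear system. For a tuple satisfying \eqref{eq:L_expressions_tuples_upper_bounds}, enumerate the $\ell$ elements of $\bigcup_i S(\Lambda(i))$ as $b_1,\dots,b_\ell$ and record, for each $j \in [t]$, the function $f_j: [h] \to [\ell]$ specifying which ground-set label fills each summand slot of $\Lambda(j)$. Since $\ell \leq \max\{h, th - 1\} \leq 4h - 1$ there are only $O_h(1)$ patterns, so I only need to bound the count realizing a single pattern. Passing to the $b$-variables $b_i = a_i$ if $u_i > 0$ and $b_i = N - a_i$ if $u_i < 0$, the $L$-evaluation equations $L(\Lambda(j)) = -dN + k$ become $E_j: \sum_{i=1}^{\ell} c_{j,i}\, b_i = k$, where each $c_{j,i}$ is a nonnegative integer (equal to the sum of $|u_s|$ over $s \in f_j^{-1}(i)$) and each row $(c_{j,\cdot})$ has at most $h$ nonzero entries.

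Let $r$ be the rank of the $t \times \ell$ coefficient matrix. The solution set is an $(\ell - r)$-dimensional affine subspace of $\R^\ell$, and because all coefficients are nonnegative with right-hand sides equal to $k$, every variable $b_i$ lying in the support of some equation is pinned to $[0,k]$. The connectivity condition in \eqref{eq:L_expressions_tuples_upper_bounds} guarantees this applies to every $b_i$. Parameterizing the solution space by $\ell - r$ free integer coordinates each in $[0, O(k)]$ then gives $O(k^{\ell - r})$ lattice points per pattern. The case $t = 1$ is immediate since at least one nonzero $c_{1,i}$ is forced, giving $r = 1$ and the advertised $O(k^{\ell - 1})$. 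It remains to prove the rank bound $r \geq \lceil (\ell + 1)/h \rceil$ when $t \geq 2$.

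Any dependent equation is an affine combination of a maximal independent set of $r$ equations (the scalar coefficients must sum to $1$ to match the common right-hand side $k$), so its support lies in the union of the supports of the independent ones; this immediately yields $\ell \leq hr$ and hence $r \geq \lceil \ell/h \rceil$. To upgrade to $r \geq \lceil (\ell+1)/h \rceil$ I must rule out $r = \ell/h$ when $h \mid \ell$: in that extremal case the $r$ independent equations would have pairwise disjoint supports partitioning $[\ell]$ into blocks of exactly $h$ variables, so any further equation $E_j = \sum_s \alpha_s E_{i_s}$ with $\sum_s \alpha_s = 1$ and support size $\leq h$ would force exactly one $\alpha_s$ equal to $1$, making $E_j$ coincide with one of the $E_{i_s}$. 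Two tuple entries then represent the same $L$-expression (after accounting for the finitely many patterns), contradicting the distinctness of the tuple entries (in the intended applications in Sections \ref{sec:fast_decay}-\ref{sec:slow_decay}) or reducing the count to one controlled by induction on $t$ with base case $t = 1$. The main obstacle I anticipate is precisely the bookkeeping in this ``tiled'' borderline case — translating the coincidence $E_j = E_{i_s}$ into a coincidence of $L$-expressions requires absorbing the finite $\mathfrak{R}_h(L)$-ambiguity into the enumeration of patterns so that the reduction to smaller $t$ is clean.
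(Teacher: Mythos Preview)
Your linear-algebra rank framing is a reasonable alternative to the paper's explicit case analysis, but there is a genuine gap at the change-of-variables step. The substitution ``$b_i = a_i$ if $u_i>0$, $b_i = N-a_i$ if $u_i<0$'' is per \emph{slot}, not per ground-set value: the same value $b_i$ can occupy slots of both signs in the same row, so after grouping by value the coefficient is $c_{j,i}=\sum_{s\in f_j^{-1}(i)} u_s$, not $\sum_{s\in f_j^{-1}(i)} |u_s|$. In particular $c_{j,i}$ can vanish, and then $b_i$ drops out of $\operatorname{supp}(E_j)$ entirely. This breaks both the claim that every $b_i$ is pinned to $[0,k]$ and the inequality $\ell\le hr$ (for $L=2x_1+x_2-x_3-2x_4$ and pattern $(a,b,b,a)$ the row equation collapses to $0=-dN+k$, so the rank is $0$ while $\ell=2$). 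Your upgrade to $r\ge\lceil(\ell+1)/h\rceil$ rests on the disjoint size-$h$ supports picture, which is exactly what cancellation can destroy.

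The paper avoids rank altogether. For each range of $\ell$ it explicitly exhibits values $v_1,\dots,v_r$ with $v_j$ lying in a row containing none of $v_1,\dots,v_{j-1}$, so that successive row equations pin the $v_j$'s once the other $\ell-r$ values (each with $O(k)$ choices, from the per-slot bound) are fixed. The borderline---your $r=\ell/h$ case---is dispatched by showing that when no ``new row'' exists, two rows carry the same multiset of values; comparing their $L$-equations then forces $u_i=-u_{\sigma^{-1}(i)}$ for all $i$ and $-dN+k=0$, i.e.\ $L$ balanced and $k=mN/2$, whence $\sigma\in\mathscr{R}_k$ and the two rows violate the distinctness condition built into the matrix encoding. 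Your claim that $E_j=E_{i_s}$ forces $\Lambda(j)=\Lambda(i_s)$ is in fact correct when both $f_j$ and $f_{i_s}$ are bijections onto a common size-$h$ support (one checks $f_{i_s}^{-1}\circ f_j\in\mathfrak{R}_h(L)$), but that hypothesis is precisely what you have not secured; and the fallback ``induction on $t$'' is undeveloped, since dropping a repeated entry need not preserve the connectivity hypothesis in~\eqref{eq:L_expressions_tuples_upper_bounds}.
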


\subsection{Poisson Approximation}

In \cref{sec:critical_decay,sec:slow_decay} we will approximate local representation counts by Poisson laws using the Stein--Chen method on an explicit dependency graph over $L$-expressions. This is very natural here because each candidate value $-dN+k$ can be realized by many potential $L$-expressions, each occurring with probability $p^{|S(\Lambda)|}$, and the dependence is confined to overlaps of ground sets. We also use these tools in \cref{sec:poisson_convergence} to prove the matching lower bound underlying the second threshold by showing that Poisson approximation fails for each $k$ in the bulk once overlaps induce non-negligible local dependence. We record the needed definitions and bounds below; see \cite{arratia1990poisson, barbour1992poisson, aldous2013probability} and \cite[Chapter 8]{alon2016probabilistic} for more general treatments of this area.

\begin{definition}[\cite{barbour1992poisson}, Equation (1.1)]
    Let $I$ be a finite index set. For each $\alpha \in I$, let $X_\alpha$ be a Bernoulli random variable. We say that the random variables $\left\{X_\alpha: \alpha \in I\right\}$ are \textit{positively related} if, for every $\alpha \in I$, there exist random variables $\left\{Y_{\beta \alpha}: \beta \in I \setminus \{\alpha\}\right\}$ defined on the same probability space such that
    \begin{align*}
        & \mathcal L\left(Y_{\beta \alpha}: \beta \in I\right) = \mathcal L\left(X_\beta: \beta \in I \ | \ X_\alpha = 1 \right),
        & Y_{\beta \alpha} \geq X_\beta \text{ for all } \beta \in I \setminus \{\alpha\}.
    \end{align*}
\end{definition}

\begin{theorem}[\cite{arratia1989two}, Theorem 1] \label{thm:stein_chen}
    Let $I$ be a finite index set. For each $\alpha \in I$, let $X_\alpha$ be a Bernoulli random variable with parameter $p_\alpha$, and let $B_\alpha \subseteq I$ denote the \textit{dependency set} of $\alpha$ for every $\alpha \in I$, i.e., $\beta \in B_\alpha$ if and only if $X_\alpha$ and $X_\beta$ are dependent. Define
    \begin{align*}
        & b_1 := \sum_{\alpha \in I} \sum_{\beta \in B_\alpha} p_\alpha p_\beta, \\
        & b_2 := \sum_{\alpha \in I} \sum_{\alpha \neq \beta \in B_\alpha} \Pr[X_\alpha X_\beta = 1].
    \end{align*}
    Let $W := \sum_{\alpha \in I} X_\alpha$, and let $\mu := \E[W] = \sum_{\alpha \in I} p_\alpha$. Then
    \begin{align*}
        \left|\Pr[W = 0] - e^{-\mu} \right| < \min\{1, \mu^{-1}\}(b_1+b_2).
    \end{align*}
\end{theorem}

We next record the Janson inequalities, which we use in \cref{sec:slow_decay} to obtain exponentially decaying bounds on the probability that a bulk value $-dN + k$ fails to lie in the image set $L(A)$. We adapt the statement specifically for use in the present work.
\begin{theorem}[{\cite[Theorems 8.1.1 and 8.1.2]{alon2016probabilistic}}] \label{thm:janson_ineqs}
    Fix $k \in [0, mN]$. Let $A$ be a $p$-random subset of the universal set $I_N$. Let $\{ S(\Lambda) \}_{\Lambda \in \mathscr{E}_k}$ be subsets of $I_N$. For each $\Lambda \in \mathscr{E}_k$, we let $E_\Lambda$ denote the event that $S(\Lambda) \subseteq A$. For $\Lambda, \Lambda' \in \mathscr{E}_k$, we write $\Lambda \sim \Lambda'$ if $\Lambda \neq \Lambda'$ and $S(\Lambda) \cap S(\Lambda') \neq \emptyset$. We define
    \begin{align*}
        & \mu_k := \sum_{\Lambda \in \mathscr{E}_k} \Prob\left[ E_\Lambda \right];
        & \Delta_k^* := \mathop{\sum_{\left(\Lambda, \Lambda'\right) \in \mathscr{E}_k^2}}_{ \Lambda \sim \Lambda'} \Prob\left[ E_\Lambda \land E_{\Lambda'} \right].
    \end{align*}
    Then it holds that
    \begin{align*}
        \Prob\left[ \bigwedge_{\Lambda \in \mathscr{E}_k} \overline{E_\Lambda} \right] \leq e^{-\mu_k + \Delta_k^*/2} \land e^{-I_{\{\Delta_k^* \geq \mu_k \}}\mu_k^2/ (2\Delta_k^*)}.
    \end{align*}
\end{theorem}
We assume the setup of \cref{thm:janson_ineqs} and record an important identity here. For each $\Lambda \in \mathscr{E}_k$, we let $X_\Lambda$ denote the indicator random variable associated with the event $E_\Lambda$. Working with the quantities introduced in \cref{thm:stein_chen}, we let $B_\Lambda$ denote the dependency set of $X_\Lambda$ (as defined in \cref{thm:stein_chen}) and we let $b_2(k)$ correspond to the quantity $b_2$ for this setting. It then follows that 
\begin{align} \label{eq:delta_expression}
    \Delta_k^* = \frac{1}{2}\sum_{\Lambda \in \mathscr{E}_k} \sum_{\Lambda' \in B_\Lambda \setminus \Lambda} \Prob\left[ E_\Lambda \land E_{\Lambda'} \right] = \frac{1}{2}\sum_{\Lambda \in \mathscr{E}_k} \sum_{\Lambda' \in B_\Lambda \setminus \Lambda} \Prob\left[ X_\Lambda X_{\Lambda'} = 1 \right] = b_2(k)/2.
\end{align}
As such, we have that
\begin{align} 
    & \Prob\left[ \bigwedge_{\Lambda \in \mathscr{E}_k} \overline{E_\Lambda} \right] \stackrel{(\text{\cref{thm:janson_ineqs})}}{\leq} e^{-\mu_k + \Delta_k^*/2} \land e^{-I_{\{\Delta_k^* \geq \mu_k \}}\mu_k^2/ (2\Delta_k^*)} \nonumber \\
    & \qquad \leq I_{\{\Delta_k^* \geq \mu_k \}}e^{-\mu_k^2/ (2\Delta_k^*)} + I_{\{\Delta_k^* < \mu_k \}}e^{-\mu_k + \Delta_k^*/2} \nonumber \\
    & \qquad \leq I_{\{\Delta_k^* \geq \mu_k \}}e^{-\mu_k^2/ (2\Delta_k^*)} + I_{\{\Delta_k^* < \mu_k \}}e^{-\mu_k/2} \leq e^{-\mu_k^2/ (2\Delta_k^*)} + e^{-\mu_k/2} \stackrel{\eqref{eq:delta_expression}}{=} e^{-\mu_k^2/ b_2(k)} + e^{-\mu_k/2}. \label{eq:janson_main_bound}
\end{align}
We now collect the Poisson approximation theorems that we will invoke to control the law of our counting variables in total variation.
\begin{theorem}[\cite{arratia1989two}, Theorems 1 and 2] \label{thm:poisson_process_convergence}
    Assume the setup of \cref{thm:stein_chen}. Let $\Xi$ denote the corresponding dependent Bernoulli process with intensity $\pi$. Then we have that
    \begin{align*}
        &  d_\TV\left( \mathcal L(W), \Pois(\mu)\right) \leq \min\{1, \mu^{-1}\} \left(b_1 + b_2\right); \\
        & d_\TV\left(\mathcal L(\Xi), \Pois(\pi)\right) \leq b_1 + b_2.
    \end{align*}
\end{theorem}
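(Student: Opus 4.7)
My plan is to prove both bounds via the Stein--Chen method. For the scalar bound, for each $A \subseteq \N$ I would first solve the Stein equation for $\Pois(\lambda)$: find $g_A : \N \to \R$ such that $\lambda g_A(k+1) - k g_A(k) = \I_A(k) - \Pois(\lambda)(A)$ for every $k \geq 0$. This recursion admits an explicit solution in terms of partial Poisson tail probabilities, and a coupling of $\Pois(\lambda)$ with a version shifted by one yields the Stein factor bound $\|\Delta g_A\|_\infty := \sup_k |g_A(k+1) - g_A(k)| \leq \min\{1, \lambda^{-1}\}(1 - e^{-\lambda}) \leq \min\{1, \lambda^{-1}\}$. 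Since $d_\TV(\mathcal L(W), \Pois(\lambda)) = \sup_A |\E[\lambda g_A(W+1) - W g_A(W)]|$, the problem reduces to bounding this expectation uniformly in $A$.

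Next, I would expand $\E[W g(W)] = \sum_\alpha \E[X_\alpha g(W)] = \sum_\alpha p_\alpha \E[g(W) \mid X_\alpha = 1]$, and for each $\alpha \in I$ couple $W$ with the size-biased version $(W \mid X_\alpha = 1)$ by decomposing $W - X_\alpha = V_\alpha + Z_\alpha$, with $V_\alpha := \sum_{\beta \in B_\alpha \setminus \{\alpha\}} X_\beta$ the ``local" dependent sum and $Z_\alpha := \sum_{\beta \notin B_\alpha} X_\beta$ the ``far" sum. Applying the triangle inequality to $g(W+1) - g(U_\alpha + 1)$ (with $U_\alpha$ having the law of $W - X_\alpha$ given $X_\alpha = 1$) and invoking $\|\Delta g\|_\infty \leq \min\{1,\lambda^{-1}\}$ on each summand produces three contributions that match the quantities $b_1$, $b_2$, $b_3$ of Theorem \ref{thm:stein_chen} line by line: $b_1$ comes from replacing $W+1$ by $W+1 - V_\alpha$, $b_2$ from its size-biased analogue, and $b_3$ from the possible residual dependence of $X_\alpha$ on the far collection. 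Choosing $B_\alpha$ to be the dependency set of $\alpha$ renders $X_\alpha$ independent of $\{X_\beta : \beta \notin B_\alpha\}$, so $b_3 = 0$ and we obtain $d_\TV(\mathcal L(W), \Pois(\lambda)) \leq \min\{1,\lambda^{-1}\}(b_1 + b_2)$.

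For the point process bound I would run an analogous argument with the Stein equation adapted to the immigration-death dynamics on the space of finite counting measures on $[0,1]$, whose stationary law is $\Pois(\pi)$ and whose infinitesimal generator acts as
\[
(\mathcal A f)(\xi) = \int_{[0,1]} \bigl( f(\xi + \delta_x) - f(\xi) \bigr)\, \pi(dx) + \int_{[0,1]} \bigl( f(\xi - \delta_x) - f(\xi) \bigr)\, \xi(dx).
\]
The Stein equation $(\mathcal A g_A)(\xi) = \I_A(\xi) - \Pr[\Pi \in A]$ (with $\Pi \sim \Pois(\pi)$) admits a solution whose discrete differences along single-point perturbations are bounded by $1$; the loss of the $\min\{1,\lambda^{-1}\}$ smoothing factor relative to the scalar case reflects the fact that a point process ``remembers" where each point sits, so the regularization effects that improve the scalar bound are unavailable at the process level. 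Plugging this Stein factor into the same dependency-set decomposition developed for $W$ yields $d_\TV(\mathcal L(\Xi), \Pois(\pi)) \leq b_1 + b_2 + b_3 = b_1 + b_2$. The main obstacle in both parts is establishing the sharp Stein factor estimates; the scalar case requires a careful induction on the explicit solution of the Stein recursion together with a Poisson coupling, while the point process case requires the corresponding analysis for the immigration-death semigroup.
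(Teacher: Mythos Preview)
The paper does not prove this theorem at all: it is quoted verbatim as a black-box result from \cite{arratia1989two} (Theorems~1 and~2 there) and used without further argument. Your outline is essentially the standard Stein--Chen derivation that appears in that reference and in the subsequent monograph \cite{barbour1992poisson}, so there is nothing to compare against in the present paper; your plan is the canonical one and would reproduce the cited result.
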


\begin{theorem}[\cite{barbour1992poisson}, Theorem 3.E] \label{thm:positively_related_lower_bound}
    Let $\left\{X_\alpha: \alpha \in I\right\}$ be positively related random variables, with $p_\alpha := \E[X_\alpha]$. Let $W := \sum_{\alpha \in I} X_\alpha$, and let $\mu := \E[W]$. Set
    \begin{align*}
        & \epsilon := \frac{\Var(W)}{\mu} - 1,& \gamma := \frac{\E\left[(W-\E[W])^4 \right]}{\mu} - 1,
    \end{align*}
    and also\footnote{We note that the expression for $\psi$ we provide is not exactly the same as that which was stated in \cite[Theorem 3.E]{barbour1992poisson}, but is instead the upper bound on $\psi$ that is given in \cite[Equation (2.12)]{barbour1992poisson}. This raises no issue, as this may only weaken the lower bound on the total variation distance $d_\TV\left(\mathcal L(W), \Pois(\mu) \right)$ that is guaranteed by the theorem.}
    \begin{align*}
        \psi := \left( 1 + \frac{3}{2}\max_{\alpha \in I} p_\alpha \right) \left(\frac{\gamma}{\mu \epsilon}\right)_+ + 3\epsilon + \left( \frac{15}{2} + \frac{7}{\mu} \right)\left( 1 + \epsilon \right)^2\frac{\max_{\alpha \in I}p_\alpha}{\epsilon}.
    \end{align*}
    If $\epsilon > 0$, then
    \begin{align*}
        d_\TV\left(\mathcal L(W), \Pois(\mu) \right) \geq \frac{\epsilon}{11+3\psi}.
    \end{align*}
\end{theorem}

\subsection{Martingale Concentration} \label{subsec:kim_vu_concentration}

We will establish each of the three cases in \cref{thm:Z_linear_forms} by first computing the expectations of all relevant random variables, then showing that these random variables are strongly concentrated about their expectations. In particular, when proving concentration bounds in \cref{sec:critical_decay,sec:slow_decay}, we will rely on the martingale machinery developed in \cite{kim2000concentration, vu2000new, vu2002concentration}.  Following the presentation of \cite{vu2002concentration}, we let $\mathcal F_n:=\sigma(a_0,\dots,a_{n})$ define the natural filtration on $\Omega_N$ induced by the coordinates. Now, for each $A \in \Omega$ and $n \in I_N$, we define the random variables
\begin{align*}
    C_n(0,A) & := \E\left[ |L(A \setminus \{n\} )^c| - |L(A)^c| \ \Big| \ \mathcal F_{n-1}\right], \\
    C_n(1,A) & := \E\left[ |L(A)^c| - |L(A \cup \{n\})^c| \ \Big| \ \mathcal F_{n-1} \right], \\
    V_n(A) & := \int_0^1 C_n^2(x,A) \ d\mu_n(x) = (1-p)C_n^2(0,A) + pC_n^2(1,A).
\end{align*}
From here, we define the random variables
\begin{align*}
    & C(A) := \mathop{\max_{n \in I_N}}_{x \in \{0,1\}} C_n(x,A),
    & V(A) := \sum_{n=0}^N V_n(A).
\end{align*}
With this setup in place, we extract the following result.
\begin{theorem}[\cite{vu2002concentration}, Lemma 3.1] \label{thm:vu_lemma}
    For any $\lambda, \mathbf{V}, \mathbf{C} > 0$ such that $\lambda \leq 4\mathbf{V}/\mathbf{C}^2$, we have that
    \begin{align*}
        \Prob\left( \left| |L(A)^c| - \E\left[ |L(A)^c| \right] \right| \geq \sqrt{\lambda \mathbf{V}} \right) \leq 2e^{-\lambda/4} & + \Prob(C(A) \geq \mathbf{C}) + \Prob(V(A) \geq \mathbf{V}).
    \end{align*}
\end{theorem}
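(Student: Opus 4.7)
The statement is essentially a direct specialization of \cite[Lemma 3.1]{vu2002concentration} to the product Bernoulli space $(\Omega_N, 2^{\Omega_N}, \Prob_N)$, so the plan is to introduce the appropriate Doob martingale, observe that the abstract quantities in Vu's lemma agree with the functions $C_n, V_n, C, V$ defined above, and then quote the inequality directly.

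First, consider the filtration $\mathcal F_{-1} \subset \mathcal F_0 \subset \cdots \subset \mathcal F_N$ on $\Omega_N$, where $\mathcal F_{-1}$ is the trivial $\sigma$-field and $\mathcal F_n := \sigma(a_0, \dots, a_n)$ for $n \geq 0$. Form the Doob martingale $M_n := \E[|L(A)^c| \mid \mathcal F_n]$, so that $M_{-1} = \E[|L(A)^c|]$ and $M_N = |L(A)^c|$. The increments $D_n := M_n - M_{n-1}$ can be written as the difference between the conditional expectation given $a_0, \dots, a_n$ and the conditional expectation given $a_0, \dots, a_{n-1}$, and by the very definition of $C_n(x,A)$ we have $|D_n| \leq C_n(a_n, A)$ pointwise, hence $|D_n| \leq C(A)$. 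Since $a_n \sim \Ber(p)$ is independent of $\mathcal F_{n-1}$, averaging over the two possibilities for $a_n$ shows that the conditional variance of $D_n$ given $\mathcal F_{n-1}$ equals exactly $V_n(A)$, and summing gives $\sum_{n=0}^N \E[D_n^2 \mid \mathcal F_{n-1}] = V(A)$.

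Second, apply the general martingale concentration inequality of \cite[Lemma 3.1]{vu2002concentration}: for any martingale $(M_n)$ whose increments admit a pointwise bound $C$ and whose predictable quadratic variation is bounded by $V$, and for any $\lambda \leq 4V/C^2$, one has
\begin{equation*}
\Prob\bigl(|M_N - M_{-1}| \geq \sqrt{\lambda V}\bigr) \leq 2e^{-\lambda/4}.
\end{equation*}
Since the deterministic bounds $C(A) \leq C$ and $V(A) \leq V$ need only hold on some event, one splits according to whether $\{C(A) \geq C\} \cup \{V(A) \geq V\}$ occurs, and on the complement the above Freedman-type bound applies to the stopped martingale; a union bound then yields the desired inequality.

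The only potential obstacle is bookkeeping: verifying that the quantity $V_n(A)$ as defined via the integral $\int_0^1 C_n^2(x,A) \, d\mu_n(x) = (1-p)C_n^2(0,A) + pC_n^2(1,A)$ genuinely equals $\E[D_n^2 \mid \mathcal F_{n-1}]$, and that the hypothesis $\lambda \leq 4V/C^2$ on the abstract lemma is consistent with our use. Both are straightforward, so once the martingale is set up, the theorem follows immediately from the cited lemma with no further work.
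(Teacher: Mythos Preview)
Your proposal is correct and, in fact, more detailed than what the paper does: the paper simply states this result as a direct citation of \cite[Lemma~3.1]{vu2002concentration}, remarking only that it is ``a specialized case of the lemma that we have referenced,'' without spelling out the Doob martingale or the identification of $C_n$, $V_n$ with the increment bound and predictable quadratic variation. Your sketch supplies exactly that missing bookkeeping, so there is nothing to correct.
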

We now perform a number of simplifications which will lead to expressions for $C(A)$ and $V(A)$ which are more amenable to analysis. We introduce, for each $A \in \Omega$ and $n \in I_N$, the random variable
\begin{equation} \label{eq:delta_function}
    \begin{aligned}
        \Delta_n(A):= \E\left[ \big| L\left(A \setminus \{n\}\right)^c \big| - \big| L\left(A \cup \{n\}\right)^c \big| \ \Big| \ \mathcal F_{n-1} \right].
    \end{aligned}
\end{equation}
By ``taking conditional expectations over $a_n$," which is independent of $\mathcal F_{n-1}$, we may express
\begin{align*}
    \E\left[ |L(A)^c| \ \Big| \ \mathcal F_{n-1} \right] = p \E\left[ |L(A \cup \{n\})^c| \ \Big| \ \mathcal F_{n-1} \right] + (1-p) \E\left[ |L(A \setminus \{n\})^c| \ \Big| \ \mathcal F_{n-1} \right].
\end{align*}
From here, we readily observe that
\begin{align*}
    & C_n(0,A) = p\Delta_n(A);
    & C_n(1,A) = (1-p)\Delta_n(A).
\end{align*}
It now readily follows that
\begin{align} \label{eq:C(A)_reformulation}
    C(A) = (1-p) \max_{n \in I_N} \Delta_n(A) \sim \max_{n \in I_N} \Delta_n(A).
\end{align}
Similarly, we have that
\begin{align} \label{eq:V(A)_reformulation}
    V(A) = p(1-p)\sum_{n=0}^N \left( \Delta_n(A) \right)^2 \sim p\sum_{n=0}^N \left( \Delta_n(A) \right)^2.
\end{align}
We will make use of the expressions \eqref{eq:C(A)_reformulation} and \eqref{eq:V(A)_reformulation} in \cref{sec:critical_decay,sec:slow_decay}.

\section{Moment Asymptotics} \label{sec:computations}

In this section, we isolate those computations which we will frequently refer to throughout the proofs of our main theorems. For $k \in [0, mN]$, we let $W_k$ denote the number of $L$-expressions $\Lambda \in \mathscr{E}_k$ such that $S(\Lambda) \subseteq A$. We recall that for each $\Lambda \in \mathscr{E}_k$, we let $X_\Lambda$ denote the Bernoulli random variable corresponding to the event that $S(\Lambda) \subseteq A$, so that 
\begin{align*}
    W_k = \sum_{\Lambda \in \mathscr{E}_k} X_\Lambda.
\end{align*}
It readily follows (e.g., via \eqref{eq:number_of_subsets_left_right}) that $W_k \stackrel{d}{=} W_{mN-k}$. Furthermore, under this correspondence, it holds at the process level that we have the equality in distribution
\begin{align} \label{eq:left_right_process_equivalence}
    \left( X_\Lambda : \Lambda \in \mathscr{E}_k \right) \stackrel{d}{=} \left( X_\Lambda : \Lambda \in \mathscr{E}_{mN-k}\right).
\end{align}
Notably, \eqref{eq:left_right_process_equivalence} allows us to extend computations over values $k \in [0, mN/2]$ to values $k \in [mN/2, mN]$. We now fix some $k \in [0, mN/2]$. We take $\mathscr{E}_k$ as our finite index set of interest. Up to constants, we bound the expectation, variance, and fourth central moment of $W_k$. Our bounds hold in the following regimes.
\begin{enumerate}
    \item All asymptotic upper bounds and asymptotic equivalences hold uniformly over $k \in (0, mN/2]$. 

    \item All asymptotic lower bounds hold uniformly over $k \in [k_0, mN/2]$ so long as we are not working in the exceptional setting $h = 2$, $L(x_1, x_2) = x_1 - x_2$, and $k = N$. In this exceptional setting, all asymptotic lower bounds hold uniformly over $k \in [k_0, mN/2)$.
\end{enumerate}
Asymptotic statements involving a condition on $k$ are to be understood as holding uniformly over such $k$ when working over a regime where the condition applies. We have that
\begin{align}
    \mu_k := \E[W_k] & = \sum_{\Lambda \in \mathscr{E}_k} p^{|S(\Lambda)|} = |\mathscr{E}_k| \cdot p^h \stackrel{\text{(\cref{lem:L_expression_tuples_lower_bounds,lem:L_expressions_tuples_upper_bounds})}}{\asymp} k^{h-1}p^h; \label{eq:basic_computation_mean} \\
    \Var(W_k) & = \mathop{\sum_{\left(\Lambda_1,\Lambda_2\right) \in \mathscr{E}_k^2}}_{\Lambda_1 \neq \Lambda_2} \Cov\left(X_{\Lambda_1},X_{\Lambda_2}\right) + \sum_{\Lambda \in \mathscr{E}_k} \Var(X_\Lambda) \label{eq:basic_computation_1} \\
    & \sim \mathop{\sum_{\left(\Lambda_1,\Lambda_2\right) \in \mathscr{E}_k^2, \ \Lambda_1 \neq \Lambda_2}}_{S(\Lambda_1) \cap S(\Lambda_2) \neq \emptyset} p^{|S(\Lambda_1) \cup S(\Lambda_2) |} - p^{|S(\Lambda_1)|+|S(\Lambda_2)|} + \E[W_k] \nonumber \\
    & \sim \sum_{\ell=h}^{2h-1} \mathop{\mathop{\sum_{\left(\Lambda_1,\Lambda_2\right) \in \mathscr{E}_k^2, \ \Lambda_1 \neq \Lambda_2}}_{S(\Lambda_1) \cap S(\Lambda_2) \neq \emptyset}}_{|S(\Lambda_1) \cup S(\Lambda_2)|=\ell} p^\ell + \E[W_k] \\
    & \begin{cases}
        \stackrel{\text{(\cref{lem:L_expression_tuples_lower_bounds})},\eqref{eq:basic_computation_mean}}{\gtrsim} k^{2h-3}p^{2h-1} + k^{h-1}p^h \\
        \stackrel{\text{(\cref{lem:L_expressions_tuples_upper_bounds})},\eqref{eq:basic_computation_mean}}{\lesssim} \sum_{\ell=h}^{2h-1} k^{\ell-2}p^\ell + k^{h-1}p^h 
    \end{cases} \nonumber \\
    & \begin{cases}
        \geq k^{2h-3}p^{2h-1}; \\
        \stackrel{(k \gtrsim 1/p)}{\lesssim} k^{2h-3}p^{2h-1} + k^{h-1}p^h \stackrel{\big( k \gtrsim (1/p)^{\frac{h-1}{h-2}} \big)}{\lesssim} k^{2h-3}p^{2h-1}.
    \end{cases} \label{eq:basic_computation_2}
\end{align}
We also bound fourth central moments via
\begin{align}
    & \E\left[(W_k-\E[W_k])^4\right] = \E\left[\left( \sum_{\Lambda \in \mathscr{E}_k} (X_\Lambda - p_\Lambda) \right)^4\right]  \leq \sum_{\left(\Lambda_1, \dots, \Lambda_4\right) \in \mathscr{E}_k^4} \left| \E\left[ (X_{\Lambda_1}-p_{\Lambda_1}) \cdots (X_{\Lambda_4}-p_{\Lambda_4}) \right] \right| \nonumber \\
    & \quad = \sum_{\ell=1}^{4h} \mathop{\sum_{\left(\Lambda_1, \dots, \Lambda_4\right) \in \mathscr{E}_k^4}}_{\left|\bigcup_{i=1}^4 S(\Lambda_i)\right| = \ell} \left| \E\left[ (X_{\Lambda_1}-p_{\Lambda_1}) \cdots (X_{\Lambda_4}-p_{\Lambda_4}) \right] \right| \nonumber \\
    & \lesssim \sum_{\ell=h}^{4h-2} \mathop{\mathop{\mathop{\sum_{\left(\Lambda_1, \dots, \Lambda_4\right) \in \mathscr{E}_k^4 \text{ distinct;}}}_{\left|\bigcup_{i=1}^4 S(\Lambda_i)\right| = \ell;}}_{S(\Lambda_i) \cap \left( \bigcup_{j\neq i} S(\Lambda_j) \right) \neq \emptyset}}_{\text{for all } i \in [4]} \left| \E\left[ (X_{\Lambda_1}-p_{\Lambda_1}) \cdots (X_{\Lambda_4}-p_{\Lambda_4}) \right] \right| \label{eq:basic_computation_3} \\
    & \qquad + \sum_{\ell=h}^{3h-1} \mathop{\mathop{\mathop{\sum_{\left(\Lambda_1, \Lambda_2, \Lambda_3\right) \in \mathscr{E}_k^3 \text{ distinct;}}}_{\left|\bigcup_{i=1}^3 S(\Lambda_i)\right| = \ell;}}_{S(\Lambda_i) \cap \left( \bigcup_{j\neq i} S(\Lambda_j) \right) \neq \emptyset}}_{\text{for all } i \in [3]} \left| \E\left[ (X_{\Lambda_1}-p_{\Lambda_1})^2 (X_{\Lambda_2}-p_{\Lambda_2}) (X_{\Lambda_3}-p_{\Lambda_3}) \right] \right| \label{eq:basic_computation_5} \\
    & \qquad + \sum_{\ell=h}^{2h-1} \mathop{\mathop{\mathop{\sum_{\left(\Lambda_1, \Lambda_2\right) \in \mathscr{E}_k^2 \text{ distinct;}}}_{\left|S(\Lambda_1) \cup S(\Lambda_2) \right| = \ell;}}_{S(\Lambda_1) \cap S(\Lambda_2) \neq \emptyset}} \left| \E\left[ (X_{\Lambda_1}-p_{\Lambda_1})^2 (X_{\Lambda_2}-p_{\Lambda_2})^2 \right] \right| + \left| \E\left[ (X_{\Lambda_1}-p_{\Lambda_1})^3 (X_{\Lambda_2}-p_{\Lambda_2})\right] \right| \label{eq:basic_computation_6} \\
    & \qquad + \sum_{\Lambda \in \mathscr{E}_k} \E\left[ (X_\Lambda-p_\Lambda)^4 \right] \nonumber \\
    & \stackrel{\text{(\cref{lem:L_expressions_tuples_upper_bounds})}}{\lesssim} \sum_{\ell=h}^{4h-2} k^{\ell-\lceil (\ell+1)/h \rceil} p^\ell + k^{h-1}p^h \lesssim \sum_{\ell=3h}^{4h-2} k^{\ell-4}p^\ell + \sum_{\ell=2h}^{3h-1} k^{\ell-3}p^\ell + \sum_{\ell=h}^{2h-1} k^{\ell-2}p^\ell + k^{h-1}p^h \nonumber \\
    & \stackrel{(k \gtrsim 1/p)}{\lesssim} k^{4h-6}p^{4h-2} + k^{3h-4}p^{3h-1} + k^{2h-3}p^{2h-1} + k^{h-1}p^h \stackrel{\big( k \gtrsim (1/p)^{\frac{h-1}{h-2}} \big)}{\lesssim} k^{4h-6}p^{4h-2}. \label{eq:basic_computation_4}
\end{align}
In particular, we observe that all summands in \eqref{eq:basic_computation_1} and \eqref{eq:basic_computation_3}-\eqref{eq:basic_computation_6} corresponding to tuples which fail to satisfy the former condition of \eqref{eq:L_expressions_tuples_upper_bounds} vanish, so we truncate the upper limits of the sums accordingly. 

\smallskip

For use in \cref{sec:poisson_convergence}, we also record the following computation for $h \geq 3$, which is observed by proceeding slightly more frugally in the variance computations starting from \eqref{eq:basic_computation_1}.
\begin{align*}
    & \Var(W_k) - \left( 1-p \right)\mu_k \stackrel{\eqref{eq:basic_computation_1}}{=} \mathop{\sum_{\left(\Lambda_1,\Lambda_2\right) \in \mathscr{E}_k^2}}_{\Lambda_1 \neq \Lambda_2} \Cov\left(X_{\Lambda_1},X_{\Lambda_2}\right) + \left[ \sum_{\Lambda \in \mathscr{E}_k} \Var(X_\Lambda) - \left( 1-p \right)\mu_k \right] \\
    & \quad = \mathop{\sum_{\left(\Lambda_1,\Lambda_2\right) \in \mathscr{E}_k^2}}_{\Lambda_1 \neq \Lambda_2} \Cov\left(X_{\Lambda_1},X_{\Lambda_2}\right) + \left[ \sum_{\Lambda \in \mathscr{E}_k} \left( 1 - p^{|S(\Lambda)|} \right) p^{|S(\Lambda)|} - \left( 1-p \right)\sum_{\Lambda \in \mathscr{E}_k} p^{|S(\Lambda)|} \right] \\
    & \quad \geq \mathop{\sum_{\left(\Lambda_1,\Lambda_2\right) \in \mathscr{E}_k^2}}_{\Lambda_1 \neq \Lambda_2} \Cov\left(X_{\Lambda_1},X_{\Lambda_2}\right) = \mathop{\sum_{\left(\Lambda_1,\Lambda_2\right) \in \mathscr{E}_k^2, \ \Lambda_1 \neq \Lambda_2}}_{S(\Lambda_1) \cap S(\Lambda_2) \neq \emptyset} p^{|S(\Lambda_1) \cup S(\Lambda_2) |} - p^{|S(\Lambda_1)|+|S(\Lambda_2)|} \\
    & \quad = \mathop{\sum_{\left(\Lambda_1,\Lambda_2\right) \in \mathscr{E}_k^2, \ \Lambda_1 \neq \Lambda_2}}_{S(\Lambda_1) \cap S(\Lambda_2) \neq \emptyset} p^{|S(\Lambda_1) \cup S(\Lambda_2)|} \left( 1 - p^{|S(\Lambda_1) \cap S(\Lambda_2)|} \right) \\
    & \quad \geq \mathop{\sum_{\left(\Lambda_1,\Lambda_2\right) \in \mathscr{E}_k^2, \ \Lambda_1 \neq \Lambda_2}}_{S(\Lambda_1) \cap S(\Lambda_2) \neq \emptyset} p^{|S(\Lambda_1) \cup S(\Lambda_2)|} \left( 1 - p \right) \gtrsim \mathop{\sum_{\left(\Lambda_1,\Lambda_2\right) \in \mathscr{E}_k^2, \ \Lambda_1 \neq \Lambda_2}}_{S(\Lambda_1) \cap S(\Lambda_2) \neq \emptyset} p^{|S(\Lambda_1) \cup S(\Lambda_2) |} \\
    & \quad = \sum_{\ell=h}^{2h-1} \mathop{\mathop{\sum_{\left(\Lambda_1,\Lambda_2\right) \in \mathscr{E}_k^2, \ \Lambda_1 \neq \Lambda_2}}_{S(\Lambda_1) \cap S(\Lambda_2) \neq \emptyset}}_{|S(\Lambda_1) \cup S(\Lambda_2)|=\ell} p^\ell \stackrel{\text{(\cref{lem:L_expression_tuples_lower_bounds})}}{\gtrsim} k^{2h-3}p^{2h-1}.
\end{align*}
Rearranging the above now yields
\begin{align} \label{eq:frugal_variance_bounds}
    \Var\left( W_k \right) - \mu_k \gtrsim k^{2h-3}p^{2h-1} - p \mu_k.
\end{align}
Finally, we bound the quantities that were introduced in \cref{thm:stein_chen}. Let $B_\Lambda \subseteq \mathscr{E}_k$ denote the dependency set of $\Lambda$ for every $\Lambda \in \mathscr{E}_k$, so that $\Lambda' \in B_\Lambda$ if and only if $S(\Lambda) \cap S(\Lambda') \neq \emptyset$. We may bound $b_1(k)$ via
\begin{align}
    b_1(k) & = \sum_{\Lambda \in \mathscr{E}_k} \sum_{\Lambda' \in B_\Lambda} p_\Lambda p_{\Lambda'} = \sum_{t=1}^h \mathop{\sum_{\left(\Lambda, \Lambda'\right) \in \mathscr{E}_k^2}}_{|S(\Lambda) \cap S(\Lambda')| = t} p_\Lambda p_{\Lambda'} = \sum_{\Lambda \in \mathscr{E}_k} p_\Lambda^2 + \sum_{t=1}^h \mathop{\sum_{\left(\Lambda, \Lambda'\right) \in \mathscr{E}_k^2, \ \Lambda \neq \Lambda'}}_{|S(\Lambda) \cap S(\Lambda')| = t} \Prob\left[X_\Lambda X_{\Lambda'} = 1\right] \nonumber \\
    & = |\mathscr{E}_k| \cdot p^{2h} + \sum_{t=1}^h \sum_{\ell=h}^{2h-t} \mathop{\mathop{\sum_{\left(\Lambda, \Lambda'\right) \in \mathscr{E}_k^2, \ \Lambda \neq \Lambda'}}_{S(\Lambda) \cap S(\Lambda') \neq \emptyset}}_{|S(\Lambda) \cup S(\Lambda')|=\ell} \Prob\left[X_\Lambda X_{\Lambda'} = 1\right] \stackrel{\text{(\cref{lem:L_expressions_tuples_upper_bounds})}}{\lesssim} k^{h-1}p^{2h} + \sum_{t=1}^h \sum_{\ell=h}^{2h-t} k^{\ell-\lceil (\ell+1)/h \rceil} p^\ell \nonumber \\
    & \lesssim k^{h-1}p^{2h} + \sum_{\ell=h}^{2h-1} k^{\ell-2}p^\ell \lesssim \sum_{\ell=h}^{2h-1} k^{\ell-2}p^\ell \stackrel{(k \gtrsim 1/p)}{\lesssim} k^{2h-3}p^{2h-1}. \label{eq:b_1(k)_bound} 
\end{align}
We may similarly bound $b_2(k)$ via
\begin{align}
    b_2(k) & = \sum_{\Lambda \in \mathscr{E}_k} \sum_{\Lambda' \in B_\Lambda \setminus \Lambda} \Prob\left[X_\Lambda X_{\Lambda'} = 1\right] = \sum_{t=1}^h \mathop{\sum_{\left(\Lambda, \Lambda'\right) \in \mathscr{E}_k^2, \ \Lambda \neq \Lambda'}}_{|S(\Lambda) \cap S(\Lambda')| = t} \Prob\left[X_\Lambda X_{\Lambda'} = 1\right] \nonumber \\
    & \stackrel{\eqref{eq:b_1(k)_bound}}{\lesssim} \sum_{\ell=h}^{2h-1} k^{\ell-2}p^\ell \stackrel{(k \gtrsim 1/p)}{\lesssim} k^{2h-3}p^{2h-1}. \label{eq:b_2(k)_bound}
\end{align}
Combining these two computations yields
\begin{align} \label{eq:AGG_1_not_dN}
    b_1(k) + b_2(k) \stackrel{(k \gtrsim 1/p)}{\lesssim} k^{2h-3}p^{2h-1}.
\end{align}

\section{Subcritical Decay} \label{sec:fast_decay}

We commence the proof of \cref{thm:Z_linear_forms}. Our representation machinery is built via the orbits $\mathscr{E}_k$. Accordingly, throughout \cref{sec:fast_decay,sec:critical_decay,sec:slow_decay}, we first work with the injective image
\begin{align*}
    L_{\neq}(A) := \left\{u_1a_1 + \cdots + u_ha_h : a_i \in A,\ a_i \text{ pairwise distinct} \right\}
\end{align*}
and we define its complement via
\begin{align*}
    L_{\neq}(A)^c = \left[ -dN, sN \right] \setminus L_{\neq}(A).
\end{align*}
In this section, we prove \cref{thm:Z_linear_forms_i}. Throughout this section, we assume that
\begin{align} \label{eq:fast_decay_assumption}
     Np^{\frac{h}{h-1}} \ll 1 \iff N^{h-1}p^h \ll 1.
\end{align}
Towards this end, we observe that
\begin{align*}
    |L_{\neq}(A)| \leq |L(A)| \leq |L_{\neq}(A)| + \binom{h}{2} |A|^{h-1},
\end{align*}
and since $|A| \sim Np$ by a standard Chernoff bound, so that $|A|^{h-1} \sim (Np)^{h-1}$, it suffices to show that
\begin{align} \label{eq:subcritical_nondistinct}
    |L_{\neq}(A)| \sim \frac{\left( N \cdot p(N) \right)^h}{|\Aut(L)|}.
\end{align}
We prove an upper bound and a lower bound for $|L_{\neq}(A)|$, each of which is asymptotically equivalent to $(Np)^h/|\Aut(L)|$. We begin with the upper bound. It holds that
\begin{align} \label{eq:fast_decay_ubd}
    |L_{\neq}(A)| \leq \sum_{k=0}^{mN} W_k = \frac{|A|(|A|-1) \cdots (|A|-h+1)}{|\Aut(L)|} \sim \frac{(Np)^h}{|\Aut(L)|},
\end{align}
where the asymptotic equivalence follows since $A$ is a $p$-random subset of $I_N$ whose expected size tends to infinity. On the other hand, we obtain a lower bound for $|L_{\neq}(A)|$ via
\begin{align*}
    |L_{\neq}(A)| \geq \sum_{k=1}^{mN} W_k - \sum_{k=1}^{mN} \binom{W_k}{2}.
\end{align*}
We count this latter term via, for each $k \in (0, mN]$ and $t \in [h]$, the number of ordered pairs $\left(\Lambda, \Lambda' \right) \in \mathscr{E}_k^2$ of such $L$-expressions satisfying
\begin{align} \label{eq:pair_condition}
    S(\Lambda) \cup S(\Lambda') \subseteq A; \qquad \qquad \Lambda \neq \Lambda'; \qquad \qquad |S(\Lambda) \cap S(\Lambda')| = t.
\end{align}
The expectation of the value that we subtract is
\begin{align}
    & \sum_{k=1}^{mN} \sum_{t=1}^h \E\big[ \text{num. ordered pairs } \left(\Lambda, \Lambda'\right) \in \mathscr{E}_k^2 \text{ satisfying } \eqref{eq:pair_condition} \big] \\
    & \qquad \lesssim \sum_{k=1}^{mN/2} \sum_{t=1}^h \mathop{\sum_{\left(\Lambda, \Lambda'\right) \in \mathscr{E}_k^2, \ \Lambda \neq \Lambda'}}_{|S(\Lambda) \cap S(\Lambda')| = t} \Prob[X_\Lambda X_{\Lambda'} = 1]  \stackrel{\eqref{eq:b_1(k)_bound}}{\lesssim} \sum_{k=1}^{mN/2} \sum_{t = 1}^h \sum_{\ell=h}^{2h-t} k^{\ell-\lceil (\ell+1)/h \rceil}p^\ell \nonumber \\
    & \qquad \lesssim \sum_{k=1}^{mN/2} \sum_{t = 1}^h \sum_{\ell=h}^{2h-t} N^{\ell-\lceil (\ell+1)/h \rceil}p^\ell \lesssim \sum_{k=1}^{mN/2} N^{2h-3}p^{2h-1} \nonumber \\
    & \qquad \lesssim N^{2h-2}p^{2h-1} \ll N^{2h-1}p^{2h} = (Np)^h \cdot N^{h-1}p^h \stackrel{\eqref{eq:fast_decay_assumption}}{\ll} (Np)^h. \label{eq:fast_decay_lower_bound_1}
\end{align}
It follows from \eqref{eq:fast_decay_lower_bound_1} and Markov's inequality that asymptotically almost surely, the number we subtract to obtain the lower bound is negligible compared to $(Np)^h$. We conclude that this lower bound for $|L_{\neq}(A)|$ is asymptotically almost surely equivalent to $(Np)^h/|\Aut(L)|$, establishing \cref{thm:Z_linear_forms_i}.

\section{Critical Decay} \label{sec:critical_decay}

We now prove \cref{thm:Z_linear_forms_ii}. Throughout this section, we assume that
\begin{align} \label{eq:critical_decay_assumption}
    N^{\frac{h-1}{h}}p = c \iff N^{h-1}p^h = c^h,
\end{align}
where $c > 0$ is a constant. Our basic strategy in \cref{sec:critical_decay,sec:slow_decay} is as follows. We will show that the number of ways that an element in $\left[-dN, sN\right]$ is generated in $L_{\neq}(A)$ converges in distribution to a Poisson, from which we can find asymptotic expressions for both the expected number of elements and the expected number of missing elements in $L_{\neq}(A)$. These are the relevant expressions from \cref{thm:Z_linear_forms}. We then transfer these expectation asymptotics to the original image $L(A)$ and its complement. Finally, we conclude that the relevant random variables are concentrated about their expectations, allowing us to promote these results on expectations to the latter two parts of \cref{thm:Z_linear_forms}.

\subsection{Setup}

Before beginning this endeavor, we introduce some expressions that we use throughout. Several arguments in \cref{sec:critical_decay,sec:slow_decay,sec:poisson_convergence} require us to treat values of $k$ near the fringes of the feasible interval $[0,mN]$ separately from values in the bulk, with the location of the separation between fringe and bulk depending on the assumptions that we work under. To keep the notation consistent, we fix a small collection of deterministic cutoff scales $K_i(N)$, which may be taken to be increasing in the index. The exact forms of these cutoff functions are unimportant; any choice satisfying the stated asymptotic inequalities is admissible. We initiate this sequence by defining $K_1: \N \to \R_{\geq 0}$ to be a function satisfying the conditions
\begin{align} \label{eq:K1_asymptotics}
    1 \ll 1/p \ll K_1(N) \ll (1/p)^{\frac{h}{h-1}} \stackrel{\eqref{eq:critical_decay_assumption}}{\lesssim} N.
\end{align}

Additionally, we let $-L: \Z^h \to \Z$ denote the linear form with coefficients $-u_h \geq \cdots \geq -u_1$. It readily follows that $L_{\neq}(A) = -(-L)_{\neq}(A)$, and that for any $k$,
\begin{align} \label{eq:left_right_symmetry}
    sN-k \notin L_{\neq}(A) \iff -sN+k \notin (-L)_{\neq}(A).
\end{align} 
In many forthcoming arguments, we will prove statements over values $-dN+k$ for $k \in [0, mN/2]$, then take advantage of the symmetry apparent in \eqref{eq:left_right_symmetry} to prove the corresponding statement over values $-dN+k$ for $k \in [mN/2, mN]$. 

Finally, in \cref{sec:critical_decay,sec:slow_decay}, we work with the function $\delta: \N \to \R_{\geq 0}$. The function $\delta(N)$ should be thought of as the worst-case multiplicative margin of error for the asymptotic equivalence given by \cref{lem:number_of_subsets_asymptotics}, with $K_1(N)$ playing the role of $K(N)$ therein:
\begin{align} \label{eq:delta(N)}
    \delta(N) := \max_{k \in [K_1(N), mN/2)} \left| \frac{\mu_k}{\Phi_L(k/N) N^{h-1}p^h} - 1 \right|.
\end{align}
Indeed, it holds uniformly over $k \in [K_1(N), mN/2)$ that
\begin{align} \label{eq:critical_gamma_1}
    \mu_k \stackrel{\eqref{eq:basic_computation_mean}}{=} |\mathscr{E}_k| \cdot p^h \stackrel{\text{(\cref{lem:number_of_subsets_asymptotics})}}{\sim} \Phi_L(k/N) \cdot N^{h-1}p^h.
\end{align}
Notably, the asymptotic equivalence in \eqref{eq:critical_gamma_1} follows from the facts that all summands corresponding to $\ell < h$ are of a lower order (e.g., see \cref{lem:L_expressions_tuples_upper_bounds}) and that $K_1(N) \gg 1/p$. We deduce that 
\begin{align*}
    \delta(N) \ll 1.
\end{align*}

\subsection{Expectation}

In this subsection, we compute $\E[|L_{\neq}(A)|]$ and $\E[|L_{\neq}(A)^c|]$. We begin with the following observation, which we invoke in the main computation.

\begin{lemma} \label{lem:critical_poisson_conv}
    Uniformly over all $k \in [K_1(N), mN/2)$,
    \begin{align} \label{eq:critical_approximation_error}
       \left|\Prob\left[-dN+k \notin L_{\neq}(A)\right] - e^{-c^h\Phi_L(k/N)} \right| \ll e^{-c^h\Phi_L(k/N)}.
    \end{align}
\end{lemma}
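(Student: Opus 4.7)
The event $\{-dN+k \notin L(A)\}$ is exactly $\{W_k = 0\}$ where $W_k := \sum_{\Lambda \in \mathscr{D}_k} X_\Lambda$. My plan is to apply the Stein--Chen bound (Theorem \ref{thm:stein_chen}) to approximate $\Prob[W_k = 0]$ by $e^{-\mu_k}$, and then to approximate $e^{-\mu_k}$ by $e^{-c^h \lambda_k}$ using the expectation asymptotic from Section \ref{sec:computations}. I will take each $B_\Lambda$ to be the dependency set $\{\Lambda' \in \mathscr{D}_k : S(\Lambda) \cap S(\Lambda') \neq \emptyset\}$, which forces $b_3 \equiv 0$, so the Stein--Chen error will be bounded by $b_1(k) + b_2(k)$.

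For the first approximation, I will use that under the critical scaling $p = cN^{-(h-1)/h}$ one has $N^{h-1}p^h = c^h$, and the bound \eqref{eq:AGG_1_not_dN} combined with $k = O(N)$ yields $b_1(k) + b_2(k) \lesssim N^{2h-3}p^{2h-1} + N^{h-2}p^{h-1} \asymp N^{-1/h}$ uniformly over $k \in [g(N), mN/2)$, so both terms tend to $0$. For the second approximation, the computations in Section \ref{sec:computations} combined with $N^{h-1}p^h = c^h$ give $\mu_k = (1 + O(\delta(N))) \, c^h \lambda_k$ with $\delta(N) \lll 1$, from which I will conclude
$$|e^{-\mu_k} - e^{-c^h\lambda_k}| \leq e^{-c^h\lambda_k} \cdot |e^{c^h\lambda_k - \mu_k} - 1| \lesssim \delta(N) \cdot c^h \lambda_k \cdot e^{-c^h\lambda_k}.$$

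The crucial final step, which promotes absolute $o(1)$ errors into the relative $o(e^{-c^h\lambda_k})$ error required by the statement, will be to observe that $\lambda_k$ is bounded above uniformly in $k$ and $N$: this is immediate from \eqref{eq:lambda_k} because $\IH_h$ is a bounded piecewise polynomial and the outer sum has a fixed number of terms. Hence $e^{-c^h\lambda_k}$ remains bounded below by a positive constant, both error bounds above are $\lll e^{-c^h\lambda_k}$ uniformly, and the triangle inequality will yield the claim. The main subtlety I anticipate is nothing more than keeping the uniformity of \eqref{eq:critical_gamma_1} clean; excluding the single point $k = mN/2$ sidesteps the factor-of-$2$ shift coming from $\mathfrak{R}_h^\rev(L)$ in the balanced case and avoids extra case analysis.
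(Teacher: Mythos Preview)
Your proposal is correct and follows essentially the same route as the paper: apply Stein--Chen with dependency sets to bound $|\Prob[W_k=0]-e^{-\mu_k}|$ by $b_1(k)+b_2(k)$, use $\delta(N)\lll 1$ to pass from $e^{-\mu_k}$ to $e^{-c^h\lambda_k}$, and then invoke the uniform boundedness of $\lambda_k$ (equivalently of $\IH_h$) to upgrade the absolute $o(1)$ errors to relative $o(e^{-c^h\lambda_k})$ errors. The paper multiplies through by $e^{c^h\lambda_k}$ at the outset and shows the result is $\lll 1$, whereas you defer this division to the end, but this is purely cosmetic.
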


\begin{proof}
By definition, $-dN+k \notin L_{\neq}(A)$ and $W_k = 0$ are the same event. It follows from \cref{thm:stein_chen} that uniformly over $k \in [K_1(N), mN/2)$,
\begin{align}
    & e^{c^h\Phi_L(k/N)}\left|\Prob\left[-dN+k \notin L_{\neq}(A) \right]- e^{-c^h\Phi_L(k/N)} \right| \nonumber \\
    & \qquad = e^{c^h\Phi_L(k/N)} \left|(\Prob\left[W_k = 0 \right] - e^{-\mu_k}) + (e^{-\mu_k} - e^{-c^h\Phi_L(k/N)}) \right| \nonumber \\
    & \qquad \leq \exp\left(\max_{k \in [K_1(N), mN/2]} c^h\Phi_L(k/N) \right) \left( b_1(k)+b_2(k) \right) + e^{c^h\Phi_L(k/N)}\left|e^{-\mu_k} - e^{-c^h\Phi_L(k/N)}\right| \nonumber \\
    & \qquad \stackrel{\eqref{eq:AGG_1_not_dN}}{\lesssim} k^{2h-3}p^{2h-1} + \left|e^{c^h\Phi_L(k/N)\big(1 - \frac{\mu_k}{c^h\Phi_L(k/N)}\big)} - 1\right| \lesssim N^{2h-3}p^{2h-1} + o(1) \label{eq:critical_poisson_conv} \\
    & \qquad \ll N^{2h-2}p^{2h} + N^{h-1}p^h + o(1) \stackrel{\eqref{eq:critical_decay_assumption}}{\lesssim} 1. \nonumber
\end{align}
In order, the claims in \eqref{eq:critical_poisson_conv} specifically follow from 
\begin{enumerate}
    \item the fact that $\IH_h(x)$ attains a maximum on $x \in [0, m/2]$ and that
    \begin{align*}
        k \geq K_1(N) \stackrel{\eqref{eq:K1_asymptotics}}{\gtrsim} 1/p,
    \end{align*}
    so that our invocation of \eqref{eq:AGG_1_not_dN} is justified;

    \item uniformly over $k \in [K_1(N), mN/2)$, it holds that 
    \begin{align*}
        \left|c^h\Phi_L(k/N)\left(1 - \frac{\mu_k}{c^h\Phi_L(k/N)}\right)\right| & \stackrel{\eqref{eq:critical_decay_assumption}}{=} \left|c^h\Phi_L(k/N)\left(1 - \frac{\mu_k}{\Phi_L(k/N) N^{h-1}p^h}\right)\right| \\
        & \stackrel{\eqref{eq:delta(N)}}{\leq} c^h\Phi_L(k/N) \delta(N) \ll 1.
    \end{align*}
\end{enumerate}
This proves the lemma.
\end{proof}

Equipped with \cref{lem:critical_poisson_conv}, we are ready to compute the expectations of interest.

\begin{proposition} \label{prop:critical_expectations}
    We have that
    \begin{align*}
        & \E\left[ |L(A)| \right] \sim \left(\sum_{i=1}^h |u_i| - 2\int_0^{m/2} e^{-c^h \Phi_L(x)} dx\right)N;
        & \E\left[ |L(A)^c| \right] \sim \left(2\int_0^{m/2} e^{-c^h \Phi_L(x)} dx \right) N.
    \end{align*}
\end{proposition}

\begin{proof}
    We first prove these expectation asymptotics for $|L_{\neq}(A)|$ and $|L_{\neq}(A)^c|$. It is clear that it suffices to prove one of the two corresponding statements. We prove the latter. We write
    \begin{align} \label{eq:critical_expectation_left_right}
        \E\left[ |L_{\neq}(A)^c| \right] = \E\left[ \left| L_{\neq}(A)^c \cap [-dN, -dN + mN/2] \right| \right] + \E\left[ \left| L_{\neq}(A)^c \cap (sN - mN/2, sN] \right| \right].
    \end{align}
    Using \eqref{eq:K1_asymptotics}, the first summand of \eqref{eq:critical_expectation_left_right} can be written as
    \begin{align} \label{eq:critical_fringes_expectation_1}
        o(N) + \E\left[ \left| L_{\neq}(A)^c \cap [-dN + K_1(N), -dN+mN/2] \right| \right].
    \end{align}
    We express the latter summand of \eqref{eq:critical_fringes_expectation_1} via
    \begin{align}
        & \sum_{k = K_1(N)}^{mN/2} \Prob\left[-dN+k \notin L_{\neq}(A) \right] \nonumber \\
        & \qquad = O(1) + \sum_{k = K_1(N)}^{mN/2} \Prob\left[-dN+k \notin L_{\neq}(A) \right] - e^{-c^h\Phi_L(k/N)} + e^{-c^h\Phi_L(k/N)} \nonumber \\
        & \qquad \stackrel{\text{(\cref{lem:critical_poisson_conv})}}{\sim} O(1) + \sum_{k=K_1(N)}^{mN/2} e^{-c^h\Phi_L(k/N)} \sim \int_{K_1(N)}^{mN/2} e^{-c^h\Phi_L(x/N)} dx \sim N\int_0^{m/2} e^{-c^h\Phi_L(x)} dx. \label{eq:critical_fringes_expectation_3}
    \end{align}
    Specifically, the final asymptotic equivalence of \eqref{eq:critical_fringes_expectation_3} follows since $K_1(N)/N \ll 1$ by \eqref{eq:K1_asymptotics}. It follows from \eqref{eq:critical_fringes_expectation_1} that \eqref{eq:critical_fringes_expectation_3} is asymptotically equivalent to the first summand of \eqref{eq:critical_expectation_left_right}. We now handle the second summand of \eqref{eq:critical_expectation_left_right}. Proceeding as in \eqref{eq:critical_fringes_expectation_1}, we may express this summand via
    \begin{equation} \label{eq:critical_fringes_expectation_5}
        \begin{aligned}
            & o(N) + \sum_{k=K_1(N)}^{mN/2} \Prob\left[ sN-k \notin L_{\neq}(A) \right] \stackrel{\eqref{eq:left_right_symmetry}}{=} o(N) + \sum_{k=K_1(N)}^{mN/2} \Prob\left[ -sN+k \notin (-L)_{\neq}(A) \right] \\
            & \qquad \sim N \int_0^{m/2} e^{-c^h\Phi_L(x)} dx.
        \end{aligned}
    \end{equation}
    Specifically, we have applied \eqref{eq:critical_fringes_expectation_3} in the context of the linear form $-L$. We conclude that
    \begin{align*} 
        \E\left[ |L_{\neq}(A)^c| \right] \stackrel{\eqref{eq:critical_expectation_left_right}, \eqref{eq:critical_fringes_expectation_3}, \eqref{eq:critical_fringes_expectation_5}}{\sim} 2N\int_0^{m/2} e^{-c^h\Phi_L(x)} dx, 
    \end{align*}
    thus establishing \cref{prop:critical_expectations} for $|L_{\neq}(A)|$ and $|L_{\neq}(A)^c|$. 

    \smallskip
    
    We now extend these results to $\E[|L(A)|]$ and $\E[|L(A)^c|]$. Towards this end, we let $\Pi_h$ denote the (finite) collection of set partitions $\pi = \{B_1, \dots, B_r\}$ of $[h]$ with $r<h$. For $\pi \in \Pi_h$ we define the \emph{collapsed linear form} $L^\pi$ via
    \begin{align*}
        L^\pi(y_1, \dots, y_r) := \sum_{j=1}^r \left( \sum_{i\in B_j} u_i \right) y_j.
    \end{align*}
    The collapsed linear form $L_{\pi}$ is in at most $r$ variables (after potentially deleting variables whose coefficient vanishes). Given a non-injective tuple $a = (a_1,\dots,a_h)\in A^h$, we let $\pi = \pi(a)$ be its collision pattern, i.e., the partition of $[h]$ defined by $i\sim_\pi j \iff a_i = a_j$. Writing $b_1, \dots, b_r$ for the distinct values assumed on the blocks $B_1, \dots, B_r$, we have that
    \begin{align*}
        L(a_1, \dots, a_h) = L^\pi(b_1, \dots, b_r).
    \end{align*}
    We thus have that
    \begin{align} \label{eq:critical_expectation_non_injective}
        L(A) \subseteq L_{\neq}(A) \cup \bigcup_{\pi \in \Pi_h} L_{\neq}^\pi(A) \implies |L(A) \setminus L_{\neq}(A)| \leq \sum_{\pi \in \Pi_h} |L^\pi(A)|.
    \end{align}
    It therefore follows that
    \begin{align*}
        \E[|L(A) \setminus L_{\neq}(A)|] \stackrel{\eqref{eq:critical_expectation_non_injective}}{\leq} \sum_{\pi \in \Pi_h} \E[|L^\pi(A)|] \stackrel{\eqref{eq:fast_decay_ubd}}{\lesssim} \E[|A|^{h-1}] \lesssim (Np)^{h-1} \stackrel{\eqref{eq:critical_decay_assumption}}{\ll} N,
    \end{align*}
    yielding \cref{prop:critical_expectations}.
\end{proof}

\subsection{Concentration}

We now show that the random variables $|L(A)|$ and $|L(A)^c|$ are strongly concentrated about their expectations. With \cref{prop:critical_expectations}, this proves \cref{thm:Z_linear_forms_ii}.

\begin{proposition} \label{prop:critical_concentration}
    If \eqref{eq:critical_decay_assumption} holds, then $|L(A)| \sim \E\left[ |L(A)| \right]$ and $|L(A)^c| \sim \E\left[ |L(A)^c| \right]$.
\end{proposition}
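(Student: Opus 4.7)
The plan is to apply Theorem~\ref{thm:vu_lemma} to $|L(A)^c|$, using Proposition~\ref{prop:critical_expectations} to identify the target scale. First I note that $|L(A)| + |L(A)^c| = mN+1$ is a deterministic constant (the size of $[-dN, sN] \cap \Z$), so the two asymptotics claimed in the proposition are equivalent, and by Proposition~\ref{prop:critical_expectations} we have $\E[|L(A)^c|] = \Theta(N)$. It therefore suffices to show $\bigl||L(A)^c| - \E[|L(A)^c|]\bigr| = o(N)$ asymptotically almost surely, and for this I will obtain uniform high-probability upper bounds on the quantities $C(A)$ and $V(A)$ from Section~\ref{sec:preliminaries}.

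The heart of the argument is an $L^\infty$ bound on $\Delta_n(A)$. Adding $n$ to a set $A'$ can only produce new elements of $L(A')$ through $L$-expressions whose ground set contains $n$, and there are at most $h \cdot (|A'|+1)^{h-1}$ such expressions, so
\[
\Delta_n(A) \;\leq\; h \cdot \E\bigl[(|A|+1)^{h-1} \;\big|\; a_0, \dots, a_{n-1}\bigr].
\]
Conditionally on $a_0, \dots, a_{n-1}$, the prefix count $|A \cap [0,n-1]|$ is determined and the suffix count $|A \cap [n, N]|$ is $\mathrm{Bin}(N-n+1, p)$. Since $Np = cN^{1/h} \to \infty$ under \eqref{eq:critical_decay_assumption}, a Chernoff bound together with a union bound over $n \in I_N$ shows that with probability $1 - o(1)$ we have $|A \cap [0,n-1]| \leq 2Np$ for all $n$ simultaneously. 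On this event the conditional moment above is $\lesssim (Np)^{h-1} \asymp N^{(h-1)/h}$ uniformly in $n$, whence by \eqref{eq:C(A)_reformulation} and \eqref{eq:V(A)_reformulation},
\[
C(A) \;\lesssim\; N^{\frac{h-1}{h}}, \qquad V(A) \;\lesssim\; p \cdot N \cdot N^{\frac{2(h-1)}{h}} \;\asymp\; N^{\frac{2h-1}{h}}.
\]

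With these choices of $C$ and $V$ (with implicit constants absorbed into the thresholds in Theorem~\ref{thm:vu_lemma}), I would take $\lambda := \log N$. The admissibility condition $\lambda \leq 4V/C^2 \asymp N^{1/h}$ then holds for large $N$; the deviation bound $\sqrt{\lambda V} \asymp \sqrt{\log N}\, N^{(2h-1)/(2h)}$ is $o(N)$, since $(2h-1)/(2h) < 1$; and $2e^{-\lambda/4} = 2N^{-1/4} = o(1)$. Theorem~\ref{thm:vu_lemma} then yields $|L(A)^c| = \E[|L(A)^c|] + o(N)$ asymptotically almost surely, which together with $\E[|L(A)^c|] = \Theta(N)$ gives the result.

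The main obstacle will be the uniform-in-$n$ control of $\Delta_n(A)$: because $\Delta_n(A)$ is itself a conditional expectation, one must rule out bad behavior of the prefix $|A \cap [0,n-1]|$ simultaneously for all $n \in I_N$ rather than just in expectation. This step is where the hypothesis $Np \ggg 1$ from \eqref{eq:p_assumptions} is essential, since only then does the Chernoff tail beat a union bound over $N+1$ indices. Every other ingredient is routine arithmetic with the exponents $(h-1)/h$ and $(2h-1)/h$ and a balancing choice of $\lambda$ against the pre-factors in Theorem~\ref{thm:vu_lemma}.
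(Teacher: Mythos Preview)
Your argument is correct and, for the critical regime, considerably simpler than the paper's. You use the crude global bound $\Delta_n(A) \leq h\,\E\bigl[(|A|+1)^{h-1}\mid a_0,\dots,a_{n-1}\bigr]$, which together with a single Chernoff bound on $|A|$ gives $C(A)\lesssim (Np)^{h-1}\asymp N^{(h-1)/h}$ and $V(A)\lesssim N^{(2h-1)/h}$ uniformly in $n$. The paper instead introduces a cutoff $\Tilde g(N)=(1/p)^{h/(h-1)}\log(1/p)$, splits $\Delta_n(A)$ into a ``middle'' piece $\Delta_{n,1}$ and a ``fringe'' piece $\Delta_{n,2}$, and bounds each separately; under \eqref{eq:critical_decay_assumption} the middle region is in fact empty and the fringe estimate reduces to something very close to yours, but with extra logarithmic factors. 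The reason for the paper's more elaborate route is stated in its footnote: the same decomposition is reused verbatim in Proposition~\ref{prop:slow_concentration} for the supercritical regime, where $\E[|L(A)^c|]\asymp (1/p)^{h/(h-1)}\lll N$ and your global bound $\sqrt{\lambda V}=o(N)$ would no longer suffice. So your approach buys cleanliness and slightly sharper constants in the critical case, while the paper's buys a single argument covering both Propositions~\ref{prop:critical_concentration} and~\ref{prop:slow_concentration}. One minor remark: the union bound over $n$ you mention is unnecessary, since $|A\cap[0,n-1]|\leq |A|$ for every $n$ and a single Chernoff bound on $|A|$ already handles all prefixes simultaneously.
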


\begin{proof}
    We fix $n \in I_N$. We define
    \begin{align} \label{eq:interval_I}
        \mathscr{I}_n := \left[-dN+\min\{n,N-n\}, sN-\min\{n,N-n\}\right].
    \end{align}
    We also define\footnote{The fact that we introduce $K_3$ before $K_2$, as well as the definition of $K_3$ and much of the following argument, will seem unnatural for the critical regime. Indeed, in the present proof, the interval $[-dN + K_3(N), sN - K_3(N)]$ will be empty for large $N$, and some claims follow vacuously. We do this for notational compatibility with the proof of \cref{prop:slow_concentration}, where we follow exactly the same argument to prove the concentration of $|L(A)^c|$ about its expectation with some modifications mentioned there.} the function $K_3: \N \to \N$ via
    \begin{align} \label{eq:g_tilde}
        K_3(N) := (1/p)^{\frac{h}{h-1}}\log(1/p).
    \end{align}
    We respectively think of
    \begin{align*}
        & \left[-dN+K_3(N), sN-K_3(N)\right];
        & \left[-dN+K_3(N), sN-K_3(N)\right]^c
    \end{align*}
    as the ``middle" and the ``fringes" of the interval $\left[-dN, sN\right]$. Since 
    \begin{align*}
        L\left(A \setminus \{n\}\right) \subseteq L\left(A \cup \{n\}\right) \implies L\left(A \setminus \{n\}\right)^c \supseteq L\left(A \cup \{n\}\right)^c
    \end{align*}
    it follows that the integrand in the conditional expectation \eqref{eq:delta_function} can be written as
    \begin{align} \label{eq:adding_n}
        \big| L\left(A \setminus \{n\}\right) ^c \big| - \big| L\left(A \cup \{n\}\right)^c \big| & = \left| L\left(A \setminus \{n\}\right)^c \setminus L\left(A \cup \{n\}\right)^c \right| = \left| L\left(A \cup \{n\}\right) \setminus L\left(A \setminus \{n\}\right) \right|.
    \end{align}
    The final expression in \eqref{eq:adding_n} can be understood as the number of new elements that are added to the image set $L(A)$ due to the inclusion of $n$. Any such new element certainly must use $n$ as a summand in any sum that generates it. Therefore, it holds that
    \begin{align*}
        L\left(A \cup \{n\}\right) \setminus L\left(A \setminus \{n\}\right) \subseteq \mathscr{I}_n.
    \end{align*}
    We may thus decompose
    \begin{align} \label{eq:delta_reexpressed}
        \Delta_n(A) & = \E\left[ \left| L\left(A \cup \{n\}\right) \setminus L\left(A \setminus \{n\}\right) \right| \ \Big| \ \mathcal F_{n-1} \right] \nonumber \\
        & = \E\left[ \left| \left(L\left(A \cup \{n\}\right) \setminus L\left(A \setminus \{n\}\right) \right) \cap (\mathscr{I}_n \cap [-dN+K_3(N), sN-K_3(N)]) \right| \ \Big| \ \mathcal F_{n-1} \right] \nonumber \\
        & \qquad + \E\left[ \left| \left(L\left(A \cup \{n\}\right) \setminus L\left(A \setminus \{n\}\right) \right) \cap (\mathscr{I}_n \setminus [-dN+K_3(N), sN-K_3(N)]) \right| \ \Big| \ \mathcal F_{n-1} \right] \nonumber \\
        & =: \Delta_{n,1}(A) + \Delta_{n,2}(A).
    \end{align}
    Using \eqref{eq:critical_decay_assumption}, we deduce that $K_3(N) \gg N$, so it certainly holds that $\Delta_{n,1}(A) = 0$. We now turn to showing that $\Delta_{n,2}(A)$ is modest with high probability. If $n \in [K_3(N), N - K_3(N)]$, then
    \begin{align*}
        \min\{n, N-n\} \geq K_3(N),
    \end{align*}
    from which it follows that
    \begin{align*}
        & \mathscr{I}_n \setminus [-dN+K_3(N), sN-K_3(N)] \\
        & \qquad \stackrel{\eqref{eq:interval_I}}{=} \left[-dN+\min\{n,N-n\}, sN-\min\{n,N-n\}\right] \setminus [-dN+K_3(N), sN-K_3(N)] = \emptyset.
    \end{align*}
    This implies that
    \begin{align} \label{eq:delta_2_middle_bound}
        \Delta_{n,2}(A) = 0 \text{ for all } n \in [K_3(N), N - K_3(N)].
    \end{align}
    Combining \eqref{eq:delta_2_middle_bound} with a union bound for $\Delta_{n,1}(A)$ yields that with probability $1-o(1)$,
    \begin{align} \label{eq:delta_bound_middle}
        \Delta_n(A) = \Delta_{n,1}(A) + \Delta_{n,2}(A) = 0 \text{ for all } n \in [K_3(N), N - K_3(N)].
    \end{align}
    We now consider values $n \notin [K_3(N), N - K_3(N)]$. Here, we observe that
    \begin{equation} \label{eq:candidate_summands}
        \begin{aligned}
            & \Big| \left(L\left(A \cup \{n\}\right) \setminus L\left(A \setminus \{n\}\right) \right) \cap (\mathscr{I}_n \setminus [-dN+K_3(N), sN-K_3(N)]) \Big| \\
            & \qquad \leq \left(\big|A \cap [0, K_3(N)]\big| + \big|A \cap [N-K_3(N), N]\big| \right)^{h-1} \\
            & \qquad \lesssim \big|A \cap [0, K_3(N)]\big|^{h-1} + \big|A \cap [N-K_3(N), N]\big|^{h-1}.
        \end{aligned}
    \end{equation}
    Indeed, including the element $n$ in $A \setminus \{n\}$ may generate no more than
    \begin{align*}
        \left(\big|A \cap [0, K_3(N)]\big| + \big|A \cap [N-K_3(N), N]\big| \right)^{h-1}
    \end{align*}
    new elements from $\mathscr{I}_n \setminus [-dN+K_3(N), sN-K_3(N)]$. As before, any new element in $L(A)$ resulting from including $n$ in $A \setminus \{n\}$ must use $n$ as a summand in any sum which generates it. Furthermore, the remaining $h-1$ summands in any such sum must lie in
    \begin{align*}
        (A \cap [0, K_3(N)]) \cup (A \cap [N-K_3(N), N]),
    \end{align*}
    since the resulting sum lies in $[-dN+K_3(N), sN-K_3(N)]$ otherwise. By standard Chernoff bounds,
    \begin{equation} \label{eq:chernoff_fringes}
        \begin{aligned}
            \Prob\left( |A \cap [0, K_3(N)]| \leq 2(1/p)^{\frac{1}{h-1}}\log(1/p) \right) & \geq 1 - \exp\left( - (1/p)^{\frac{1}{h-1}}\log(1/p)/2 \right); \\
            \Prob\left( |A \cap [N-K_3(N), N]| \leq 2(1/p)^{\frac{1}{h-1}}\log(1/p) \right) & \geq 1 - \exp\left( -(1/p)^{\frac{1}{h-1}}\log(1/p)/2 \right).
        \end{aligned}
    \end{equation}
    We now study the conditional expectations $\Delta_{n,2}(A)$. If $n \leq K_3(N)$, then with implicit constants uniform over such $n$, we have that
    \begin{align*}
        \Delta_{n,2}(A) & \stackrel{\eqref{eq:candidate_summands}}{\leq} \E\left[ \big|A \cap [0, K_3(N)]\big|^{h-1} + \big|A \cap [N-K_3(N), N]\big|^{h-1} \ \Big| \ \mathcal F_{n-1} \right] \\
        & \lesssim \E\left[ \big|A \cap [0, n-1]\big|^{h-1} + \big|A \cap [n, K_3(N)]\big|^{h-1} + \big|A \cap [N-K_3(N), N]\big|^{h-1} \ \Big| \ \mathcal F_{n-1} \right] \\
        & = \big|A \cap [0, n-1]\big|^{h-1} + \E\left[ \big|A \cap [n, K_3(N)]\big|^{h-1} + \big|A \cap [N-K_3(N), N]\big|^{h-1} \right] \\
        & \stackrel{\left( n \leq K_3(N) \right)}{\lesssim} \big|A \cap [0, K_3(N)]\big|^{h-1} + \E\left[ \big|\Bin\left( K_3(N)+1, p \right)\big|^{h-1} \right] \\
        & \stackrel{\eqref{eq:g_tilde}}{\lesssim} \big|A \cap [0, K_3(N)]\big|^{h-1} + \left( 1/p \right) \left(\log (1/p)\right)^{h-1}.
    \end{align*}
    A similar computation gives for $n \geq N - K_3(N)$ that,  with implicit constants uniform over such $n$,
    \begin{align*}
        \Delta_{n,2}(A) \lesssim \big|A \cap [0, K_3(N)]\big|^{h-1} + \big|A \cap [N-K_3(N), N]\big|^{h-1} + \left( 1/p \right) \left(\log (1/p)\right)^{h-1}.
    \end{align*}
    Altogether, we conclude that there exists a constant $C > 0$ such that the following holds. For any $n \notin [K_3(N), N - K_3(N)]$, it holds that
    \begin{align} \label{eq:first_sum_bound_1}
        \Delta_{n,2}(A) > C\left((1/p)^{\frac{1}{h-1}}\log(1/p) \right)^{h-1} = C(1/p)\left(\log (1/p)\right)^{h-1} \text{ w.p. } \leq 2\exp\left( -\frac{\log(1/p)}{2p^{\frac{1}{h-1}}} \right).
    \end{align}
    A union bound over $n \notin [K_3(N), N-K_3(N)]$ now implies that
    \begin{align*}
        & \Prob\left( \Delta_{n,2}(A) > C(1/p)\left(\log (1/p)\right)^{h-1} \text{ for some } n \notin [K_3(N), N-K_3(N)] \right) \\
        & \qquad \stackrel{\eqref{eq:first_sum_bound_1}}{\lesssim} K_3(N) \exp\left( - \frac{\log(1/p)}{2p^{\frac{1}{h-1}}} \right) \lesssim \frac{\log (1/p)}{p^{\frac{h}{h-1}}} \exp\left( - \frac{\log(1/p)}{2p^{\frac{1}{h-1}}} \right) \\
        & \qquad \leq \left( \frac{\log (1/p)}{p^{\frac{1}{h-1}}} \right)^h \exp\left( - \frac{\log(1/p)}{2p^{\frac{1}{h-1}}} \right) \ll 1.
    \end{align*}
    Therefore, with probability $1-o(1)$,
    \begin{align} \label{eq:delta_2_fringe_bound}
        \Delta_{n,2}(A) \lesssim (1/p) \left(\log (1/p)\right)^{h-1} \text{ for all } n \notin [K_3(N), N - K_3(N)].
    \end{align}
    Combining \eqref{eq:delta_2_fringe_bound} with a union bound for $\Delta_{n,1}(A)$ yields that with probability $1 - o(1)$,
    \begin{equation} \label{eq:delta_bound_fringes}
        \begin{aligned}
            \Delta_n(A) = \Delta_{n,1}(A) + \Delta_{n,2}(A) & \lesssim o(1) + (1/p) \left(\log (1/p)\right)^{h-1} \\
            & \lesssim (1/p) \left(\log (1/p)\right)^{h-1} \text{ for all } n \notin [K_3(N), N - K_3(N)].
        \end{aligned}
    \end{equation}
    Therefore, by combining \eqref{eq:delta_bound_middle} and \eqref{eq:delta_bound_fringes} under a union bound, we deduce that with probability $1-o(1)$,
    \begin{align} \label{eq:C(A)}
        C(A) \stackrel{\eqref{eq:C(A)_reformulation}}{\sim} \max_{n \in I_N} \Delta_n(A) \lesssim (1/p) \left(\log (1/p)\right)^{h-1}.
    \end{align}
    On this event with $1-o(1)$ probability, we also deduce from \eqref{eq:delta_bound_middle} and \eqref{eq:delta_bound_fringes} that
    \begin{equation} \label{eq:V(A)}
        \begin{aligned}
            V(A) & \stackrel{\eqref{eq:V(A)_reformulation}}{\sim} p\sum_{n=0}^N \left(\Delta_n(A)\right)^2 = p\sum_{n=K_3(N)}^{N-K_3(N)} \left(\Delta_n(A)\right)^2 + p\sum_{n \notin [K_3(N), N-K_3(N)]} \left(\Delta_n(A)\right)^2 \\
            & \lesssim p \cdot o(1) +  p \cdot K_3(N) \left( (1/p) \left(\log (1/p)\right)^{h-1} \right)^2 \\
            & \lesssim o(1) + \frac{\log (1/p)}{p^{\frac{1}{h-1}}} (1/p)^2 \left( \log (1/p) \right)^{2(h-1)} \lesssim (1/p)^{2+\frac{1}{h-1}} \left(\log (1/p)\right)^{2h-1}.
        \end{aligned}
    \end{equation}
    We finish the proof by invoking \cref{thm:vu_lemma}. Specifically, we take
    \begin{align} \label{eq:vu_parameters}
        \lambda \asymp \log (1/p), \qquad \mathbf{V} \asymp (1/p)^{2+\frac{1}{h-1}} \left(\log (1/p)\right)^{2h-1}, \qquad \mathbf{C} \asymp (1/p) \left( \log (1/p)\right)^{h-1},
    \end{align}
    and take the constant factors implicit in our expressions for $\mathbf{C}$ and $\mathbf{V}$ to agree with those implied in \eqref{eq:C(A)} and \eqref{eq:V(A)}, respectively. It follows from our choices in \eqref{eq:vu_parameters} that
    \begin{align}
        & \lambda \lesssim \log (1/p) \lesssim \mathbf{V}/\mathbf{C}^2 = (1/p)^{\frac{1}{h-1}}\log (1/p), \label{eq:lambda} \\
        & \sqrt{\lambda \mathbf{V}} \asymp \sqrt{\log (1/p) \cdot (1/p)^{2+\frac{1}{h-1}} \left(\log (1/p)\right)^{2h-1}} = (1/p)^{1+\frac{1}{2(h-1)}} \left(\log (1/p)\right)^h \ll (1/p)^{\frac{h}{h-1}}. \label{eq:root_lambda_V}
    \end{align}
    We now observe that 
    \begin{align*}
        N \stackrel{\eqref{eq:critical_decay_assumption}}{\asymp} (1/p)^{\frac{h}{h-1}} \stackrel{\eqref{eq:root_lambda_V}}{\implies} \sqrt{\lambda \mathbf{V}} \ll N.
    \end{align*}
    Since the right-hand sides of both expressions in \eqref{eq:critical_decay} are $\Omega(N)$, it follows from \cref{thm:vu_lemma} and \eqref{eq:root_lambda_V} that to prove the proposition, it suffices to show that
    \begin{align} \label{eq:vanishing_RHS}
        2e^{-\lambda/4} & + \Prob(C(A) \geq \mathbf{C}) + \Prob(V(A) \geq \mathbf{V}) \ll 1.
    \end{align}
    Since $\lambda \gg 1$, we have $2e^{-\lambda/4} \ll 1$. The latter two terms in the LHS of \eqref{eq:vanishing_RHS} vanish by \eqref{eq:C(A)} and \eqref{eq:V(A)}.
\end{proof}

\cref{prop:critical_expectations,prop:critical_concentration} together prove \cref{thm:Z_linear_forms_ii}.

\section{Supercritical Decay} \label{sec:slow_decay}

Finally, we prove \cref{thm:Z_linear_forms_iii}, completing the proof of \cref{thm:Z_linear_forms}. The $h=2$ case was done by \cite[Theorem 3.1(iii)]{hegarty2009almost}, so unless otherwise stated, we assume that $h \geq 3$ throughout this section. We also assume throughout this section that 
\begin{align} \label{eq:slow_decay_assumption}
    Np^{\frac{h}{h-1}} \gg 1 \iff N^{h-1}p^h \gg 1.
\end{align}

\subsection{Reduction to Fringe Elements} \label{subsec:reductions}

We set up expressions for the setting in which \eqref{eq:slow_decay_assumption} holds. We let $w: \N \to \R_{\geq 0}$ be a function (a \textit{width} parameter) satisfying
\begin{equation} \label{eq:w_asymptotics}
    \begin{aligned}
        & 1 \ll w(N) \ll \min\left\{Np^{\frac{h}{h-1}}, \left(1/\delta(N)\right)^{\frac{1}{h-1}}, (1/p)^{1/(h-1)^2} \right\}; \\
        & \exp\left(\frac{w(N)^{h-1}}{(h-1)! \cdot |\Aut(L)| \cdot \prod_{i=1}^h |u_i|} \right)w(N)^{2h-3}p^{\frac{1}{h-1}} \ll 1.
    \end{aligned}
\end{equation}
By taking $w$ to be sufficiently slowly growing, it is readily observed that such a function satisfying all of the conditions in \eqref{eq:w_asymptotics} exists. We define the cutoff function $K_2: \N \to \N$ via
\begin{align} \label{eq:K2}
    K_2(N) := (1/p)^{\frac{h}{h-1}} \cdot w(N).
\end{align}
From \eqref{eq:w_asymptotics} and \eqref{eq:K2}, it follows that $K_2(N) \ll N$. We respectively think of 
\begin{align*}
    & [-dN + K_2(N), sN - K_2(N)];
    & [-dN + K_2(N), sN - K_2(N)]^c
\end{align*}
as the ``middle" and the ``fringes" of the interval $[-dN, sN]$. In this subsection, we reduce the computation of $\E\left[ | L(A)^c | \right]$ to the fringes. We begin by deriving an asymptotic lower bound for the expected number of elements in $[-dN, sN]$ which are missing sums in $L(A)$.
\begin{lemma} \label{lem:missing_fringes}
    It holds that
    \begin{align*}
        \E \left[ | L_{\neq}(A)^c | \right] \gtrsim \left( 1/p \right)^{\frac{h}{h-1}}.
    \end{align*}
\end{lemma}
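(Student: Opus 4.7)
The plan is to isolate a fringe of $\Theta((1/p)^{h/(h-1)})$ candidate sums on which, for each target, $\mu_k$ is small enough that a first moment bound forces a constant-probability miss. By linearity,
\begin{align*}
\E[|L(A)^c|] = \sum_{k=0}^{mN} \Prob[-dN+k \notin L(A)],
\end{align*}
so it suffices to exhibit $\Theta((1/p)^{h/(h-1)})$ values of $k$ for which the summand is bounded below by a positive constant.

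To do this, I fix a sufficiently small constant $c > 0$ and restrict attention to $k \in [0, c(1/p)^{h/(h-1)}]$, a range that lies inside $[0, mN]$ by \eqref{eq:slow_decay_assumption}. The event $-dN+k \in L(A)$ coincides with $\{W_k \geq 1\}$, so Markov's inequality gives $\Prob[-dN+k \notin L(A)] \geq 1 - \mu_k$. The upper bound from Section \ref{sec:computations} reads $\mu_k \lesssim \sum_{\ell=1}^h k^{\ell-1} p^\ell = \sum_{\ell=1}^h p \cdot (kp)^{\ell-1}$, which is $\lesssim p$ when $kp \leq 1$ and $\lesssim k^{h-1} p^h$ when $kp \geq 1$. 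In the latter regime and on our fringe, $k^{h-1} p^h \leq c^{h-1}$, so choosing $c$ small (and then $N$ large so that $p$ is small) makes $\mu_k \leq 1/2$ uniformly across the fringe. Summing $\Prob[-dN+k \notin L(A)] \geq 1/2$ over the fringe yields the asserted $\E[|L(A)^c|] \gtrsim (1/p)^{h/(h-1)}$.

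There is no substantive obstacle here: the argument reduces to a first moment bound using the upper bound on $\mu_k$ that is already isolated in Section \ref{sec:computations}. The only care needed is to fix $c$ small enough that Markov's inequality has slack bounded away from one, and to verify via \eqref{eq:slow_decay_assumption} that the fringe fits inside the range $[0, mN]$ of valid offsets.
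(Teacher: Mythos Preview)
Your proof is correct but proceeds differently from the paper's. The paper works with a fringe interval $\mathcal I$ of length exactly $(1/p)^{h/(h-1)}$ and bounds the \emph{total} number of $L$-expressions whose $L$-evaluation lands in $\mathcal I$: via Chernoff bounds on $|A|$ restricted to short subintervals near $0$ and near $N$, this count is at most $(1+o(1))(1/p)^{h/(h-1)}/(\theta_L \prod_i|u_i|)$ with high probability, and since $\theta_L \prod_i |u_i| \geq 2$ for $h \geq 3$, a positive fraction of $\mathcal I$ is left uncovered. You instead argue pointwise, applying Markov's inequality with the upper bound on $\mu_k$ already recorded in Section~\ref{sec:computations} to force $\Prob[W_k=0]\ge 1/2$ on a shorter fringe $[0,c(1/p)^{h/(h-1)}]$. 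Your route is more elementary---no Chernoff step, no appeal to the inequality $\theta_L \prod_i |u_i| \geq 2$, and in particular no use of the standing assumption $h\ge 3$---while the paper's argument ties the fringe deficit more explicitly to the combinatorial constants of $L$.
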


\begin{proof}
    We restrict our attention to the subinterval
    \begin{align*}
        \mathcal J := \left[-dN, -dN + (1/p)^{\frac{h}{h-1}} \right] \stackrel{\eqref{eq:slow_decay_assumption}}{\subseteq} [-dN, sN].
    \end{align*}
    An element in $L_{\neq}(A)$ which is generated by adding a term greater than $(1/p)^{\frac{h}{h-1}}$ or subtracting a term less than $N-(1/p)^{\frac{h}{h-1}}$ fails to lie in $\mathcal J$. Thus, any element in $L_{\neq}(A) \cap \mathcal J$ must strictly add terms that are at most $(1/p)^{\frac{h}{h-1}}$ and subtract terms that are at least $N-(1/p)^{\frac{h}{h-1}}$. It follows that
    \begin{align} \label{eq:missing_fringes_1}
        |\mathcal J \cap L_{\neq}(A)| \leq \frac{1}{|\Aut(L)|} \cdot \prod_{i=1}^{h_{\text{pos}}} \left| A \cap \left[ 0, (1/p)^{\frac{h}{h-1}}/u_i \right] \right| \cdot \prod_{j = h_{\text{pos}}+1}^h \left| A \cap \left[ N- (1/p)^{\frac{h}{h-1}}/|u_j|, N \right] \right|.
    \end{align}
    It follows from standard Chernoff bounds that for all $i \in [h_{\text{pos}}]$ and $j \in \{h_{\text{pos}}+1, \dots, h\}$,
    \begin{align} \label{eq:missing_fringes_a}
        & \left| A \cap \left[ 0, (1/p)^{\frac{h}{h-1}}/u_i \right] \right| \sim (1/p)^{\frac{1}{h-1}}/u_i,
        & \left| A \cap \left[ N- (1/p)^{\frac{h}{h-1}}/|u_j|, N \right] \right| \sim (1/p)^{\frac{1}{h-1}}/|u_j|.
    \end{align}
    Thus, it holds with high probability that
    \begin{align} \label{eq:missing_fringes_bd}
         |\mathcal J \cap L_{\neq}(A)| \stackrel{\eqref{eq:missing_fringes_1}, \eqref{eq:missing_fringes_a}}{\leq} \frac{1+o(1)}{|\Aut(L)|} \cdot \prod_{i=1}^{h_{\text{pos}}} \left(\frac{(1/p)^{\frac{1}{h-1}}}{u_i} \right)  \cdot \prod_{j = h_{\text{pos}}+1}^h \left( \frac{(1/p)^{\frac{1}{h-1}}}{|u_j|} \right) = \frac{(1+o(1))(1/p)^{\frac{h}{h-1}}}{|\Aut(L)| \cdot \prod_{i=1}^h |u_i|}.
    \end{align}
    Therefore, it holds with high probability that 
    \begin{align} \label{eq:missing_fringes_4}
        |\mathcal J \cap L_{\neq}(A)^c| \stackrel{\eqref{eq:missing_fringes_bd}}{\geq} (1/p)^{\frac{h}{h-1}} - \frac{(1+o(1))(1/p)^{\frac{h}{h-1}}}{|\Aut(L)| \cdot \prod_{i=1}^h |u_i|} = \left[1 - \frac{1+o(1)}{|\Aut(L)| \cdot \prod_{i=1}^h |u_i|} \right](1/p)^{\frac{h}{h-1}} \gtrsim (1/p)^{\frac{h}{h-1}}.
    \end{align}
    In particular, the final claim of \eqref{eq:missing_fringes_4} follows since 
    \begin{align*}
        |\Aut(L)| \cdot \prod_{i=1}^h |u_i| \geq 2.
    \end{align*}
    Indeed, as $h \geq 3$, $|u_i| = 1$ for all $i \in [h]$ would imply that $|\Aut(L)| \geq 2$, since either $1$ or $-1$ appears as a coefficient at least twice. This establishes \cref{lem:missing_fringes}.
\end{proof}

We now prove \cref{lem:missing_middle}, which shows that the lower bound of \cref{lem:missing_fringes} dominates the number of elements in the middle of $[-dN, sN]$ that are missing from $L_{\neq}(A)$.

\begin{lemma} \label{lem:missing_middle}
    The expected number of elements in the interval $[-dN+K_2(N), sN-K_2(N)]$ that are missing from $L_{\neq}(A)$ satisfies
    \begin{align*}
        \E \left[ \left| L_{\neq}(A)^c \cap [-dN+K_2(N), sN-K_2(N)] \right| \right] \ll \left( 1/p \right)^{\frac{h}{h-1}}.
    \end{align*}
\end{lemma}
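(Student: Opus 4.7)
The plan is to express the middle expectation as a sum of probabilities and bound it via Poisson approximation. Writing
\[
\E\bigl[|L(A)^c \cap [(-d+\tau)N,(s-\tau)N]|\bigr] \;=\; \sum_{k = \tau N}^{(m-\tau)N}\Pr[W_k = 0]
\]
and exploiting the left–right symmetry recorded in \eqref{eq:left_right_symmetry}, it suffices to show $\sum_{k = \tau N}^{mN/2}\Pr[W_k = 0] \lll 1/p$ (with a possible isolated balanced correction at $k = mN/2$ handled separately). For each such $k$, I would apply Theorem \ref{thm:stein_chen} to $W_k = \sum_{\Lambda \in \mathscr{D}_k} X_\Lambda$, taking each $B_\Lambda$ to be the dependency set $\{\Lambda' : S(\Lambda) \cap S(\Lambda') \neq \emptyset\}$, which makes $b_3 \equiv 0$ and yields
\[
\Pr[W_k = 0] \;\leq\; e^{-\mu_k} \;+\; \frac{b_1(k) + b_2(k)}{\mu_k}.
\]

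Next I would invoke the Section \ref{sec:computations} estimates. Writing $D = (h-1)!\,\theta_L \prod_i |u_i|$, Lemma \ref{lem:number_of_subsets_asymptotics} and \eqref{eq:critical_gamma_1} give $\mu_k \gtrsim k^{h-1}p^h/D$ throughout the range, while \eqref{eq:AGG_1_not_dN} yields $(b_1(k)+b_2(k))/\mu_k \lesssim k^{h-2}p^{h-1} + 1/(kp)$. The $1/(kp)$ contribution sums trivially to $O(\log N)/p$-type error that is $\lll 1/p$ after the careful accounting permitted by $\tau N \ggg 1/p$. For the Poisson tail $\sum_k e^{-\mu_k}$, a Laplace-style integration, using the slope $\mu'(k) \asymp k^{h-2}p^h/D$ at the lower endpoint, gives a boundary-dominated bound of order
\[
\frac{D\,e^{-f(N)^{h-1}/D}}{(h-1)\,f(N)^{h-2}\,p^{h/(h-1)}},
\]
which is $\lll 1/p$ exactly when $f(N)^{h-1}/D$ is sufficiently large relative to $\log(1/p)$; this is enforced by the requirement $f \ggg 1$ coupled with the calibration in \eqref{eq:f_asymptotics}. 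The residual geometric sum of $k^{h-2}p^{h-1}$ over $[\tau N, mN/2]$ is bounded by matching it against the second clause of \eqref{eq:f_asymptotics}, where the peculiar exponent $1/[(h-1)(h-2)]$ of $p$ is precisely what arises from integrating this polynomial contribution up to the threshold $k \asymp p^{-(h-1)/(h-2)}$.

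The main obstacle is the delicate simultaneous calibration of $f$: it must grow fast enough that $\mu_{\tau N} = f(N)^{h-1}/D \to \infty$ sufficiently rapidly for the Poisson tail $\sum e^{-\mu_k}$ to be $\lll 1/p$, yet slowly enough that the Stein–Chen approximation error and the polynomial sum $\sum k^{h-2}p^{h-1}$ do not themselves overwhelm $1/p$. The conditions in \eqref{eq:f_asymptotics}, and in particular the second one with its exponent $1/[(h-1)(h-2)]$, are engineered to make both bounds hold for a suitably slowly growing $f$; verifying that the resulting constants and exponents close up — especially through the balanced case at $k = mN/2$ where $\mathscr{R}_L$ forces an extra factor of $2$ in the intensity — is where the careful bookkeeping lies. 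Once this calibration is secured, doubling via the symmetry \eqref{eq:left_right_symmetry} delivers the claim for the full middle interval.
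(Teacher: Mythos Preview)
Your approach has a genuine gap: the additive Stein--Chen error term cannot be summed over the middle interval. From \eqref{eq:AGG_1_not_dN} and the expectation computation in Section~\ref{sec:computations} you correctly get
\[
\frac{b_1(k)+b_2(k)}{\mu_k}\;\lesssim\; k^{h-2}p^{h-1} + \frac{1}{kp}
\]
uniformly for $k \in [\tau N, mN/2]$, but summing either piece over this range already exceeds $1/p$. The first piece gives $\sum_{k=\tau N}^{mN/2} k^{h-2}p^{h-1} \asymp N^{h-1}p^{h-1}$, and since we are in the supercritical regime \eqref{eq:slow_decay_assumption} we have $N^{h-1}p^{h} \ggg 1$, i.e.\ $N^{h-1}p^{h-1} \ggg 1/p$. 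The second piece gives $\sum_{k=\tau N}^{mN/2} \tfrac{1}{kp} \asymp \tfrac{1}{p}\log(1/\tau) \ggg 1/p$ because $\tau \lll 1$. Neither the second clause of \eqref{eq:f_asymptotics} nor the fact that $\tau N \ggg 1/p$ helps here: those conditions calibrate the Poisson approximation on the \emph{fringe} $[g(N),\tau N]$ in Lemma~\ref{lem:slow_poisson_conv}, not the middle.

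The paper sidesteps this entirely by replacing Stein--Chen with the extended Janson inequality, which yields the purely multiplicative bound
\[
\Pr[-dN+k \notin L(A)] \;\le\; \exp\!\bigl(-\mu_k^2/(2b_2(k))\bigr)\;\le\; e^{-Ckp}
\]
uniformly on $[\tau N, mN/2]$, with no additive error at all. This geometric tail then sums to $\lesssim \tfrac{1}{p}e^{-Cp\tau N} = \tfrac{1}{p}e^{-Cf(N)/(\,\cdot\,)} \lll 1/p$ simply because $f(N)\to\infty$; no delicate calibration against \eqref{eq:f_asymptotics} is needed for this step. The moral is that in the middle region $\mu_k$ is so large that any polynomial-in-$k$ additive error swamps the target, and one needs a correlation inequality that delivers an exponential bound outright.
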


\begin{proof}
    We fix $k \in [K_2(N), mN/2)$. It follows from \eqref{eq:w_asymptotics} and \eqref{eq:K2} that $k^{h-1}p^h \gtrsim 1$ uniformly over such $k$. We take $\mu_k$ and $b_2(k)$ as respectively defined in the proof of \cref{lem:slow_poisson_conv} and in \cref{sec:computations}. Uniformly over $k \in [K_2(N), mN/2)$, it holds that
    \begin{align} \label{eq:missing_middle_1}
        -\frac{\mu_k^2}{b_2(k)} \stackrel{\eqref{eq:basic_computation_mean}, \eqref{eq:b_2(k)_bound}}{\lesssim} -\frac{k^{2h-2}p^{2h}}{k^{2h-3}p^{2h-1}} = -kp.
    \end{align}
    There exists $\Lambda \in \mathscr{E}_k$ for which $S(\Lambda) \subseteq A$ if and only if $-dN+k \in L_{\neq}(A)$. Following \cref{thm:janson_ineqs}, we let $E_\Lambda$ denote the event that $S(\Lambda) \subseteq A$. Now, it holds for some constant $C > 0$ that
    \begin{align} \label{eq:missing_middle_3}
        \Prob\left[-dN+k \notin L_{\neq}(A)\right] = \Prob\left[ \bigwedge_{\Lambda \in \mathscr{E}_k} \overline{E_\Lambda} \right] \stackrel{\eqref{eq:janson_main_bound}}{\leq} e^{-\frac{\mu_k^2}{2b_2(k)}} + e^{-\mu_k/2} \stackrel{\eqref{eq:basic_computation_mean},\eqref{eq:missing_middle_1}}{\leq} e^{-Ckp} + e^{-Ck^{h-1}p^h},
    \end{align}
    where \eqref{eq:missing_middle_3} holds uniformly over $k \in [K_2(N), mN/2)$. We deduce that
    \begin{align}
        & \E \left[ \left| L_{\neq}(A)^c \cap [-dN+K_2(N), (s-m/2)N) \right| \right] = \sum_{k = K_2(N)}^{mN/2-1} \Prob\left[-dN+k \notin L_{\neq}(A)\right] \nonumber \\
        & \qquad \leq \sum_{k = K_2(N)}^{mN/2-1} e^{-Ckp} + e^{-Ck^{h-1}p^h/2} \leq \int_{K_2(N)-1}^\infty e^{-Cxp} \ dx + \int_{K_2(N)-1}^\infty e^{-Cx^{h-1}p^h} \ dx \nonumber \\
        & \qquad \leq \int_{K_2(N)-1}^\infty e^{-Cxp} \ dx + \left( 1/p \right)^{h/(h-1)} \int_{(K_2(N)-1)p^{h/(h-1)}}^\infty e^{-Cx^{h-1}} \ dx \nonumber \\
        & \qquad = \frac{1}{Cp}\exp\left( -Cp(K_2(N)-1)\right) + \left( 1/p \right)^{h/(h-1)} \int_{w(N) - p^{h/(h-1)}}^\infty e^{-Cx^{h-1}} \ dx \nonumber \\
        & \qquad \stackrel{\eqref{eq:w_asymptotics}, \eqref{eq:K2}}{\lesssim} (1/p) \exp\left( C(p - w(N))\right) + \left( 1/p \right)^{h/(h-1)} o(1) \stackrel{\eqref{eq:w_asymptotics}}{\ll} \left( 1/p \right)^{\frac{h}{h-1}}.  \label{eq:missing_middle_4}
    \end{align}
    It now follows from \eqref{eq:left_right_symmetry} that 
    \begin{align}
        \E \left[ \left| L_{\neq}(A)^c \cap ((-d+m/2)N, sN - K_2(N)] \right| \right] = \sum_{k \in [K_2(N), mN/2)} \Prob[sN-k \notin L_{\neq}(A)] \nonumber \\
        \stackrel{\eqref{eq:left_right_symmetry}}{=} \sum_{k \in [K_2(N), mN/2)} \Prob[-sN+k \notin (-L)_{\neq}(A)] \stackrel{\eqref{eq:missing_middle_4}}{\ll} \left( 1/p \right)^{\frac{h}{h-1}}, \label{eq:missing_middle_5}
    \end{align}
    where we have invoked \eqref{eq:missing_middle_4} with respect to $-L$ to observe \eqref{eq:missing_middle_5}. We conclude that
    \begin{align*}
        & \E \left[ \left| L_{\neq}(A)^c \cap [-dN+K_2(N), sN-K_2(N)] \right| \right] \\
        & \quad = \E \left[ \left| L_{\neq}(A)^c \cap [-dN+K_2(N), (s-m/2)N] \right| \right] + \E \left[ \left| L_{\neq}(A)^c \cap ((-d+m/2)N, sN - K_2(N)] \right| \right] \\
        & \quad \stackrel{\eqref{eq:missing_middle_4},\eqref{eq:missing_middle_5}}{\ll} \left( 1/p \right)^{\frac{h}{h-1}}.
    \end{align*}
    This yields the desired statement.
\end{proof}

Altogether, we conclude that
\begin{equation} \label{eq:reduction_to_fringes_obj_2}
    \begin{aligned}
        & \E\left[ | L_{\neq}(A)^c | \right] \\
        & = \E \left[ \left| L_{\neq}(A)^c \cap [-dN+K_2(N), sN-K_2(N)] \right| \right] + \E \left[ \left| L_{\neq}(A)^c \cap [-dN+K_2(N), sN-K_2(N)]^c \right| \right] \\
        & \stackrel{\text{(\cref{lem:missing_fringes,lem:missing_middle})}}{\sim} \E\left[ \left| L_{\neq}(A)^c \cap [-dN+K_2(N), sN-K_2(N)]^c \right| \right],
    \end{aligned}
\end{equation}
which provides the desired reduction.

\begin{remark} \label{rmk:forbidden_summand}
    It is straightforward to adapt the exponentially decaying bound \eqref{eq:missing_middle_3} of \cref{lem:missing_middle} if we were to enforce the condition that some particular value $n \in I_N$ cannot be in any subset of $\mathscr{E}_k$. Indeed, the asymptotic claim implicit in \eqref{eq:missing_middle_1} would still hold uniformly over $k \in [K_2(N), mN/2]$. This is since for such $k$, the number of $h$-tuples $(a_1, \dots, a_h)$ with at least one instance of $n$ such that 
    \begin{align*}
        L(a_1, \dots, a_h) = -dN+k
    \end{align*}
    is $O(k^{h-2})$, so $\mu_k$ would remain of the same order. More specifically, by slightly adapting the proof of \cref{lem:missing_middle}, we can show that there exists a constant $C > 0$ (independent of $n$) for which it holds for all $k \in \left[(1/p)^{\frac{h}{h-1}}, mN/2 \right]$ that
    \begin{gather*} 
        \Prob\left[-dN + k \notin L\left(A \setminus \{n\}\right)\right] \leq \Prob\left[-dN + k \notin L_{\neq} \left(A \setminus \{n\}\right)\right] \leq e^{-Ckp} + e^{-Ck^{h-1}p^h}; \\
        \Prob\left[sN - k \notin L\left(A \setminus \{n\}\right)\right] \leq \Prob\left[sN - k \notin L_{\neq}\left(A \setminus \{n\}\right)\right] \leq e^{-Ckp} + e^{-Ck^{h-1}p^h}.
    \end{gather*}
    We make use of this remark in the proof of \cref{prop:slow_concentration}.
\end{remark}

\subsection{Expectation} \label{subsec:slow_expectation}

In this subsection, we compute $\E\left[ | L(A)^c | \right]$. We begin with the following observation, which is the analogue of \cref{lem:critical_poisson_conv} for this regime. Notably, \cref{lem:slow_poisson_conv} and its proof hold without modification for the $h=2$ setting. We make use of this observation later on in the proof of \cref{prop:slow_fringes_expectation}.

\begin{lemma} \label{lem:slow_poisson_conv}
    Uniformly over all $k \in [K_1(N), K_2(N)]$, it holds that
    \begin{align*}
       \left|\Prob\left[-dN+k \notin L_{\neq}(A)\right] - e^{-\Phi_L(k/N)N^{h-1}p^h} \right| \ll e^{-\Phi_L(k/N)N^{h-1}p^h}.
    \end{align*}
\end{lemma}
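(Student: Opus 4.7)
The plan is to follow the template of Lemma \ref{lem:critical_poisson_conv}. Let $W_k = \sum_{\Lambda \in \mathscr{D}_k} X_\Lambda$ so that $\{-dN+k \notin L(A)\} = \{W_k = 0\}$, and apply the triangle inequality
\begin{align*}
    \left|\Prob[W_k = 0] - e^{-\lambda_k N^{h-1}p^h} \right| \leq \left|\Prob[W_k = 0] - e^{-\mu_k} \right| + \left|e^{-\mu_k} - e^{-\lambda_k N^{h-1}p^h} \right|.
\end{align*}
I will show each summand is $\lll e^{-\lambda_k N^{h-1}p^h}$ uniformly on $k \in [g(N), \tau N]$. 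A preliminary simplification: since $k/N \leq \tau \lll 1$, the only nonvanishing term in the numerator defining $\lambda_k$ in \eqref{eq:lambda_k} corresponds to $t_1 = \cdots = t_h = 0$, for which $\IH_h(k/N) = (k/N)^{h-1}/(h-1)!$. Thus
\begin{align*}
    \lambda_k N^{h-1}p^h = \frac{k^{h-1}p^h}{(h-1)!\, \theta_L \prod_{i=1}^h |u_i|} \leq \frac{f(N)^{h-1}}{(h-1)!\, \theta_L \prod_{i=1}^h |u_i|}
\end{align*}
on the range of interest.

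For the second summand, the asymptotic identity \eqref{eq:critical_gamma_1} remains valid here (the derivation only required $k \in [g(N), mN/2]$ and $k \gtrsim 1/p$, both of which hold), so $\mu_k = \lambda_k N^{h-1}p^h(1 + O(\delta(N)))$. Then
\begin{align*}
    \left|e^{-\mu_k} - e^{-\lambda_k N^{h-1}p^h}\right| = e^{-\lambda_k N^{h-1}p^h}\left|e^{O(\delta(N)\, \lambda_k N^{h-1}p^h)} - 1 \right|,
\end{align*}
and the exponent inside is $O(\delta(N)\, f(N)^{h-1})$, which is $o(1)$ by the first line of \eqref{eq:f_asymptotics} (namely $f(N) \lll (1/\delta(N))^{1/(h-1)}$). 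Hence this term is $o(e^{-\lambda_k N^{h-1}p^h})$.

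For the first summand, I would apply Theorem \ref{thm:stein_chen} with $B_\Lambda$ equal to the dependency set of $\Lambda$, so that $b_3 \equiv 0$ and, by \eqref{eq:AGG_1_not_dN} with $k \leq \tau N = f(N)/p^{h/(h-1)}$,
\begin{align*}
    b_1(k) + b_2(k) \lesssim k^{2h-3}p^{2h-1} + k^{h-2}p^{h-1} \lesssim f(N)^{2h-3}p^{1/(h-1)}.
\end{align*}
Since $h \geq 3$ gives $p^{1/(h-1)} \leq p^{1/((h-1)(h-2))}$, multiplying through by $e^{\lambda_k N^{h-1}p^h}$ and combining with the bound on $\lambda_k N^{h-1}p^h$ above yields
\begin{align*}
    \left|\Prob[W_k = 0] - e^{-\mu_k}\right| e^{\lambda_k N^{h-1}p^h} \lesssim f(N)^{2h-3} p^{1/((h-1)(h-2))} \exp\!\left(\frac{f(N)^{h-1}}{(h-1)!\, \theta_L \prod_{i=1}^h |u_i|}\right) \lll 1
\end{align*}
by the second line of \eqref{eq:f_asymptotics}, which is engineered precisely for this estimate.

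The main obstacle is the large prefactor $e^{\lambda_k N^{h-1}p^h}$: in contrast to the critical regime, $\mu_k$ can grow polynomially in $f(N)$ here, so a naive Stein--Chen bound on $|\Prob[W_k = 0] - e^{-\mu_k}|$ would be too weak to survive multiplication by $e^{\lambda_k N^{h-1}p^h}$. The twin restrictions on $f$ imposed in \eqref{eq:f_asymptotics} are exactly what ensure that the polynomial decay of $b_1(k) + b_2(k)$ overwhelms this exponential blow-up while simultaneously keeping the switch from $\mu_k$ to $\lambda_k N^{h-1}p^h$ in the exponent negligible; once those are in hand the rest of the proof is routine.
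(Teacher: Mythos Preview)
Your proposal is correct and follows essentially the same route as the paper: split via the triangle inequality, control $|\Prob[W_k=0]-e^{-\mu_k}|$ by Stein--Chen with the bound \eqref{eq:AGG_1_not_dN}, and control $|e^{-\mu_k}-e^{-\lambda_k N^{h-1}p^h}|$ using $\delta(N)f(N)^{h-1}\lll 1$; both halves are then absorbed by the two conditions in \eqref{eq:f_asymptotics}. The only cosmetic difference is that the paper weakens $k^{h-2}p^{h-1}$ to $k^{h-1}p^h$ before substituting $k\le \tau N$, whereas you substitute directly into \eqref{eq:AGG_1_not_dN} and then use $p^{1/(h-1)}\le p^{1/((h-1)(h-2))}$; both computations land on the same estimate.
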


\begin{proof}
We proceed as in the proof of \cref{lem:critical_poisson_conv}. Uniformly over $k \in [ K_1(N), K_2(N) ]$,
\begin{align} \label{eq:gamma_1}
    \mu_k \sim |\mathscr{E}_k| \cdot p^h \sim \Phi_L(k/N) N^{h-1}p^h \stackrel{\eqref{eq:critical_regime_exponential_term}, (K_2(N) \ll N))}{=} \frac{p^hk^{h-1}}{(h-1)! \cdot |\Aut(L)| \cdot \prod_{i=1}^h |u_i|}. 
\end{align}
Here, \eqref{eq:gamma_1} follows as in \eqref{eq:critical_gamma_1}. Furthermore, it holds uniformly over $k \in [K_1(N), K_2(N)]$ that
\begin{equation} \label{eq:margin_bd}
    \begin{aligned}
        & \left| \Phi_L(k/N)N^{h-1}p^h\left(1 - \frac{\mu_k}{\Phi_L(k/N)N^{h-1}p^h}\right) \right| \stackrel{\eqref{eq:delta(N)}, \eqref{eq:gamma_1}}{\leq} \frac{p^hk^{h-1}}{(h-1)! \cdot |\Aut(L)| \cdot \prod_{i=1}^h |u_i|} \cdot \delta(N) \\
        & \qquad \stackrel{(k \leq K_2(N))}{\leq} \frac{p^hK_2(N)^{h-1}}{(h-1)! \cdot |\Aut(L)| \cdot \prod_{i=1}^h |u_i|} \cdot \delta(N) \stackrel{\eqref{eq:K2}}{\leq} w(N)^{h-1} \delta(N) \stackrel{\eqref{eq:w_asymptotics}}{\ll} 1.
    \end{aligned}
\end{equation}
It now follows that uniformly over $k \in [K_1(N), K_2(N)]$,
\begin{align*}
    & e^{\Phi_L(k/N)N^{h-1}p^h}\left|\Prob\left[-dN+k \notin L_{\neq}(A) \right] - e^{-\Phi_L(k/N)N^{h-1}p^h} \right| \\
    & = e^{\Phi_L(k/N)N^{h-1}p^h} \left|(\Prob\left[W_k = 0 \right] - e^{-\mu_k}) + (e^{-\mu_k} - e^{-\Phi_L(k/N)N^{h-1}p^h}) \right| \\
    & \stackrel{\text{(\cref{thm:stein_chen})}}{\leq} \exp\left(\frac{p^hk^{h-1}}{(h-1)! \cdot |\Aut(L)| \cdot \prod_{i=1}^h |u_i|} \right) \left(b_1(k)+b_2(k)\right) \\
    & \qquad \qquad \qquad \qquad + e^{\Phi_L(k/N)N^{h-1}p^h}\left|e^{-\mu_k} - e^{-\Phi_L(k/N)N^{h-1}p^h}\right| \\
    & \stackrel{\eqref{eq:AGG_1_not_dN}, \eqref{eq:K1_asymptotics}}{\lesssim} \exp\left(\frac{\left(K_2(N) p^{\frac{h}{h-1}} \right)^{h-1}}{(h-1)! \cdot |\Aut(L)| \cdot \prod_{i=1}^h |u_i|} \right) k^{2h-3}p^{2h-1} + \left|e^{\Phi_L(k/N)N^{h-1}p^h\left(1 - \frac{\mu_k}{\Phi_L(k/N)N^{h-1}p^h}\right)} - 1\right| \\
    & \stackrel{\eqref{eq:margin_bd}, (k \leq K_2(N))}{=} \exp\left(\frac{w(N)^{h-1}}{(h-1)! \cdot |\Aut(L)| \cdot \prod_{i=1}^h |u_i|} \right) K_2(N)^{2h-3}p^{2h-1} + o(1) \\
    & \stackrel{\eqref{eq:K2}}{=} \exp\left(\frac{w(N)^{h-1}}{(h-1)! \cdot |\Aut(L)| \cdot \prod_{i=1}^h |u_i|} \right) \left( w(N) p^{-\frac{h}{h-1}} \right)^{2h-3} p^{2h-1} + o(1) \\
    & \lesssim \exp\left(\frac{w(N)^{h-1}}{(h-1)! \cdot |\Aut(L)| \cdot \prod_{i=1}^h |u_i|} \right) w(N)^{2h-3} p^{\frac{1}{h-1}} + o(1) \stackrel{\eqref{eq:w_asymptotics}}{\ll} 1.
\end{align*}
This proves the lemma.
\end{proof}

Equipped with \cref{lem:slow_poisson_conv}, we are ready to compute the expectation of interest.
\begin{proposition} \label{prop:slow_fringes_expectation}
    We have that
    \begin{align*} 
        \E\left[ |L(A)^c| \right] \sim \frac{2 \cdot \Gamma\left( \frac{1}{h-1} \right) \sqrt[h-1]{(h-1)! \cdot |\Aut(L)| \cdot \prod_{i=1}^{h} |u_i|}}{(h-1) \cdot p(N)^{\frac{h}{h-1}}}.
    \end{align*}
\end{proposition}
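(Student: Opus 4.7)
The plan is to leverage the reduction \eqref{eq:reduction_to_fringes_obj_2} together with the left–right symmetry \eqref{eq:left_right_symmetry}, and then carry out an integral asymptotic evaluation on a single fringe. Concretely, since \eqref{eq:reduction_to_fringes_obj_2} already establishes the first $\sim$ in the statement (modulo what is clearly a typo, reading $L(A)^c$ in place of $L(A)$ in the claim), it suffices to prove
\[
    \E \Big[ \big| L(A)^c \cap \left[-dN, (-d+\tau)N \right] \big| \Big] \sim \frac{\Gamma\!\left( \tfrac{1}{h-1} \right) \sqrt[h-1]{(h-1)! \cdot \theta_L \cdot \prod_{i=1}^{h} |u_i|}}{(h-1) \cdot p(N)^{\frac{h}{h-1}}},
\]
and then double by invoking \eqref{eq:left_right_symmetry} applied to $\Tilde{L}$ (which inherits the same coefficients up to sign, hence the same $\theta_L$ and $\prod|u_i|$). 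This reduces the entire proposition to estimating $\sum_{k=0}^{\tau N} \Prob[-dN+k \notin L(A)]$.

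To handle this sum, I would split at $k = g(N)$. The range $k \in [0, g(N)]$ trivially contributes at most $g(N) + 1 \lll (1/p)^{h/(h-1)}$ by the middle condition in \eqref{eq:g_asymptotics}, which is negligible compared to the target. On $k \in [g(N), \tau N]$, I would invoke Lemma \ref{lem:slow_poisson_conv} to replace $\Prob[-dN+k \notin L(A)]$ by $\exp(-\lambda_k N^{h-1} p^h)$ up to a relative $(1+o(1))$ factor uniform in $k$. In this fringe regime, $k/N \leq \tau \lll 1$, so in the definition \eqref{eq:lambda_k} only the term $t_1 = \cdots = t_h = 0$ contributes a nonzero value of $\IH_h$, and by \eqref{eq:irwin_hall_density} this gives $\IH_h(k/N) = (k/N)^{h-1}/(h-1)!$. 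Therefore
\[
    \lambda_k N^{h-1} p^h \;\sim\; \frac{k^{h-1} p^h}{(h-1)!\cdot \theta_L \cdot \prod_{i=1}^{h}|u_i|} \;=:\; C\, k^{h-1} p^h,
\]
uniformly in $k \in [g(N), \tau N]$.

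The remainder is now a clean analytic computation. I would approximate the sum $\sum_{k = g(N)}^{\tau N} e^{-C k^{h-1} p^h}$ by the integral $\int_{g(N)}^{\tau N} e^{-C x^{h-1} p^h}\,dx$ (the integrand is monotone decreasing, making the Riemann sum comparison routine), then substitute $u = p^{h/(h-1)} x$ to get $p^{-h/(h-1)} \int_{g(N) p^{h/(h-1)}}^{f(N)} e^{-C u^{h-1}}\,du$. Since $g(N) p^{h/(h-1)} \to 0$ by \eqref{eq:g_asymptotics} and $f(N) = \tau N p^{h/(h-1)} \to \infty$ by \eqref{eq:f_asymptotics}, the limits become $(0, \infty)$ in the limit, with the two tail truncations easily bounded (the lower tail by $g(N) p^{h/(h-1)} = o(1)$ and the upper tail by $\exp(-C f(N)^{h-1})$ times a polynomial). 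The standard substitution $v = C u^{h-1}$ yields
\[
    \int_0^\infty e^{-C u^{h-1}}\,du \;=\; \frac{\Gamma\!\left( \tfrac{1}{h-1}\right)}{(h-1)\, C^{1/(h-1)}} \;=\; \frac{\Gamma\!\left( \tfrac{1}{h-1}\right) \sqrt[h-1]{(h-1)!\cdot \theta_L \cdot \prod_{i=1}^h |u_i|}}{h-1}.
\]
Multiplying by $p^{-h/(h-1)}$ gives the single-fringe asymptotic, and doubling produces the announced formula.

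The main obstacle, in my view, is the uniform passage from sum to integral combined with the multiplicative $(1+o(1))$ error supplied by Lemma \ref{lem:slow_poisson_conv}: one must verify that the aggregated error stays $o(p^{-h/(h-1)})$ after summing over $\tau N \asymp f(N) p^{-h/(h-1)}$ many terms. This works because Lemma \ref{lem:slow_poisson_conv} gives a uniform relative error $o(1)$, and the bulk of the sum's mass is concentrated where $k \asymp p^{-h/(h-1)}$, so the relative error survives multiplication by the full sum. The lower-order term from $k \in [0, g(N)]$ and from the boundary mismatch of the Riemann sum are both absorbed by the $f(N) \to \infty$ slack encoded in \eqref{eq:f_asymptotics}.
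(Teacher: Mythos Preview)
Your proposal is correct and follows essentially the same approach as the paper's own proof: reduce to the fringes via \eqref{eq:reduction_to_fringes_obj_2}, discard $k < g(N)$ as negligible, apply Lemma~\ref{lem:slow_poisson_conv} on $[g(N),\tau N]$, simplify $\lambda_k$ to $k^{h-1}/((h-1)!\,\theta_L\prod|u_i|\,N^{h-1})$ in the fringe, pass from sum to integral, substitute to reach $\int_0^\infty e^{-x^{h-1}}\,dx$, evaluate via the Gamma function, and double using the $\Tilde{L}$ symmetry \eqref{eq:left_right_symmetry}. You also correctly flag the typo in the statement (the middle expression should read $L(A)^c$, matching \eqref{eq:reduction_to_fringes_obj_2}).
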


\begin{proof}
    We first prove the expectation asymptotic for $|L_{\neq}(A)^c|$. By \eqref{eq:reduction_to_fringes_obj_2}, it suffices to show that 
    \begin{align*}
        \E \left[ \left| L_{\neq}(A)^c \cap [-dN+K_2(N), sN-K_2(N)]^c \right| \right] \sim \frac{2 \cdot \Gamma\left( \frac{1}{h-1} \right) \sqrt[h-1]{(h-1)! \cdot |\Aut(L)| \cdot \prod_{i=1}^{h} |u_i|}}{(h-1) \cdot p(N)^{\frac{h}{h-1}}}.
    \end{align*}
    We begin with the left fringe. Using \eqref{eq:K1_asymptotics}, we write
    \begin{align}
        & \E\left[ \left| L_{\neq}(A)^c \cap [-dN,-dN+K_2(N)] \right| \right] \nonumber \\
        & \qquad = o\left( (1/p)^{\frac{h}{h-1}} \right) + \E\left[ \left| L_{\neq}(A)^c \cap [-dN + K_1(N),-dN+K_2(N)] \right| \right]. \label{eq:fringes_expectation_1}
    \end{align}
    We express the latter term as
    \begin{align}
        & \E\left[ \left| L_{\neq}(A)^c \cap [-dN + K_1(N), -dN+K_2(N)] \right| \right] = \sum_{k = K_1(N)}^{K_2(N)} \Prob\left[-dN+k \notin L_{\neq}(A) \right] \nonumber\\
        & \quad \stackrel{\text{(\cref{lem:slow_poisson_conv})}}{=} \sum_{k = K_1(N)}^{K_2(N)} \left( \Prob\left[-dN+k \notin L_{\neq}(A) \right] - e^{-\Phi_L(k/N)N^{h-1}p^h} + e^{-\Phi_L(k/N)N^{h-1}p^h} \right) \nonumber \\
        & \quad \sim \sum_{k=K_1(N)}^{K_2(N)} e^{-\Phi_L(k/N)N^{h-1}p^h} \stackrel{\eqref{eq:gamma_1}}{=} \sum_{k=K_1(N)}^{K_2(N)} \exp\left(-\frac{p^hk^{h-1}}{(h-1)! \cdot |\Aut(L)| \cdot \prod_{i=1}^h |u_i|} \right) \nonumber \\
        & \quad \sim \int_{K_1(N)}^{K_2(N)} \exp\left(-\frac{p^hx^{h-1}}{(h-1)! \cdot |\Aut(L)| \cdot \prod_{i=1}^h |u_i|} \right) \ dx \nonumber \\
        & \quad \sim \sqrt[h-1]{\frac{(h-1)! \cdot |\Aut(L)| \cdot \prod_{i=1}^h |u_i|}{p^h}} \int_{K_1(N) \sqrt[h-1]{\frac{p^h}{(h-1)! \cdot |\Aut(L)| \cdot \prod_{i=1}^h |u_i|}}}^{K_2(N) \sqrt[h-1]{\frac{p^h}{(h-1)! \cdot |\Aut(L)| \cdot \prod_{i=1}^h |u_i|}}} e^{-x^{h-1}} \ dx \nonumber \\
        & \quad \stackrel{\text{(DCT)}}{\sim} \frac{\sqrt[h-1]{(h-1)! \cdot |\Aut(L)| \cdot \prod_{i=1}^h |u_i|}}{p^{\frac{h}{h-1}}} \int_0^\infty e^{-x^{h-1}} \ dx \nonumber \\
        & \quad = \frac{\Gamma\left( \frac{1}{h-1} \right)\sqrt[h-1]{(h-1)! \cdot |\Aut(L)| \cdot \prod_{i=1}^h |u_i|}}{(h-1) \cdot p^{\frac{h}{h-1}}}. \label{eq:fringes_expectation_5}
    \end{align}
    Notably, the invocation of the dominated convergence theorem holds since \eqref{eq:K1_asymptotics} and \eqref{eq:w_asymptotics} imply that the lower and upper limits of the integral respectively tend to zero and infinity. Now, we have that
    \begin{align*}
        \E\left[ \left| L_{\neq}(A)^c \cap [-dN, -dN+K_2(N)] \right| \right] \stackrel{\eqref{eq:fringes_expectation_1},\eqref{eq:fringes_expectation_5}}{\sim} \frac{\Gamma\left( \frac{1}{h-1} \right)\sqrt[h-1]{(h-1)! \cdot |\Aut(L)| \cdot \prod_{i=1}^h |u_i|}}{(h-1) \cdot p^{\frac{h}{h-1}}}.
    \end{align*}
    We now handle the right fringe. Proceeding as in \eqref{eq:fringes_expectation_1},
    \begin{equation} \label{eq:fringes_expectation_7}
        \begin{aligned}
            & \E\left[ \left| L_{\neq}(A)^c \cap [sN-K_2(N), sN] \right| \right] = o\left( (1/p)^{\frac{h}{h-1}} \right) + \sum_{k=K_1(N)}^{K_2(N)} \Prob\left[ sN-k \notin L_{\neq}(A) \right] \\
            & \quad \stackrel{\eqref{eq:left_right_symmetry}}{=} o\left( (1/p)^{\frac{h}{h-1}} \right) + \sum_{k=K_1(N)}^{K_2(N)} \Prob\left[ -sN+k \notin (-L)_{\neq}(A) \right] \\
            & \quad \stackrel{\eqref{eq:fringes_expectation_5}}{\sim} \frac{\Gamma\left( \frac{1}{h-1} \right)\sqrt[h-1]{(h-1)! \cdot |\Aut(L)| \cdot \prod_{i=1}^h |u_i|}}{(h-1) \cdot p^{\frac{h}{h-1}}}, 
        \end{aligned}
    \end{equation}
    where we have applied \eqref{eq:fringes_expectation_5} in the context of the linear form $-L$. Altogether, we conclude that
    \begin{equation} \label{eq:injective_model_expectation}
        \begin{aligned}
            & \E \left[ \left| L_{\neq}(A)^c \cap [-dN+K_2(N), sN-K_2(N)]^c \right| \right] \\
            & \qquad \stackrel{\eqref{eq:fringes_expectation_5}, \eqref{eq:fringes_expectation_7}}{\sim} \frac{2 \cdot \Gamma\left( \frac{1}{h-1} \right)\sqrt[h-1]{(h-1)! \cdot |\Aut(L)| \cdot \prod_{i=1}^h |u_i|}}{(h-1) \cdot p^{\frac{h}{h-1}}},
        \end{aligned}
    \end{equation}
    thus establishing \cref{prop:slow_fringes_expectation} for $|L_{\neq}(A)^c|$. 

    \smallskip
    
    We conclude by extending this result to $|L(A)^c|$. We observe that
    \begin{align*}
        L_{\neq}(A)^c = L(A)^c \sqcup \left( L(A) \setminus L_{\neq}(A) \right) \implies |L(A)^c| = |L_{\neq}(A)^c| - |L(A) \setminus L_{\neq}(A)|.
    \end{align*}
    It thus suffices to show that 
    \begin{align*}
        \E[|L(A) \setminus L_{\neq}(A)|] \ll (1/p)^{\frac{h}{h-1}}.
    \end{align*}
    Towards this end, we observe that (using the notation introduced in the proof of \cref{prop:critical_expectations})
    \begin{align*}
        & \E\left[ \left| L(A) \setminus L_{\neq}(A) \right| \right] \\
        & \qquad \leq \E\left[ \left| (L(A) \setminus L_{\neq}(A)) \cap [-dN+K_2(N), sN-K_2(N)]^c \right| \right] \\
        & \qquad \qquad + \E\left[ \left| L_{\neq}(A)^c \cap [-dN+K_2(N), sN-K_2(N)] \right| \right] \\
        & \qquad \stackrel{\eqref{eq:reduction_to_fringes_obj_2},\eqref{eq:injective_model_expectation}}{=} \E\left[ \left| (L(A) \setminus L_{\neq}(A)) \cap [-dN+K_2(N), sN-K_2(N)] \right| \right] +  o\left( (1/p)^{\frac{h}{h-1}} \right) \\ 
        & \qquad \stackrel{\eqref{eq:critical_expectation_non_injective}}{\leq} \sum_{\pi \in \Pi_h} \E\left[ \left| L^\pi_{\neq}(A) \cap [-dN+K_2(N), sN-K_2(N)]^c \right| \right] +  o\left( (1/p)^{\frac{h}{h-1}} \right).
    \end{align*}
    We fix some collision pattern $\pi \in \Pi_h$, and we let $r < h$ denote the number of variables that the collapsed linear form $L^\pi$ is in. We first consider the setting in which $2 \leq r \leq h-1$. Since $r/(r-1) > h/(h-1)$, we may choose the $r$-variable cutoffs in \cref{lem:slow_poisson_conv} so that their admissible $k$-interval contains $[K_1(N),K_2(N)]$. Therefore, adapting the computations in \eqref{eq:fringes_expectation_5} gives
    \begin{align*}
        & \E\left[ \left| L^\pi_{\neq}(A) \cap [-dN, -dN+K_2(N)] \right| \right] \sim o\left( (1/p)^{\frac{h}{h-1}} \right) + \int_0^{K_2(N)} 1-e^{-C_\pi p^rx^{r-1}} dx \\
        & \quad \leq o\left( (1/p)^{\frac{h}{h-1}} \right) + K_2(N) \cdot \left( 1 - e^{-C_\pi p^r K_2(N)^{r-1}} \right) \stackrel{\left( 1 - e^{-x} \leq x \right)}{\lesssim} o\left( (1/p)^{\frac{h}{h-1}} \right) + K_2(N) \cdot p^r K_2(N)^{r-1} \\
        & \quad = o\left( (1/p)^{\frac{h}{h-1}} \right) + \left( K_2(N) p \right)^r \stackrel{\eqref{eq:K2}}{=} o\left( (1/p)^{\frac{h}{h-1}} \right) + \left( w(N) \cdot (1/p)^{1/(h-1)} \right)^r \\
        & \quad \stackrel{(r < h)}{\leq} o\left( (1/p)^{\frac{h}{h-1}} \right) + w(N)^{h-1}\cdot (1/p) \stackrel{\eqref{eq:w_asymptotics}}{\ll} (1/p)^{h/(h-1)}.
    \end{align*}
    If $r = 1$, then we have that
    \begin{align*}
        \E\left[ \left| L^\pi_{\neq}(A) \cap [-dN, -dN+K_2(N)] \right| \right] \leq K_2(N)p \stackrel{\eqref{eq:K2}}{=} w(N) \cdot (1/p)^{1/(h-1)} \stackrel{\eqref{eq:w_asymptotics}}{\ll} (1/p)^{h/(h-1)}.
    \end{align*}
    If $r = 0$, this expectation is zero. An analogous argument for the right fringe now completes the proof.
\end{proof}

\subsection{Concentration} \label{subsec:concentration}

We now show that the random variable $|L(A)^c|$ is strongly concentrated about its expectation. Together with \cref{prop:slow_fringes_expectation}, this will prove \cref{thm:Z_linear_forms_iii}.

\begin{proposition} \label{prop:slow_concentration}
    If \eqref{eq:slow_decay_assumption} holds, then $|L(A)^c| \sim \E\left[ |L(A)^c| \right]$.
\end{proposition}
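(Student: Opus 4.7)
The plan is to adapt the martingale-concentration argument of Proposition \ref{prop:critical_concentration} to the present regime. The overall scaffolding---apply Theorem \ref{thm:vu_lemma} to $|L(A)^c|$ with suitable choices of $\lambda$, $V$, $C$---remains unchanged, but the critical-case simplification that $\Delta_{n,1}(A) \equiv 0$ no longer holds: because $\tau N \lll N$ under \eqref{eq:slow_decay_assumption}, the ``middle'' $[(-d+\tau)N,(s-\tau)N]$ is now a macroscopic interval, and flipping $a_n$ can create new sums in the middle. The principal new ingredient is Remark \ref{rmk:forbidden_summand}, which gives a super-exponential upper bound on $\Pr[-dN+k \notin L(A \setminus \{n\})]$ for middle offsets $k$ uniformly in the forbidden index $n$.

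Following the template, I would introduce $\mathcal I_n := [-dN+\min\{n,N-n\},\, sN-\min\{n,N-n\}]$ and decompose $\Delta_n(A) = \Delta_{n,1}(A) + \Delta_{n,2}(A)$ according to the partition of $\mathcal I_n$ by $[(-d+\tau)N,(s-\tau)N]$ and its complement. The fringe piece $\Delta_{n,2}$ is controlled exactly as in Proposition \ref{prop:critical_concentration}: for $n \in [\tau N, N-\tau N]$ we have $\mathcal I_n$ sitting inside the middle, hence $\Delta_{n,2}(A) = 0$; for $n$ in the fringe, any new sum must use $n$ together with $h-1$ summands drawn from $(A \cap [0, O(\tau N)]) \cup (A \cap [N-O(\tau N), N])$, whose sizes are $O(\tau N p) = O(f/p^{1/(h-1)})$ with probability $1-o(1)$ by Chernoff, yielding $\Delta_{n,2}(A) \lesssim f^{h-1}/p \asymp (\log(1/p))/p$ uniformly for such $n$ with probability $1-o(1)$.

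For the new middle contribution, I would use $\Delta_{n,1}(A) \leq \E[\,|L(A \setminus \{n\})^c \cap [(-d+\tau)N,(s-\tau)N]| \mid a_0,\dots,a_{n-1}\,]$ and apply Remark \ref{rmk:forbidden_summand} pointwise. Summing the bounds $e^{-C \min\{k,mN-k\} p}$ over $k \in [\tau N, (m-\tau)N]$ gives $\E[\Delta_{n,1}(A)] \lesssim e^{-C f(N)/p^{1/(h-1)}}/p$, which vanishes faster than any negative power of $N$ thanks to \eqref{eq:f_asymptotics}. Markov's inequality followed by a union bound over $n \in I_N$ then yields $\Delta_{n,1}(A) = o(1)$ simultaneously for all $n$ with probability $1-o(1)$, so that $\Delta_{n,1}$ is negligible compared to $\Delta_{n,2}$ where the latter is nonzero and contributes negligibly to $V(A)$ elsewhere.

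Combining these estimates through \eqref{eq:C(A)_reformulation} and \eqref{eq:V(A)_reformulation} gives $C(A) \lesssim (\log(1/p))/p$ and $V(A) \lesssim (\log(1/p))^{(2h-1)/(h-1)}/p^{(2h-1)/(h-1)}$ with probability $1-o(1)$. With the choice $\lambda \asymp \log(1/p)$, a direct computation confirms $\lambda \leq 4V/C^2$ and $\sqrt{\lambda V} \asymp (\log(1/p))^{(3h-2)/(2(h-1))}/p^{(2h-1)/(2(h-1))} \lll (1/p)^{h/(h-1)} \asymp \E[|L(A)^c|]$, so Theorem \ref{thm:vu_lemma} together with Proposition \ref{prop:slow_fringes_expectation} delivers $|L(A)^c| \sim \E[|L(A)^c|]$. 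The main obstacle is precisely the middle contribution: the super-exponential decay furnished by Remark \ref{rmk:forbidden_summand} must be strong enough to survive the union bound over all $N+1$ indices, and this is exactly what the sharp upper bound on $f$ in \eqref{eq:f_asymptotics} is designed to guarantee.
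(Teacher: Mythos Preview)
Your overall architecture matches the paper's: reuse the martingale template of Proposition \ref{prop:critical_concentration}, with the new work being to control $\Delta_{n,1}(A)$ via Remark \ref{rmk:forbidden_summand}. But the way you bound $\Delta_{n,1}$ has a genuine gap.

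You upper-bound $\Delta_{n,1}(A)$ by the conditional expectation of $|L(A\setminus\{n\})^c|$ over the \emph{entire} middle $[\tau N,(m-\tau)N]$, dropping the restriction to $\mathcal I_n$. This gives an $n$-independent estimate $\E[\Delta_{n,1}]\lesssim (1/p)\exp(-Cf(1/p)^{1/(h-1)})$, and you assert this ``vanishes faster than any negative power of $N$.'' That assertion is false in the slow part of the supercritical regime. Take $p=1/\log N$: the second line of \eqref{eq:f_asymptotics} forces $f^{h-1}\lesssim \log(1/p)=\log\log N$, so $f(1/p)^{1/(h-1)}\lesssim ((\log N)(\log\log N))^{1/(h-1)}\lll \log N$ for $h\geq 3$, and $\exp(-Cf(1/p)^{1/(h-1)})$ is not even $o(1/N)$. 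The union bound over $N+1$ indices then fails, and the same problem prevents you from controlling $p\sum_n(\Delta_{n,1})^2$ inside $V(A)$, since a uniform $o(1)$ bound on $\Delta_{n,1}$ only gives $o(pN)$, which can dwarf the target $V$.

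The fix, which the paper implements, is to keep the $\mathcal I_n$ restriction: a new sum produced by flipping $a_n$ has offset $k\geq \min\{n,N-n\}$, so the summation in $\E[\Delta_{n,1}]$ starts at $\max\{\tilde g,\min\{n,N-n\}\}$ (the paper uses $\tilde g=(1/p)^{h/(h-1)}\log(1/p)$ rather than $\tau N$, but either cutoff works once this $n$-dependence is retained). One then applies Markov with an $n$-dependent threshold and observes that $\sum_n \exp(-(C/4)p\max\{\tilde g,\min\{n,N-n\}\})$ is a geometric-type sum with no factor of $N$; this is what makes both the union bound and $p\sum_n(\Delta_{n,1})^2\lll 1$ go through. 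Two smaller points: the claim $f^{h-1}\asymp \log(1/p)$ is unjustified (so your stated powers of $\log(1/p)$ in $C(A)$, $V(A)$, $\sqrt{\lambda V}$ are off), and your closing remark is backwards---it is a large exponent, not the upper bound on $f$, that would help the union bound, and neither condition in \eqref{eq:f_asymptotics} supplies that.
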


\begin{proof}
    We proceed exactly as in the proof of \cref{prop:critical_concentration}, but with the following modifications. First, unlike in the proof of \cref{prop:critical_concentration}, we will need to show that $\Delta_{n,1}(A)$ is modest with high probability. Towards this end, we introduce the shorthand
    \begin{align*}
        \kappa(n) := \max\left\{K_3(N), \min\{n,N-n\}\right\}.
    \end{align*}
    The expectation of $\Delta_{n,1}(A)$ satisfies, for a constant $C > 0$ independent of $n$, 
    \begin{align}
        & \E\left[ \Delta_{n,1}(A) \right] \\
        & \qquad = \E\left[\E\left[ \left| \left(L\left(A \cup \{n\}\right) \setminus L\left(A \setminus \{n\}\right) \right) \cap (\mathscr{I}_n \cap [-dN+K_3(N), sN-K_3(N)]) \right| \ \Big| \ \mathcal F_{n-1} \right]\right] \nonumber \\
        & \qquad = \E\left[ \left| \left(L\left(A \cup \{n\}\right) \setminus L\left(A \setminus \{n\}\right) \right) \cap (\mathscr{I}_n \cap [-dN+K_3(N), sN-K_3(N)]) \right| \right] \nonumber \\
        & \qquad = \sum_{k \in \mathscr{I}_n \cap [-dN+K_3(N), sN-K_3(N)]} \Prob\left[ \left( k \in L\left(A \cup \{n\}\right) \right) \land \left( k \notin L\left(A \setminus \{n\}\right)\right) \right] \nonumber \\
        & \qquad \leq \sum_{k=\kappa(n)}^{mN-\kappa(n)} \Prob\left[ -dN+k \notin L\left(A \setminus \{n\}\right) \right] \stackrel{\text{(\cref{rmk:forbidden_summand})}}{\leq} \sum_{k=\kappa(n)}^{mN/2} e^{-Ckp} + e^{-Ck^{h-1}p^h} \label{eq:restricted_sum_1} \\
        & \qquad \lesssim (1/p)\exp\left( -Cp\left(\kappa(n)-1\right) \right) + \left( 1/p \right)^{h/(h-1)} \int_{(\kappa(n)-1)p^{h/(h-1)}}^\infty e^{-Cx^{h-1}} \ dx \label{eq:restricted_sum_2} \\
        & \qquad \lesssim (1/p)\exp\left( -(C/2)p \cdot \kappa(n) \right) + \left( 1/p \right)^{h/(h-1)}\exp\left( - (C/2) \kappa(n)^{h-1} p^h \right). \label{eq:restricted_sum_3}
    \end{align}
    We mention that we may appeal to \cref{rmk:forbidden_summand} in \eqref{eq:restricted_sum_1} since $K_3(N) \gg (1/p)^{\frac{h}{h-1}}$ by definition. The constants implicit in \eqref{eq:restricted_sum_2} and \eqref{eq:restricted_sum_3} hold uniformly over $n$. The inequality \eqref{eq:restricted_sum_2} follows from computations entirely analogous to those performed in the proof of \cref{lem:missing_middle}. Therefore, we have that
    \begin{align}
        & \Prob\left( \Delta_{n,1}(A) \geq (1/p)\exp\left( -(C/4)p \cdot \kappa(n) \right) + \left( 1/p \right)^{h/(h-1)}\exp\left( - (C/4) \kappa(n)^{h-1} p^h \right) \right) \nonumber \\
        & \stackrel{\text{(Markov)}, \eqref{eq:restricted_sum_3}}{\lesssim} \frac{(1/p)\exp\left( -(C/2)p \cdot \kappa(n) \right) + \left( 1/p \right)^{h/(h-1)}\exp\left( - (C/2) \kappa(n)^{h-1} p^h \right)}{(1/p)\exp\left( -(C/4)p \cdot \kappa(n) \right) + \left( 1/p \right)^{h/(h-1)}\exp\left( - (C/4) \kappa(n)^{h-1} p^h \right)} \nonumber \\
        & \qquad \lesssim \exp\left( -(C/4)p \cdot \kappa(n) \right) + \exp\left( -(C/4)\kappa(n)^{h-1} p^h \right). \label{eq:markov_1}
    \end{align}
    By a union bound, it now follows from \eqref{eq:markov_1} that
    \begin{align}
        & \Prob\left( \Delta_{n,1}(A) \geq (1/p)\exp\left( -(C/4)p \cdot \kappa(n) \right) + \left( 1/p \right)^{h/(h-1)}\exp\left( - (C/4) \kappa(n)^{h-1} p^h \right) \text{ for some } n \right) \nonumber \\
        & \qquad \lesssim \sum_{n = 0}^{N} \exp\left( -(C/4)p \cdot \kappa(n) \right) + \exp\left( -(C/4)\kappa(n)^{h-1} p^h \right) \nonumber \\
        & \qquad = \sum_{n = 0}^{N} \exp\left( -(C/4)p \cdot \kappa(n) \right) + \sum_{n = 0}^{N} \exp\left( -(C/4)\kappa(n)^{h-1} p^h \right). \label{eq:tail_bd_two_sums}
    \end{align}
    We study each of the two sums in \eqref{eq:tail_bd_two_sums} separately. We may bound the first sum via
    \begin{align}
        & \sum_{n = 0}^{N} \exp\left( -(C/4)p \cdot \kappa(n) \right) \\
        & \quad \leq \sum_{n=0}^{K_3(N)} \exp\left( -\frac{CpK_3(N)}{4} \right) + \sum_{n=N-K_3(N)}^N \exp\left( -\frac{CpK_3(N)}{4} \right) + 2\sum_{n=K_3(N)+1}^{N/2} \exp\left( -\frac{Cpn}{2}\right) \nonumber \\
        & \quad \lesssim \frac{\log(1/p)}{p^{\frac{h}{h-1}}}\exp\left(-\frac{C \log(1/p)}{4p^{\frac{1}{h-1}}} \right) + \int_{K_3(N)}^\infty \exp\left( -\frac{Cpx}{2}\right) \ dx \nonumber \\
        & \quad \lesssim \left((1/p)^{\frac{1}{h-1}} \log(1/p) \right)^h \exp\left(-(C/4) (1/p)^{\frac{1}{h-1}}\log(1/p) \right) + (1/p) \exp\left(-(C/4)(1/p)^{\frac{1}{h-1}} \log(1/p) \right) \nonumber \\
        & \quad \ll 1. \label{eq:tail_bd_sum_1}
    \end{align}
    We may similarly bound the second sum via
    \begin{align}
        & \sum_{n = 0}^{N} \exp\left( -(C/4)\kappa(n)^{h-1} p^h \right) \nonumber \\
        & \quad \leq \sum_{n=0}^{K_3(N)} \exp\left( -\frac{Cp^hK_3(N)^{h-1}}{4} \right) + \sum_{n=N-K_3(N)}^N \exp\left( -\frac{Cp^hK_3(N)^{h-1}}{4} \right) \\
        & \qquad \qquad \qquad + 2\sum_{n=K_3(N)+1}^{N/2} \exp\left( -\frac{Cp^hn^{h-1}}{2}\right) \nonumber \\
        & \quad \lesssim \frac{\log(1/p)}{p^{\frac{h}{h-1}}}\exp\left(-\frac{C \left( \log(1/p) \right)^{h-1}}{4p^{\frac{1}{h-1}}} \right) + \int_{K_3(N)}^\infty \exp\left( -\frac{Cp^hx^{h-1}}{2}\right) \ dx \nonumber \\
        & \quad \lesssim o(1) + \left( 1/p \right)^{h/(h-1)} \int_{(K_3(N)-1)p^{h/(h-1)}}^\infty e^{-Cx^{h-1}/2} \ dx \nonumber \\
        & \quad \lesssim o(1) + \left( 1/p \right)^{h/(h-1)} \int_{\log(1/p)-p^{h/(h-1)}}^\infty e^{-Cx^{h-1}/2} \ dx \nonumber \\
        & \quad \lesssim o(1) + \left( 1/p \right)^{h/(h-1)} \int_{\log(1/p)/2}^\infty e^{-3hx} \ dx \lesssim o(1) + \left( 1/p \right)^{h/(h-1)}\exp\left( -\frac{3h\log(1/p)}{2} \right) \ll 1. \label{eq:tail_bd_sum_2}
    \end{align}
    Combining \eqref{eq:tail_bd_sum_1} and \eqref{eq:tail_bd_sum_2}, we conclude that with probability $1-o(1)$, it holds for all $n \in I_N$ that
    \begin{equation} \label{eq:delta_1_bound}
    	\begin{aligned} 
            & \Delta_{n,1}(A) \leq (1/p)\exp\left( -(C/4)p \cdot \kappa(n) \right) + \left( 1/p \right)^{h/(h-1)}\exp\left( - (C/4) \kappa(n)^{h-1} p^h \right) \\
            & \stackrel{\left( -\kappa(n) \leq K_3(N) \right)}{\leq} (1/p) \exp\left(-(C/4) (1/p)^{\frac{1}{h-1}}\log(1/p) \right) + \left( 1/p \right)^{h/(h-1)}\exp\left( - (C/4) \left( \log \left( 1/p \right) \right)^{h-1} \right) \\
            & \qquad \ll 1.
        \end{aligned}
    \end{equation}
    Furthermore, on this $1-o(1)$ probability event, it holds that
    \begin{align*}
        & V(A) \stackrel{\eqref{eq:V(A)_reformulation}}{\sim} p \sum_{n=0}^N \left( \Delta_{n,1}(A) \right)^2 \\
        & \stackrel{\eqref{eq:delta_1_bound}}{\leq} p \left[ (1/p)^2 \exp\left(-(C/2)(1/p)^{\frac{1}{h-1}} \log(1/p) \right) + \left( 1/p \right)^{2h/(h-1)}\exp\left( - (C/2) \left( \log \left( 1/p \right) \right)^{h-1} \right) \right] \\
        & = (1/p) \exp\left(-(C/2)(1/p)^{\frac{1}{h-1}} \log(1/p) \right) + \left( 1/p \right)^{2h/(h-1)-1}\exp\left( - (C/2) \left( \log \left( 1/p \right) \right)^{h-1} \right) \ll 1.
    \end{align*}
    Finally, if we assume \eqref{eq:slow_decay_assumption}, then \eqref{eq:root_lambda_V} and \cref{lem:missing_fringes} together imply the sufficiency of showing \eqref{eq:vanishing_RHS}. With these adjustments, tracing the proof of \cref{prop:critical_concentration} proves the desired.
\end{proof}

This completes the proof of \cref{thm:Z_linear_forms}.

\section{Local Poisson Convergence} \label{sec:poisson_convergence}

The methods we used to prove \cref{thm:Z_linear_forms} lend themselves quickly to a number of observations concerning the asymptotic behavior of the number of representations, which is captured in \cref{thm:poisson_convergence}. In the following computations, we assume that $k \in [0, mN/2]$. As remarked in \cref{sec:computations}, we may extend these results to $k \in [mN/2, mN]$ by appealing to \eqref{eq:left_right_process_equivalence}.

\subsection{Upper Bound}

We begin by proving an upper bound for $d_\TV\left(\mathcal L(W_k), \Pois(\mu_k)\right)$. In accordance with the condition of \cref{thm:poisson_convergence_i}, we assume throughout the proof of the upper bound that
\begin{align} \label{eq:poisson_conv_ub_assumption}
    p(N) \ll N^{-\frac{h-2}{h-1}} \iff N^{h-2}p^{h-1} \ll 1.
\end{align}
We let $K_4: \N \to \N$ be a function satisfying the conditions
\begin{align} \label{eq:K3_asymptotics}
    1/p \ll K_4(N) \ll (1/p)^{\frac{2h-1}{2h-3}}.
\end{align}
If $k \leq K_4(N)$, then it holds whenever $k > 0$ that
\begin{align*}
    d_\TV\left(\mathcal L(W_k), \Pois(\mu_k) \right) & \stackrel{\text{(\cref{thm:poisson_process_convergence})}}{\leq} \min\{1, \mu_k^{-1} \} \left(b_1(k) + b_2(k)\right) \stackrel{\eqref{eq:b_1(k)_bound}}{\lesssim} k^{h-1}p^{2h} + \sum_{\ell=h}^{2h-1} k^{\ell-2}p^\ell \\
    & \stackrel{\eqref{eq:K3_asymptotics}}{\ll} \sum_{\ell=h}^{2h-1} (1/p)^{\frac{(\ell-2)(2h-1)}{2h-3}}p^\ell = \sum_{\ell=h}^{2h-1} p^{\frac{2(2h-1)}{2h-3} - \frac{2\ell}{2h-3}} \lesssim 1.
\end{align*}
This asymptotic readily follows for $k = 0$ via bounding $b_1(0) + b_2(0)$ since $|\mathscr{E}_0| \leq 1$. We now consider the $k > K_4(N)$ regime. If we work in the exceptional setting of \cref{lem:L_expression_tuples_lower_bounds_i} in which $h = 2$, $L(x_1, x_2) = x_1 - x_2$, and $k = N$, then $W_k \equiv 0$ and $\mu_k = 0$ (as there are no injective $2$-tuples $(a_1, a_2) \in I_N^{\underline{2}}$ such that $a_1 - a_2 = 0$), so \cref{thm:poisson_convergence_i} immediately follows. Otherwise, it holds that
\begin{align*}
    d_\TV\left(\mathcal L(W_k), \Pois(\mu_k) \right) & \stackrel{\text{(\cref{thm:poisson_process_convergence})}}{\leq} \min\{1, \mu_k^{-1} \} \left(b_1(k) + b_2(k)\right) \stackrel{\eqref{eq:basic_computation_mean}, \eqref{eq:AGG_1_not_dN}}{\lesssim} \frac{k^{2h-3}p^{2h-1}}{k^{h-1}p^h} \\
    & = k^{h-2}p^{h-1} \leq N^{h-2}p^{h-1} \stackrel{\eqref{eq:poisson_conv_ub_assumption}}{\ll} 1.
\end{align*}
This proves \cref{thm:poisson_convergence_i}. 

\smallskip

\subsection{Lower Bound}

We now turn to proving a lower bound for this total variation distance, which will show that the regime of $p(N)$ for which we established the limiting Poisson behavior of $W_k$ for all $k \in \left[ 0, mN \right]$ is sharp. We assume $h \geq 3$, as the regime in \cref{thm:poisson_convergence_ii} cannot arise when $h=2$ under the standing assumptions on $p(N)$. In what follows, we assume that
\begin{align} \label{eq:local_critical_decay}
    p(N) \gtrsim N^{-\frac{h-2}{h-1}} \iff N^{h-2}p^{h-1} \gtrsim 1.
\end{align}
We fix a constant $C > 0$, and we study values $k \in [CN, mN/2]$, for which we will be able to obtain a meaningful lower bound on $d_\TV\left(\mathcal L(W_k), \Pois(\mu_k)\right)$ via \cref{thm:positively_related_lower_bound}. We begin by observing that the collection of random variables $\{X_\Lambda: \Lambda \in \mathscr{E}_k\}$ is positively related. Indeed, for $\Lambda, \Lambda' \in \mathscr{E}_k$, let
\begin{align*}
    Y_{\Lambda'\Lambda} := I_{\{S(\Lambda') \setminus S(\Lambda) \subseteq A\}}.
\end{align*}
In particular, if $\Lambda = \Lambda'$, then $Y_{\Lambda'\Lambda} \equiv 1$. It readily follows that
\begin{align*}
    & \mathcal L\left(Y_{\Lambda'\Lambda}: \Lambda' \in \mathscr{E}_k\right) = \mathcal L\left(X_{\Lambda'}: \Lambda' \in \mathscr{E}_k \ | \ X_\Lambda = 1 \right),
    & Y_{\Lambda'\Lambda} \geq X_{\Lambda'} \text{ for all } \Lambda' \in \mathscr{E}_k \setminus \{\Lambda\}.
\end{align*}
Thus, we may invoke \cref{thm:positively_related_lower_bound}. Towards this end, we define for each $k \in [CN,mN/2]$ the quantities
\begin{align}
    \gamma_k & := \frac{\E\left[(W_k-\E[W_k])^4 \right]}{\mu_k} - 1; \nonumber \\
    \epsilon_k & := \frac{\Var(W_k)}{\mu_k} - 1 \stackrel{\eqref{eq:basic_computation_mean}}{\lesssim} \frac{\Var(W_k)}{k^{h-1}p^h} \stackrel{\eqref{eq:basic_computation_2},\eqref{eq:local_critical_decay},\left( k \geq CN \right)}{\lesssim} \frac{N^{2h-3}p^{2h-1}}{k^{h-1}p^h} \stackrel{\left( k \geq CN \right)}{\lesssim} N^{h-2}p^{h-1} \stackrel{\eqref{eq:local_critical_decay}}{=} \Omega(1). \label{eq:epsilon_k}
\end{align}
It also holds that, with all asymptotic statements holding uniformly over $k \in [CN, mN/2]$,
\begin{align} \label{eq:epsilon_k_lower_bound}
    \epsilon_k = \frac{\Var(W_k)-\mu_k}{\mu_k} \stackrel{\eqref{eq:frugal_variance_bounds}}{\gtrsim} \frac{k^{2h-3}p^{2h-1} - p \mu_k}{k^{h-1}p^h} \gtrsim k^{h-2}p^{h-1} - p \stackrel{\left( k \in [ CN, mN/2] \right)}{\gtrsim} N^{h-2}p^{h-1},
\end{align}
from which we conclude that
\begin{align} \label{eq:epsilon_k_asymp}
    \epsilon_k \stackrel{\eqref{eq:epsilon_k}, \eqref{eq:epsilon_k_lower_bound}}{\asymp} N^{h-2}p^{h-1} = \Omega(1).
\end{align}
We now proceed with the relevant computations. We have that
\begin{align*}
    \left( \frac{\gamma_k}{\mu_k \epsilon_k} \right)_+ \stackrel{(\gamma_k+1 \geq 0)}{\leq} \frac{\gamma_k+1}{\mu_k \epsilon_k} = \frac{\E\left[(W_k-\E[W_k])^4 \right]}{\mu_k^2 \epsilon_k} \stackrel{\eqref{eq:basic_computation_4}}{\lesssim} \frac{N^{4h-6}p^{4h-2}}{N^{3h-4}p^{3h-1}} = N^{h-2}p^{h-1} \stackrel{\eqref{eq:epsilon_k_asymp}}{\asymp} \epsilon_k.
\end{align*}
We also have that
\begin{align*}
    \left( \frac{15}{2} + \frac{7}{\mu_k} \right) \left( 1 + \epsilon_k \right)^2 \frac{\max_{\Lambda \in \mathscr{E}_k} p_\Lambda}{\epsilon_k} & \lesssim p\left( 1 + \frac{1}{\mu_k} \right) \frac{1 + \epsilon_k + \epsilon_k^2}{\epsilon_k} \stackrel{\left( \epsilon_k = \Omega(1) \right)}{\lesssim} p\left( 1 + \frac{1}{\mu_k} \right) \epsilon_k \\
    & \stackrel{\eqref{eq:basic_computation_mean}, \eqref{eq:epsilon_k}}{\lesssim} \epsilon_k + \frac{p \cdot N^{h-2}p^{h-1}}{N^{h-1}p^h} = \epsilon_k + \frac{1}{N} \lesssim \epsilon_k.
\end{align*}
Therefore, it follows that
\begin{align*}
    \psi_k := \left( 1 + \frac{3}{2}\max_{\Lambda \in \mathscr{E}_k} p_\Lambda \right) \left(\frac{\gamma_k}{\mu_k \epsilon_k}\right)_+ + 3\epsilon_k + \left( \frac{15}{2} + \frac{7}{\mu_k} \right)\left( 1 + \epsilon_k \right)^2\frac{\max_{\Lambda \in \mathscr{E}_k} p_\Lambda}{\epsilon_k} \asymp \epsilon_k. 
\end{align*}
Altogether, we conclude that uniformly over $k \in [CN, mN/2]$,
\begin{align*}
    d_\TV\left( \mathcal L(W_k), \Pois(\mu_k) \right) \geq \frac{\epsilon_k}{11+3\psi_k} \geq \frac{\epsilon_k}{11+ O(1)\epsilon_k} = \Omega(1),
\end{align*}
which proves \cref{thm:poisson_convergence_ii}. 

\section{Future Directions} \label{sec:future_directions}

We conclude by recording some future directions suggested by the results and by the proof framework developed in this paper. Answers to these questions may help clarify which aspects of the present threshold landscape are robust and which are specific to this one-dimensional, linear, independent setting.

\subsection{Generalizations}

At a high level, our theorems treat degree-one transformations (integer linear forms) applied to a $p$-random subset of an interval, and they identify distinct threshold scales for 
\begin{enumerate}
    \item[(i)] \emph{global coverage} of the feasible range, and 
    \item[(ii)] \emph{local multiplicity} / Poisson approximation for representation counts.
\end{enumerate}
It seems natural to ask how much of this ``two-scale" threshold picture persists when one changes either the ambient setting or the notion of local limit.

\subsubsection{Higher-dimensional lattices.}

One natural generalization is to replace the one-dimensional interval $\{0,1,\dots,N\} \subset \mathbb{Z}$ by the $d$-dimensional box $\{0,1,\dots,N\}^d \subset \mathbb{Z}^d$, and more generally to allow the linear map to take values in a different lattice dimension.  Concretely, we fix integers $d, r \geq 1$ and $h \geq 2$, and take a linear map
\begin{align*}
    & T:(\mathbb{Z}^d)^h \to \mathbb{Z}^r;
    & T(x_1,\dots,x_h)=U_1x_1+\cdots+U_hx_h.
\end{align*}
where the coefficient matrices $U_i \in \mathbb{Z}^{r \times d}$ are fixed. If $A\subseteq \{0,1,\dots,N\}^d$ is formed by including each lattice point in this $d$-dimensional box independently with probability
$p=p(N)$, one can ask for threshold behavior of the image
\begin{align*}
    T(A) := \left\{ T(a_1,\dots,a_h) : a_i \in A \right\} \subseteq \mathbb{Z}^r,
\end{align*}
its complement inside the feasible set $T\big((\{0,1,\dots,N\}^d)^h\big)$, and the associated representation counts (i.e., the number of essentially distinct $h$-tuples mapping to a candidate value in $\mathbb{Z}^r$).

Heuristically, the relevant scales should depend on both $d$ and $r$. Since it holds that $ |A| \sim N^d p$ while the feasible region typically contains $\asymp N^r$ lattice points (when $T$ has rank $r$ and is not degenerate), one expects the onset of collisions in the subcritical, essentially injective regime when
\begin{align*}
    |A|^h \asymp N^r \qquad \iff \qquad (N^d p)^h \asymp N^r,
\end{align*}
suggesting a global threshold on the order of $p(N) \asymp N^{-d + r/h}$. Likewise, for a bulk value $t \in \Z^r$, the expected number of representations is heuristically $\asymp N^{dh-r}p^h$, and a second (local) threshold should be governed by when overlaps among representations cease to be negligible. A naive pair-overlap count (as was done to prove the results in \cref{subsec:asymptotic_enumeration}) suggests this occurs when $N^{(h-1)d-r}p^{\,h-1}\asymp 1$, i.e., around $p(N)\asymp N^{-d + r/(h-1)}$ (in the underdetermined range where this lies below a constant). Making these heuristics rigorous would require uniform lattice-point enumeration for the system $T(\mathbf{x})=t$ with $\mathbf{x}\in(\{0,1,\dots,N\}^d)^h$, together with concentration guarantees that remain effective near the boundary of $T\big((\{0,1,\dots,N\}^d)^h\big)$.

\subsubsection{Higher degree polynomials.}
Our work settles the degree-one regime of this problem, i.e., the setting of linear forms. A more ambitious extension is to replace a linear form $L$ by a higher-degree polynomial map $P$, e.g., $P(x_1,x_2)=x_1^2+x_2$ or (more generally) a fixed polynomial in $h$ variables with integer coefficients, and to ask for the size and coverage properties of the mapping
\begin{align*}
    P(A) := \left\{ P(a_1, \dots, a_h) : a_i \in A \right\}
\end{align*}
as well as local statistics for representation counts. In this setting, even the correct analogue of the ``feasible interval" may be more delicate, and uniform enumeration of solutions to $P(\mathbf{x})=t$ typically becomes a genuinely arithmetic problem.  A basic question is whether one should still expect sharp threshold phenomena (and possibly multiple scales) for coverage and for local multiplicity, or whether new behavior emerges.

\subsubsection{Other ambient groups.}

A related family of questions is obtained by changing the ambient group from $\Z$ to some other setting. For example, one could take a sequence of finite subgroups $G_N$ inside an abelian group $\left( G, + \right)$ or define a natural sequence of finite groups $\left( G_N, + \right)$, sample $A \subseteq G_N$ from a product measure, then study the image set
\begin{align*}
    L(A) = \left\{ L(a_1, \dots, a_h) : a_i\in A \right\} \subseteq G
\end{align*}
for a linear form $L: G^h \to G$. Even in we work in the cyclic group $G_N = \mathbb{Z}/N\mathbb{Z}$, which seems the most natural generalization of our work to a different setting, the absence of ``fringes" and the presence of wrap-around effects suggest that the complement behavior of $L(A)$ may have a different shape than in $\mathbb{Z}$. It would be interesting to understand which aspects of the present threshold picture are intrinsically one-dimensional/boundary-driven, and which are robust across ambient groups.

\subsection{Point Process Limits}

A closely related question is whether there is some sort of meaningful point process limit for the local representation landscape. Specifically, we define the dependent Bernoulli process $\Xi_k$ via, where $\delta_\Lambda$ denotes the unit point mass at $\Lambda$,\footnote{We may construct this dependent Bernoulli process by enumerating the $L$-expressions in $\mathscr{E}_k$ by $\Lambda_1, \dots, \Lambda_{\left|\mathscr{E}_k\right|}$ and constructing a map $\Xi_k: \big(\Omega_N, 2^{\Omega_N}, \Prob_N\big) \times ([0,1], \mathcal B_{[0,1]}) \to \left( \R, \mathcal B\right)$ by
\begin{align*}
    \Xi_k(\omega, B) := \sum_{i=1}^{\left| \mathscr{E}_k \right|} I_{\Lambda_i}(\omega) \delta_{i/| \mathscr{E}_k |}(B),
\end{align*}
where $I_{\Lambda_i}$ denotes the indicator for the event $S(\Lambda_i) \subseteq A$. The intensity $\pi$ of this point process is then a measure on $\mathcal B_{[0,1]}$ which is defined by, for $B \in \mathcal B_{[0,1]}$,
\begin{align*}
    \pi_k(B) = \sum_{i=1}^{\left| \mathscr{E}_k \right|} p_{\Lambda_i} \delta_{i/| \mathscr{E}_k |}(B).
\end{align*}
The corresponding Poisson point process with intensity $\pi$ is then defined accordingly on $\big(\Omega_N, 2^{\Omega_N}, \Prob_N\big) \times ([0,1], \mathcal B_{[0,1]})$.}
\begin{align*}
    \Xi_k := \sum_{\Lambda \in \mathscr{E}_k} I_\Lambda \delta_\Lambda
\end{align*}
denote a corresponding dependent Bernoulli process with intensity $\pi_k$, which assigns a weight of $p_\Lambda$ to $\Lambda$. Lifting to the point process level, our techniques yield the following result.
\begin{corollary} \label{cor:poisson_process}
    Assume the setup of \cref{thm:Z_linear_forms}. Let $C > 0$ be a constant. The following two situations arise.
    \begin{corparts}
        \item \label{cor:poisson_process_i}

        If $p(N) \ll N^{-\frac{h-1}{h}}$, then $d_\TV\left(\mathcal L(\Xi_k), \Pois(\pi_k) \right) \ll 1$ uniformly over $k \in \left[ 0, mN \right]$.

        \item \label{cor:poisson_process_ii}

        If $p(N) \gtrsim N^{-\frac{h-2}{h-1}}$, then $d_\TV\left(\mathcal L(\Xi_k), \Pois(\pi_k) \right) = \Omega(1)$ uniformly over $k \in \left[CN, (m-C)N\right]$.
    \end{corparts}
\end{corollary}

\begin{proof}
    \cref{cor:poisson_process_i} quickly follows from \cref{thm:poisson_process_convergence} since if we have that $p(N) \ll N^{-\frac{h-1}{h}}$, then
    \begin{align*}
        d_\TV\left(\mathcal L(\Xi_k), \Pois(\pi_k) \right) \leq b_1(k) + b_2(k) & \stackrel{\eqref{eq:b_1(k)_bound}}{\lesssim} N^{h-1}p^{2h} + \sum_{\ell=h}^{2h-1} N^{\ell-2}p^\ell \lesssim N^{2h-3}p^{2h-1} \ll 1
    \end{align*}
    if $k \in (0, mN]$. This asymptotic readily follows for $k = 0$ via bounding $b_1(0) + b_2(0)$ since $|\mathscr{E}_0| \leq 1$. On the other hand, \cref{cor:poisson_process_ii} follows since 
    \begin{align*}
        d_\TV\left(\mathcal L(\Xi_k), \Pois(\pi_k) \right) \geq d_\TV\left( \mathcal L(W_k), \Pois(\mu_k) \right),
    \end{align*}
    establishing \cref{cor:poisson_process_ii}.
\end{proof}

Concretely, \cref{cor:poisson_process} identifies a Poisson point process regime for the dependent Bernoulli process $\Xi_k$ when $p$ is sufficiently small, and it also shows that Poisson behavior fails macroscopically throughout the bulk once $p$ is sufficiently large. However, it does not address the intermediate range of densities between these behaviors, nor does it address the nature of the transition itself. A natural problem, therefore, is to determine whether $\Xi_k$ admits a nontrivial limiting description in this intermediate regime, and if so, to identify the limiting objects governing the representation landscape.

\section*{Acknowledgments}

This research was supported, in part, by NSF Grants DMS1947438 and DMS2241623. The first author thanks Professor Robin Pemantle for a very helpful conversation on the problem. Finally, we thank Nathan Tung for reading a draft of this article and for many helpful comments which improved the exposition.

\appendix

\section{Standard Asymptotic Notation} \label{sec:asymptotic_notation}

Let $X = X_N$ be a real-valued random variable depending on a positive integer parameter $N$, and let $E = E_N$ be an event depending on $N$. Let $f(N)$ and $g(N)$ be deterministic, positive real-valued functions. We employ the following standard notation.
\begin{itemize}
    \item We write $X \sim f(N)$ to mean that $X/f(N) \xrightarrow{p} 1$ as $N\to\infty$.

    \item We write $f(N) \sim g(N)$ if $\lim_{N\to\infty} f(N)/g(N)=1$.

    \item We write $f(N) \lesssim g(N)$ (respectively $f(N) \gtrsim g(N)$) if there exists a constant $C>0$ such that $f(N) \leq Cg(N)$ (respectively $f(N) \geq Cg(N)$) for all sufficiently large $N$. The statements $f(N)=O(g(N))$ and $f(N)=\Omega(g(N))$ have the same respective meanings. Finally, $f(N)=\Theta(g(N))$, or equivalently $f(N) \asymp g(N)$, means both $f(N)=O(g(N))$ and $g(N)=O(f(N))$.

    \item We write $f(N)=o(g(N))$ (equivalently, $f(N)\ll g(N)$) means $\lim_{N\to\infty} f(N)/g(N)=0$, and $f(N)=\omega(g(N))$ (equivalently, $f(N)\gg g(N)$) means $g(N)=o(f(N))$.

    \item We say that $E$ holds \emph{with high probability} if
    \begin{align*}
        \Prob(E) \to 1.
    \end{align*}
    as $N \to \infty$. Equivalently, $E$ holds with high probability if $E$ fails with probability $o(1)$.
\end{itemize}

\section{Proofs for \texorpdfstring{\cref{sec:preliminaries}}{Section \ref{sec:preliminaries}}} \label{sec:preliminaries_proofs}

We work towards a proof of \cref{lem:number_of_subsets_asymptotics}. We begin with the following standard definitions.

\begin{definition}
    A \textit{partition with $h$ parts} of a positive integer $k$ is a finite nonincreasing sequence of $h$ positive integers $\lambda_1 \geq \cdots \geq \lambda_h \geq 1$ such that $\sum_{i=1}^h \lambda_i = k$. A \textit{weak composition with $h$ parts} is an $h$-tuple $(\lambda_1, \dots, \lambda_h)$ of nonnegative integers such that $\sum_{i=1}^h \lambda_i = k$.
\end{definition}

\begin{definition}
    A sequence $a_0, a_1, \dots, a_n$ of real numbers is \textit{unimodal} if there exists an index $0 \leq j \leq n$ for which it holds that
    \begin{align*}
        a_0 \leq a_1 \leq \cdots \leq a_j \geq a_{j+1} \geq \cdots \geq a_n.
    \end{align*}
    The sequence is \textit{symmetric} if $a_i = a_{n-i}$ for all $0 \leq i \leq n$.
\end{definition}
For later use, we observe that a sequence $a_0, a_1, \dots, a_n$ that is both unimodal and symmetric must have that $a_{\lfloor n/2 \rfloor} = a_{\lceil n/2 \rceil}$, and that this value must be the maximum of the sequence. 

We use the following standard notation. For a formal power series 
\begin{align*}
    F(q) = \sum_{m\ge 0} a_m q^m,
\end{align*}
we write $[q^m]F(q) := a_m$ for the coefficient of $q^m$. For integers $n \geq 0$ and $0 \leq r \leq n$, we define the Gaussian binomial coefficient $\binom{n}{r}_q$ by
\begin{align*}
    \binom{n}{r}_q := \prod_{i=1}^{r}\frac{1-q^{n-r+i}}{1-q^{i}},
\end{align*}
which is well-known to be a polynomial in $q$ with nonnegative integer coefficients.

We now record some auxiliary results that will be used in the proofs to follow.

\begin{theorem}[{\cite[Theorem 2.4]{stanley2016some}}] \label{thm:stanley_zanello}
    Fix a real number $\alpha \geq 0$ and let $h$ be a positive integer. Then
    \begin{align*}
        \left[ q^{\lfloor \alpha k \rfloor}\right]\binom{k+h}{h}_q = \frac{C(\alpha, h)k^{h-1}}{(h-1)!h!} + O(k^{h-2}),
    \end{align*}
    for $k \to \infty$, where $C(\alpha, h)$ is the Euler-Frobenius number
    \begin{align} \label{eq:euler_frob_num}
        C(\alpha, h) := \sum_{i=0}^{\lfloor \alpha \rfloor} (-1)^i \binom{h}{i} (\alpha - i)^{h-i}.
    \end{align}
\end{theorem}

\begin{theorem}[{\cite[Theorem 3.2, Remarks 3.3 and 3.4]{janson2013euler}}] \label{thm:janson_euler_frobenius}
    For a real number $\alpha \geq 0$ and a positive integer $h$, it holds that
    \begin{align*}
        \IH_h(\alpha) = \frac{C(\alpha, h)}{(h-1)!}.
    \end{align*}
    Furthermore, for $h \geq 2$, it holds that
    \begin{align} \label{eq:IH_spline_identity}
        \IH_h(\alpha) = \frac{1}{h-1}\left( \alpha \IH_{h-1}(\alpha) + (h-\alpha)\IH_{h-1}(\alpha-1) \right).
    \end{align}
\end{theorem}

We are now ready to assemble the proof of \cref{lem:number_of_subsets_asymptotics}, beginning with a few intermediate lemmas.

\begin{lemma} \label{lem:partition_asymptotics}
    Fix a function $K: \N \to \N$ satisfying $1 \ll K(N) \ll N$. For $k \in I_{hN}$, let $p_{h,N}(k)$ denote the number of partitions of $k$ into at most $h$ parts, each at most $N$. Uniformly over $k \in [K(N), hN - K(N)]$,
    \begin{align} \label{eq:partition_asymptotics_desired}
        p_{h,N}(k) \sim \frac{\IH_h\left(k/N\right)}{h!}N^{h-1}.
    \end{align}
\end{lemma}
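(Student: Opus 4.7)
The plan is to reduce the problem to a cleaner count of weak compositions. Let $W_h^*(k,N)$ denote the number of $h$-tuples $(\lambda_1, \dots, \lambda_h) \in \{0, 1, \dots, N\}^h$ with $\sum_i \lambda_i = k$. Grouping these tuples by the underlying multiset of entries gives the identity $W_h^*(k,N) = \sum_\pi h!/\prod_j m_j!$, the sum being over partitions $\pi$ counted by $p_h(k,N)$ (with zeros allowed as parts) and $m_j$ the multiplicity of the value $j$ in $\pi$. Consequently $0 \le h! \, p_h(k,N) - W_h^*(k,N) \le h! \cdot R_h(k,N)$, where $R_h(k,N)$ counts those partitions in which some value appears at least twice. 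It will suffice to show (a) $W_h^*(k,N) \sim \IH_h(k/N)\, N^{h-1}$ and (b) $R_h(k,N) = o\bigl(\IH_h(k/N)\, N^{h-1}\bigr)$, uniformly over $k \in [g(N), hN-g(N)]$. The symmetries $W_h^*(k,N) = W_h^*(hN-k,N)$ (via complementation $\lambda_i \mapsto N-\lambda_i$) and $\IH_h(x) = \IH_h(h-x)$ let me restrict attention to $k \in [g(N), hN/2]$.

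For (a), I would apply inclusion-exclusion to the generating function $\bigl((1-z^{N+1})/(1-z)\bigr)^h$, producing
\[
W_h^*(k,N) = \sum_{j=0}^{\lfloor k/(N+1)\rfloor} (-1)^j \binom{h}{j}\binom{k-j(N+1)+h-1}{h-1},
\]
and compare term-by-term with the representation $\IH_h(k/N)\,N^{h-1} = \frac{1}{(h-1)!}\sum_{j=0}^{\lfloor k/N\rfloor} (-1)^j \binom{h}{j}(k-jN)_+^{h-1}$ read off the definition of $\IH_h$. After cancellation of the leading $(k-jN)^{h-1}/(h-1)!$, each summand contributes an error of size $O\bigl((k-jN+1)^{h-2}\bigr)$, and boundary mismatches between the two truncation limits contribute only $O(1)$. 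Summing over the bounded index $j \le h$ yields $|W_h^*(k,N) - \IH_h(k/N)\,N^{h-1}| = O(k^{h-2})$ when $k \le N$ (where the upper constraint $\lambda_i \le N$ is inactive and only $j=0$ contributes) and $O(N^{h-2})$ in general. Since $\IH_h(k/N)\,N^{h-1} = k^{h-1}/(h-1)!$ exactly for $k \le N$, and $\asymp N^{h-1}$ in the bulk $k \in (N, hN/2]$ (where $\IH_h$ is bounded below), the relative error is $O(1/k)$ or $O(1/N)$, both $o(1)$ as $k \ge g(N) \to \infty$.

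For (b), choosing the pair of positions holding a repeated value $v$ produces $R_h(k,N) \le \binom{h}{2}\sum_{v=0}^{\min(N,\lfloor k/2\rfloor)} W_{h-2}^*(k-2v, N)$. For $h \ge 3$, using $W_{h-2}^*(m,N) \le \binom{m+h-3}{h-3}$ in the fringe and $\le (N+1)^{h-3}$ in the bulk yields $R_h(k,N) = O(k^{h-2})$ when $k \le N$ and $O(N^{h-2})$ generally; for $h=2$, one simply notes $R_2(k,N) \le 1$. The same sub-range comparisons as in (a) then give $R_h(k,N) = o\bigl(\IH_h(k/N)\,N^{h-1}\bigr)$, and combining with (a) delivers the lemma.

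The hard part is the uniformity at the fringe of the range $[g(N), hN-g(N)]$: when $k$ is as small as $g(N)$, the target $\IH_h(k/N)\,N^{h-1}$ is of order $g(N)^{h-1}/(h-1)!$, which a crude $O(N^{h-2})$ error bound would fail to dominate unless $g(N) \ggg N^{(h-2)/(h-1)}$. The saving observation is that for $k \le N$ the upper constraint $\lambda_i \le N$ is inactive, so both $W_h^*$ and $R_h$ reduce to unconstrained counts admitting the sharper $O(k^{h-2})$ estimates used above. This replaces the relative error $O(N^{h-2}/k^{h-1})$ with $O(1/k)$, which is $o(1)$ under the minimal hypothesis $g(N) \to \infty$.
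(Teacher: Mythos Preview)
Your proof is correct and takes a genuinely different route from the paper. The paper appeals to the generating function $\binom{N+h}{h}_q$, invokes Sylvester's unimodality theorem for Gaussian binomials, and then cites external asymptotic results (Stanley, Janson) at finitely many points, filling in the bulk by a continuity-plus-unimodality interpolation. Your argument is more self-contained: you pass to weak compositions, read off an exact inclusion--exclusion formula, match it term-by-term to the explicit Irwin--Hall density, and control the partition--composition discrepancy by the elementary repeated-value bound $R_h(k,N)=O(k^{h-2})$ (resp.\ $O(N^{h-2})$). Both approaches make the same fringe/bulk split at $k=N$ so that the relative error is $O(1/k)$ rather than $O(N^{h-2}/k^{h-1})$. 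Your route avoids the nontrivial unimodality input and the literature lookups, at the cost of a slightly longer hands-on calculation; the paper's route is terser but leans on heavier machinery.
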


\begin{proof}
    We handle the cases $k \notin [N, (h-1)N ]$ and $k \in [N, (h-1)N ]$ separately. It is well known that the sequence
    \begin{align} \label{eq:partition_asymptotics_sequence}
        p_{h,N}(0), \ p_{h,N}(1), \ \dots, \ p_{h,N}(hN-1), \ p_{h,N}(hN)
    \end{align}
    has the Gaussian binomial coefficient $\binom{N+h}{h}_q$ as its generating function (e.g., see \cite[Theorem 3.1]{andrews1998theory}), i.e., it holds for all $k \in I_{hN}$ that
    \begin{align} \label{eq:q_binom_coeff}
        p_{h,N}(k) = [q^k]\binom{N+h}{h}_q.
    \end{align}
    Furthermore, this sequence is both unimodal and symmetric (e.g., see \cite{melczer2020counting, sylvester1878xxv}). For an integer $k \in [K(N), N]$ and a nonnegative real number $\alpha$, we have that
    \begin{align} 
        & p_{h,k}(k) \stackrel{\eqref{eq:q_binom_coeff}}{=} [ q^k ] \binom{k+h}{h}_q \stackrel{\text{(\cref{thm:stanley_zanello})}}{\sim} \frac{k^{h-1}}{(h-1)!h!} \stackrel{\eqref{eq:euler_frob_num},\text{(\cref{thm:janson_euler_frobenius})}}{=} \frac{\IH_h(k/N)}{h!}N^{h-1}; \label{eq:q_binomial_coeff_asymptotics_1} \\
        & \left[ q^{\lfloor \alpha N \rfloor} \right]\binom{N+h}{h}_q \stackrel{\text{(\cref{thm:stanley_zanello,thm:janson_euler_frobenius})}}{\sim} \frac{\IH_h\left( \alpha \right)}{h!}N^{h-1}, \label{eq:q_binomial_coeff_asymptotics_2}
    \end{align}
    where we note that \eqref{eq:q_binomial_coeff_asymptotics_1} holds uniformly over all such $k \in [K(N), N]$. Since any partition of an integer $k \in [K(N), N]$ certainly has parts, each at most $k$, it follows for such $k$ that $p_{h,N}(k)$ is the number of partitions of $k$ into $h$ parts. Since $k \leq N$, this is in turn the number of partitions of $k$ into $h$ parts, each at most $N$, i.e., it holds that $p_{h,k}(k) = p_{h,N}(k)$. Thus, \eqref{eq:q_binomial_coeff_asymptotics_1} implies that \eqref{eq:partition_asymptotics_desired} holds uniformly over all $k \in [K(N), N]$. The symmetry of the sequence \eqref{eq:partition_asymptotics_sequence} and the Irwin-Hall density $\IH_h(x)$ about $h/2$ then imply that \eqref{eq:partition_asymptotics_desired} holds uniformly over all 
    \begin{align} \label{eq:k_fringe_interval}
        k \in (N, (h-1)N)^c \cap [K(N), hN - K(N)].
    \end{align}
    We now turn to values $k \in [N, (h-1)N]$. We observe that $\IH(x)$ is uniformly continuous on $x \in [1,h-1]$ and positive on $x \in (0,h)$; this positivity can be deduced via induction on $h$ and using \eqref{eq:IH_spline_identity}. It thus follows that $\IH_h(k/N)$ has a fixed positive minimum on $k \in [N, (h-1)N]$. From these observations and \eqref{eq:q_binomial_coeff_asymptotics_2}, a standard continuity argument now yields that
    \begin{align} \label{eq:q_binomial_coeff_asymptotics_middle}
        p_{h,N}(k) \stackrel{\eqref{eq:q_binom_coeff}}{=} [ q^k ]\binom{N+h}{h}_q \sim \frac{\IH_h\left( k/N \right)}{h!}N^{h-1} \quad \text{ uniformly over } k \in \left[N, (h-1)N\right].
    \end{align}
    Combining \eqref{eq:q_binomial_coeff_asymptotics_middle} with the uniform validity of \eqref{eq:partition_asymptotics_desired} over values \eqref{eq:k_fringe_interval} proves the lemma.
\end{proof}

\begin{remark} \label{rmk:partition_asymptotics_small_k}
    For $k \leq N$, the expression in \cref{lem:partition_asymptotics} can be written using \eqref{eq:irwin_hall_density} as
    \begin{align*}
        \frac{\IH_h\left(k/N\right)}{h!}N^{h-1} = \frac{\left(k/N\right)^{h-1}}{(h-1)!h!} N^{h-1} = \frac{k^{h-1}}{(h-1)!h!}.
    \end{align*}
    This is consistent with classical results in analytic number theory regarding the number of partitions of a positive integer with a bounded number of parts (e.g., see \cite[Theorem 4.1]{erdos1941distribution}).
\end{remark}

\begin{lemma}\label{lem:ordered_tuples_asymptotics}
    Fix a function $K: \N \to \N$ satisfying $1 \ll K(N) \ll N$, jointly coprime nonzero integers $u_1, \dots, u_h$, and integers $b_1, \dots, b_h$. Let 
    \begin{align*}
        & u := (u_1, \dots, u_h);
        & b := (b_1, \dots, b_h).
    \end{align*}
    For $k \in I_{hN}$, let $c_{h,N}^{u,b}(k)$ denote the number of weak compositions $(a_1, \dots, a_h)$ of $k$ with $h$ parts, each of which is at most $N$, that satisfy
    \begin{align} \label{eq:ordered_tuples_conditions}
        a_i \equiv b_i \pmod{|u_i|} \quad \text{for all } i \in [h].
    \end{align}
    Uniformly over $k \in [K(N), hN-K(N)]$, it holds that
    \begin{align} \label{eq:num_ordered_tuples_asymptotics}
        c_{h,N}^{u,b}(k) \sim \frac{\IH_h(k/N)}{\prod_{i=1}^h |u_i|}N^{h-1}. 
    \end{align}
    Additionally, uniformly over $k_1 < K(N)$ and $k_2 > hN-K(N)$, it holds that
    \begin{align} \label{eq:ordered_tuples_fringes}
        & \IH_h(k_1/N) \ll \IH_h(1+k_1/N);
        & \IH_h(k_2/N) \ll \IH_h(1-k_2/N).
    \end{align}
\end{lemma}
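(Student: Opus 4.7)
The strategy is to absorb the congruence conditions via an affine substitution, reducing the count to non-negative integer solutions of a weighted linear equation in a box, then to apply the classical denumerant asymptotic through an inclusion--exclusion on the box constraints.

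Concretely, let $r_i \in \{0, \dots, |u_i|-1\}$ be the unique residue satisfying $r_i \equiv b_i \pmod{|u_i|}$, and substitute $a_i = |u_i| y_i + r_i$. The conditions translate to $y_i \in [0, M_i]$ with $M_i := \lfloor (N-r_i)/|u_i| \rfloor$ and $\sum_{i=1}^h |u_i| y_i = k - R$, where $R := \sum_{i=1}^h r_i$. Consequently,
$$c_{h,u,b}(k, N) = [q^{k-R}] \prod_{i=1}^h \frac{1 - q^{|u_i|(M_i+1)}}{1 - q^{|u_i|}} = \sum_{S \subseteq [h]} (-1)^{|S|} D_u\Big(k - R - \sum_{i \in S} |u_i|(M_i+1)\Big),$$
where $D_u(m) := [q^m] \prod_{i=1}^h (1 - q^{|u_i|})^{-1}$ is the denumerant of $(|u_1|, \dots, |u_h|)$. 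Since $\gcd(u_1, \dots, u_h) = 1$, a partial-fraction expansion at the main pole $q = 1$ yields the classical asymptotic $D_u(m) = m^{h-1}/((h-1)! \prod_{i=1}^h |u_i|) + O(m^{h-2})$ as $m \to \infty$.

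Substituting this and using $|u_i|(M_i+1) = N + O(1)$, the inclusion--exclusion collapses, via the identity $(h-1)! \IH_h(x) = \sum_{j=0}^h (-1)^j \binom{h}{j}(x-j)^{h-1}_+$ from \eqref{eq:irwin_hall_density}, to a main term $N^{h-1} \IH_h(k/N) / \prod_{i=1}^h |u_i|$ plus an $O(N^{h-2})$ error. The main obstacle is uniformity near the fringes, where $\IH_h(k/N) \asymp (g(N)/N)^{h-1}$ may render the error non-negligible. To finesse this, I would observe that when $k \leq N$ the upper bound $a_i \leq N$ is automatic (since $a_i \leq k$), so only the $S = \emptyset$ term survives and $c_{h,u,b}(k, N) = D_u(k - R)$; the denumerant asymptotic alone then yields $c_{h,u,b}(k, N) \sim k^{h-1}/((h-1)! \prod_{i=1}^h |u_i|)$, which matches the claimed main term via the closed form $\IH_h(x) = x^{h-1}/(h-1)!$ on $[0, 1]$. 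The regime $k \in [(h-1)N, hN - g(N)]$ is dispatched by the substitution $a_i \mapsto N - a_i$, which reduces it to the previous case with $(k, b_i)$ replaced by $(hN - k, N - b_i \bmod |u_i|)$, while the bulk $k \in [N, (h-1)N]$ is handled directly by the inclusion--exclusion since $\IH_h(k/N)$ is bounded below there.

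Finally, \eqref{eq:ordered_tuples_fringes} follows from $\IH_h(x) = x^{h-1}/(h-1)!$ on $[0, 1]$: for $k_1 < g(N)$, we have $\IH_h(k_1/N) \leq (g(N)/N)^{h-1}/(h-1)! \to 0$ uniformly, while $\IH_h(1 + k_1/N)$ is bounded below by a positive constant since $\IH_h$ is continuous and positive at $x = 1$. The analogous statement at $k_2 \approx hN$ follows by applying the symmetry $\IH_h(x) = \IH_h(h - x)$ to reduce to the first estimate.
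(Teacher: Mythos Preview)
Your argument is correct and takes a genuinely different route from the paper's. The paper proceeds by induction on $h$: it first establishes the unconstrained count $c_h(k,N) \sim \IH_h(k/N)N^{h-1}$ from Lemma~\ref{lem:partition_asymptotics}, then peels off one coordinate at a time, using unimodality and symmetry of the sequence $c_{h-1}(\cdot,N)$ together with a continuity argument to show that the congruence constraint on $a_1$ thins the count by a factor of $1/|u_1|$, and appeals to the induction hypothesis for the remaining $h-1$ coordinates. Your approach instead absorbs all congruence constraints at once via the substitution $a_i = |u_i|y_i + r_i$, reduces to a denumerant with box constraints, and recovers the Irwin--Hall formula directly from the inclusion--exclusion and the classical asymptotic $D_u(m) = m^{h-1}/((h-1)!\prod_i|u_i|) + O(m^{h-2})$.

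Your route is more self-contained: it does not rely on Lemma~\ref{lem:partition_asymptotics}, and the appearance of $\IH_h$ is transparent, emerging exactly from the inclusion--exclusion matching the alternating sum in~\eqref{eq:irwin_hall_density}. The paper's route avoids invoking the denumerant asymptotic (partial fractions at roots of unity) and instead leans on softer unimodality/continuity facts, at the cost of a somewhat delicate inductive bookkeeping. Your handling of uniformity near the fringes---observing that for $k \le N$ the box constraints are vacuous so that $c_{h,u,b}(k,N) = D_u(k-R)$ exactly, and then using the reflection $a_i \mapsto N-a_i$ for $k \ge (h-1)N$---is the same idea the paper uses, though the paper frames it via the symmetry of the sequence~\eqref{eq:weak_compositions_sequence}. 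Both arguments for~\eqref{eq:ordered_tuples_fringes} are the same elementary observation about $\IH_h$ on $[0,1]$ and its symmetry.
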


\begin{proof}
    Let $c_{h,N}(k)$ denote the number of weak compositions $(a_1, \dots, a_h)$ of $k$ with $h$ parts, each of which is at most $N$. It is known (e.g., see \cite[Theorem 3]{zhong2022combinatorial}) that the sequence
    \begin{align} \label{eq:ordered_tuples_sequence}
        c_{h,N}(0), \ c_{h,N}(1), \ \dots, \ c_{h,N}(hN)
    \end{align}
    is unimodal and symmetric. It is readily observed that $O(k^{h-2})$ partitions of $k$ into at most $h$ parts, each at most $N$, are such that its parts are not pairwise distinct. Additionally, there are $O(k^{h-2})$ weak compositions of $k$ into $h$ parts with a zero part. Thus, \cref{lem:partition_asymptotics} implies that uniformly over $k \in [K(N), hN/2]$,
    \begin{align} \label{eq:unconstrained_compositions_asymptotics}
        c_{h,N}(k) \sim \IH_h(k/N) N^{h-1} = \Omega( k^{h-1} ).
    \end{align}
    In particular, for $k \leq N$ the lower bound \eqref{eq:unconstrained_compositions_asymptotics} follows from \cref{rmk:partition_asymptotics_small_k}. For $k \in [N,hN/2]$, we have $k/N \in [1,h/2]$. Since $\IH_h(x)$ is continuous and strictly positive on $x \in [1, h/2]$, it attains a minimum $c_* > 0$ on this interval. From here, it holds uniformly for $k \in [N,hN/2]$ that
    \begin{align*}
        c_{h,N}(k) \geq p_{h,N}(k) \gtrsim c_* \cdot N^{h-1} = \Omega(k^{h-1}).
    \end{align*}
    Since $c_{h,N}(k)$ and $\IH_h(x) = \IH_h(h-x)$, we extend \eqref{eq:unconstrained_compositions_asymptotics} uniformly to $k \in [K(N), hN-K(N)]$.

    In what follows, we deviate from our usual practice of assuming a fixed $h \geq 2$ and we prove \eqref{eq:num_ordered_tuples_asymptotics} via induction on $h \geq 2$. We begin with the induction basis $h=2$. For all $k \in [K(N), 2N-K(N)]$, the values of $a_1$ admitting a (unique) weak composition $(a_1, a_2)$ of $k$ with parts at most $N$ comprise an interval with at least $K(N) \gg 1$ elements. The two congruences in \eqref{eq:ordered_tuples_conditions} become
    \begin{align*}
        & a_1 \equiv b_1 \pmod{|u_1|};
        & k-a_1 \equiv b_2 \pmod{|u_2|} \iff a_1 \equiv k-b_2 \pmod{|u_2|}.
    \end{align*}
    Since $\gcd(u_1,u_2) = 1$, the Chinese Remainder Theorem implies that $\sim 1/|u_1u_2|$ such weak compositions $(a_1, a_2)$ satisfy \eqref{eq:ordered_tuples_conditions} uniformly over such $k$. Combined with \eqref{eq:unconstrained_compositions_asymptotics}, this establishes \eqref{eq:num_ordered_tuples_asymptotics} for $h = 2$.

    We proceed to the inductive step by considering $h \geq 3$. Let $\Tilde{K}: \N \to \N$ be a function satisfying 
    \begin{align*}
        1 \ll \Tilde{K}(N) \ll K(N).
    \end{align*}
    We can reformulate the condition that $(a_1, \dots, a_h)$ is a weak composition of $k$ with $h$ parts via
    \begin{align} \label{eq:ordered_tuples_induction}
        \sum_{i=1}^h a_i = k \iff \sum_{i=2}^h a_i = k - a_1.
    \end{align}
    Fix $k \in [K(N), hN/2]$. The values $a_1$ admitting $a_2, \dots, a_h \in I_N$ for which \eqref{eq:ordered_tuples_induction} holds form an interval $J_k \subseteq I_N$ of length at least $K(N)$. Let $\mathscr{J}_k$ be the subinterval obtained by compressing both endpoints of $J_k$ by $\Tilde{K}(N)$. Then $|\mathscr{J}_k| \gg 1$ and, for all $a_1 \in \mathscr{J}_k$,
    \begin{align*}
        k-a_1 \in [\Tilde{K}(N), hN/2 - \Tilde{K}(N)].
    \end{align*}
    Every $|u_1|$\textsuperscript{th} term $a_1 \in \mathscr{J}_k$ satisfies \eqref{eq:ordered_tuples_conditions} for $i=1$. Each such $a_1$ admits $c_{h-1,N}(k-a_1)$ weak compositions of $k$ with $h$ parts, each at most $N$. From \eqref{eq:ordered_tuples_sequence}, we observe the unimodality and symmetry of the sequence
    \begin{align} \label{eq:compositions_induction_sequence}
        c_{h-1,N}(\Tilde{K}(N)), \dots, c_{h-1,N}(hN-\Tilde{K}(N)),
    \end{align}
    and the asymptotics of the values \eqref{eq:compositions_induction_sequence} are uniformly described by \eqref{eq:unconstrained_compositions_asymptotics} with $\Tilde{K}(N)$ playing the role of $K(N)$. Altogether, we thus deduce that $\sim 1/|u_1|$ of the weak compositions $(a_1, \dots, a_h)$ satisfying \eqref{eq:ordered_tuples_induction} and with $a_1 \in \mathscr{J}_k$ also satisfy \eqref{eq:ordered_tuples_conditions} for $a_1$. Furthermore, the induction hypothesis and \eqref{eq:unconstrained_compositions_asymptotics} together imply that uniformly over all $a_1 \in \mathscr{J}_k$, it holds that $\sim 1/\prod_{i=2}^h|u_i|$ of the weak compositions $(a_2, \dots, a_h)$ of $k-a_1$ with $h-1$ parts, each at most $N$, satisfy \eqref{eq:ordered_tuples_conditions}. We conclude that uniformly over $k \in [K(N), hN/2]$,
    \begin{align} \label{eq:compositions_desired_1}
        c_{h,N}^{u,b}(k) \sim \frac{\IH_h(k/N)}{\prod_{i=1}^h |u_i|} N^{h-1} = \Omega(k^{h-1}).
    \end{align}
    In particular, since $\Tilde{K}(N) \ll K(N) \leq k$, it follows that the number of weak compositions $(a_1, \dots, a_h)$ of $k$ with $h$ parts, each at most $N$, for which $a_1 \notin \mathscr{J}_k$ is $o(k^{h-1})$.
    
    To extend \eqref{eq:compositions_desired_1} to values $k \in [hN/2, hN-K(N)]$, we define $b'$ via $b_i' \equiv N-b_i \pmod{|u_i|}$ and we set $k' := hN-k \in [K(N),hN/2]$. The involution 
    \begin{align*}
        (a_1,\dots,a_h)\mapsto (N-a_1,\dots,N-a_h)
    \end{align*}
    readily implies that
    \begin{align*}
        c^{u,b}_{h,N}(k)=c^{u,b'}_{h,N}(k').
    \end{align*}
    Noting that the asymptotic equivalence \eqref{eq:compositions_desired_1} is independent of the choice of $b$, applying \eqref{eq:compositions_desired_1} at $k'$ together with the symmetry $\IH_h(x) = \IH_h(h-x)$ yields the claim for all $k \in [K(N), hN-K(N)]$.

    On the other hand, \eqref{eq:ordered_tuples_fringes} can be seen to hold from an elementary continuity argument.
\end{proof}

We are now ready to prove \cref{lem:number_of_subsets_asymptotics}.

\begin{proof}[Proof of \cref{lem:number_of_subsets_asymptotics}]
    We fix $k \in [0, mN/2)$. The fact that $|\mathscr{E}_k| = |\mathscr{E}_{mN-k}|$ follows quickly from
    \begin{align} \label{eq:number_of_subsets_left_right}
        L\left(a_1, \dots, a_h\right) = -dN+k  \iff L\left(N-a_1,\dots,N-a_h\right) = -dN+(mN-k) = sN-k,
    \end{align}
    which naturally lends itself to a bijection between $\mathscr{E}_k$ and $\mathscr{E}_{mN-k}$. We now prove \eqref{eq:number_of_subsets_fringe_asymptotics} and \eqref{eq:number_of_subsets_asymptotics} for $|\mathscr{E}_k|$, after which we are done. Rearranging the LHS of \eqref{eq:number_of_subsets_left_right} gives the equivalent equation
    \begin{align} \label{eq:number_of_subsets_1}
       \sum_{i=1}^{h_{\text{pos}}} u_ia_i + \sum_{i=h_{\text{pos}}+1}^h |u_i| \left(N-a_i \right) = k.
    \end{align}
    For values $a_1, \dots, a_h \in I_N$, \eqref{eq:number_of_subsets_1} is equivalent to 
    the statement that
    \begin{align} \label{eq:desired_rearranged}
        \left(u_1a_1, \dots, u_{h_{\text{pos}}}a_{h_{\text{pos}}}, |u_{h_{\text{pos}}+1}|(N-a_{h_{\text{pos}}+1}), \dots, |u_h|(N-a_h)\right)
    \end{align}
    forms a weak composition of $k$. For any $k \in [0, K(N)]$, there are at most 
    \begin{align*}
        K(N)^{h-1} \ll N^{h-1}
    \end{align*}
    weak compositions with $h$ parts of $k$, so \eqref{eq:number_of_subsets_fringe_asymptotics} follows. We now fix $k \in [K(N), mN/2)$, and we prove \eqref{eq:number_of_subsets_asymptotics} by counting the number of $h$-tuples $(a_1, \dots, a_h)$ in $I_N^h$ for which 
    \eqref{eq:desired_rearranged} forms a weak composition of $k$. We then argue that for essentially all such $h$-tuples, all $h$ parts are distinct. Dividing our initial count by $|\Aut(L)|$ to correct for overcounting (resulting from counting $h$-tuples rather than sets) then yields an asymptotic expression for $|\mathscr{E}_k|$. All of this will be done via an argument which applies uniformly over values $k \in [K(N), mN/2)$.
    
    We begin with the initial count. We partition the collection of $h$-tuples $(a_1, \dots, a_h) \in I_N^h$ for which \eqref{eq:desired_rearranged} is a weak composition of $k$ into classes based on the realization of the $h$-tuple
    \begin{align} \label{eq:class_inclusion}
        \left(\lfloor u_1a_1/N \rfloor, \dots, \lfloor u_{h_{\text{pos}}}a_{h_{\text{pos}}}/N \rfloor, \lfloor |u_{h_{\text{pos}}+1}|(N-a_{h_{\text{pos}}+1})/N \rfloor, \dots, \lfloor |u_h|(N-a_h)/N \rfloor \right).
    \end{align}
    We set $t_i \in I_{|u_i|-1}$ for $i \in [h]$, so that those $h$-tuples $(a_1, \dots, a_h)$ in the class where the $h$-tuple \eqref{eq:class_inclusion} is equal to $(t_1, \dots, t_h)$ are exactly those which satisfy the inequalities
    \begin{align} \label{eq:class_ineqs}
        & 0 \leq u_ia_i - t_iN \leq N-1 \text{ for } i \in [h_{\text{pos}}],
        & 0 \leq |u_i|(N-a_i) - t_iN \leq N-1 \text{ for } i > h_{\text{pos}}.
    \end{align}
    This suggests, for all such $h$-tuples $(a_1, \dots, a_h)$, the equivalent reformulation of \eqref{eq:number_of_subsets_1} via
    \begin{align} \label{eq:reformulation}
         \sum_{i=1}^{h_{\text{pos}}} \left( u_ia_i - t_iN \right) + \sum_{i=h_{\text{pos}}+1}^h \left( |u_i| \left(N-a_i \right) - t_iN \right) = k - \left(\sum_{i=1}^h t_i\right)N.
    \end{align}
    From \eqref{eq:class_ineqs} and \eqref{eq:reformulation}, we deduce that the number of $h$-tuples $(a_1, \dots, a_h)$ in the class where the $h$-tuple \eqref{eq:class_inclusion} is equal to $(t_1, \dots, t_h)$ is exactly the number of weak compositions of $k-\left(\sum_{i=1}^h t_i\right)N$ with $h$ parts, each at most $N-1$, and satisfying \eqref{eq:ordered_tuples_conditions} for the vectors
    \begin{align*}
        & u = ( u_1, \dots, u_{h_{\text{pos}}}, |u_{h_{\text{pos}}+1}|, \dots, |u_h| );
        & b = \left( -t_1N, \dots, -t_hN \right).
    \end{align*}
    Invoking \cref{lem:ordered_tuples_asymptotics}, whenever 
    \begin{align} \label{eq:weak_composition_class}
        k/N - \sum_{i=1}^h t_i \in [K(N), hN-K(N)],
    \end{align}
    it follows that the number of such weak compositions is
    \begin{align*}
        \sim \frac{\IH_h\left(k/N - \sum_{i=1}^h t_i\right)}{\prod_{i=1}^h |u_i|} N^{h-1}.
    \end{align*}
    Summing over all choices of $t_i \in I_{|u_i|-1}$ for $i \in [h]$ yields that the number of $h$-tuples $(a_1, \dots, a_h) \in I_N^h$ for which \eqref{eq:desired_rearranged} is a weak composition of $k$ is
    \begin{align} \label{eq:ordered_seq_count}
        \sim \frac{\sum_{t_1=0}^{|u_1|-1} \cdots \sum_{t_h=0}^{|u_h|-1} \IH_h\left(k/N - \sum_{i=1}^h t_i\right)}{\prod_{i=1}^h |u_i|} N^{h-1} \stackrel{\text{(\cref{rmk:partition_asymptotics_small_k})}}{=} \Omega( k^{h-1} ).
    \end{align}
    In particular, the number of $h$-tuples $(a_1, \dots, a_h)$ for which $a_i \in \{0, N\}$ for some $i \in [h]$ is $O(k^{h-2})$, and we have used \eqref{eq:ordered_tuples_fringes} to control those summands in \eqref{eq:ordered_seq_count} for which \eqref{eq:weak_composition_class} fails. Furthermore, the number of these $h$-tuples $(a_1, \dots, a_h)$ for which the $h$ parts are not pairwise distinct is also observed from \eqref{eq:number_of_subsets_left_right} to be $O(k^{h-2})$. Indeed, each of $O(1)$ ``collision patterns" collapses $L$ to a linear form $L'$ in $r \leq h-1$ variables, contributing $O(k^{r-1}) = O(k^{h-2})$ solutions uniformly in $k$.\footnote{While $L'$ may have some zero coefficients, the only genuinely problematic collapse is when $L' \equiv 0$, which can yield $\Omega(N)$ solutions at the midpoint level $k=mN/2$ when $h=2$. This corresponds precisely to the exceptional setting of \cref{lem:L_expression_tuples_lower_bounds_i}. This is the reason for excluding $k=mN/2$ here.} Combining this observation with \eqref{eq:ordered_seq_count} and dividing the asymptotic expression in \eqref{eq:ordered_seq_count} by $|\Aut(L)|$, we conclude that \eqref{eq:number_of_subsets_asymptotics} holds uniformly over $k \in [K(N), mN/2)$.
\end{proof}

We now prove \cref{lem:L_expression_tuples_lower_bounds,lem:L_expressions_tuples_upper_bounds}.

\begin{proof}[Proof of \cref{lem:L_expression_tuples_lower_bounds}]
    Since $\gcd(u_1, \dots, u_h) = 1$, there exists a solution to the Diophantine equation 
    \begin{align} \label{eq:L_expression_num_elmts}
        L(a_1, \dots, a_h) = -dN+k.
    \end{align}
    For $k$ large enough, \eqref{eq:L_expression_num_elmts} also has solutions with $(a_1, \dots, a_h) \in I_N^h$, observable by adding appropriate multiples of $\lcm(u_1, \dots, u_h)$ to the entries of a particular solution $(a_1, \dots, a_h)$ of \eqref{eq:L_expression_num_elmts} to form new solutions to this Diophantine equation. It follows this way that uniformly over all large enough $k \leq mN/2$, we may form $\Omega(k^{h-1})$ solutions to \eqref{eq:L_expression_num_elmts} such that $a_1, \dots, a_h \in I_N$. (Here, $k$ must be large enough so that there indeed exists a solution to \eqref{eq:L_expression_num_elmts} which uses just elements in $I_N$.) Unless we are working in the exceptional setting in which $h = 2$, $L(x_1, x_2) = x_1 - x_2$, and $k = N$, there are $O(k^{h-2})$ solutions with a repeated coordinate. Indeed, as in the proof of \cref{lem:number_of_subsets_asymptotics}, each collision pattern collapses to a form in fewer than $h$ variables. Altogether, we deduce that there remain $\Omega(k^{h-1})$ injective solutions $(a_1,\dots,a_h)\in I_N^{\underline h}$. Since there are $O(1)$ such solutions in each orbit of $\mathscr{E}_k$, this proves \cref{lem:L_expression_tuples_lower_bounds_i}.

    We now turn to proving \cref{lem:L_expression_tuples_lower_bounds_ii}. We derive this lower bound for the number of ordered pairs
    \begin{align} \label{eq:ordered_tuples}
        \left( (c, a_2, \dots, a_h), (c, b_2, \dots, b_h) \right) \in (I_N^{\underline h})^2
    \end{align}
    that satisfy the conditions 
    \begin{align} \label{eq:lower_bound_tuple_constraints}
        & |\{c, a_2, \dots, a_h, b_2, \dots, b_h\}| = 2h-1;
        & L(c, a_2, \dots, a_h) = L(c, b_2, \dots, b_h) = -dN + k.
    \end{align}
    This suffices since passing these ordered pairs of $h$-tuples to ordered pairs of $\mathscr R$-orbits loses only a constant factor. We first consider all $h$-tuples \eqref{eq:ordered_tuples} satisfying the latter condition of \eqref{eq:lower_bound_tuple_constraints}. Arguing constructively as in the proof of \cref{lem:L_expression_tuples_lower_bounds_i}, we have for large $k$ that there are $\Omega(k)$ choices for the shared value $c$ such that $|u_1|c \leq k/2$ and such that the resulting Diophantine equations
    \begin{align*}
        \sum_{i=2}^h u_ia_i = \sum_{j=2}^h u_jb_j = -dN + k - u_1c
    \end{align*}
    have solutions and do not reduce to the exceptional setting of \cref{lem:L_expression_tuples_lower_bounds_i}. (Indeed, in the $h = 3$ setting, at most one such choice of the shared value $c$ may reduce these equations to the exceptional setting of \cref{lem:L_expression_tuples_lower_bounds_i}.) Again arguing as in the proof of \cref{lem:L_expression_tuples_lower_bounds_i} (here applied to the induced $(h-1)$-variable linear form $\sum_{i=2}^h u_i x_i$), we may generate 
    \begin{align*}
        \underbrace{\Omega(k)}_{\text{choice of shared value } c} \cdot \underbrace{\Omega(k^{h-2})^2}_{\text{choice of } a_2, \dots, a_h, b_2, \dots, b_h} = \Omega(k^{2h-3})
    \end{align*} 
    such ordered pairs uniformly over large $k \leq mN/2$. We now enforce the former condition of \eqref{eq:lower_bound_tuple_constraints}. Choosing one of $O(1)$ further constraints of the form $a_i = b_j$ for $i,j \neq 1$ and bounding the number of pairs \eqref{eq:ordered_tuples} satisfying this further constraint, we observe that the number of these ordered pairs which do not satisfy the former condition of \eqref{eq:lower_bound_tuple_constraints} is 
    \begin{align*}
        \underbrace{O(k)^2}_{\text{choice of shared values }c, \ a_i = b_j} \cdot \underbrace{O(k^{h-3})^2}_{\text{choice of remaining values}} = O(k^{2h-4}).
    \end{align*}
    The desired lower bound thus follows.
\end{proof}

\begin{proof}[Proof of \cref{lem:L_expressions_tuples_upper_bounds}]
    It suffices to derive these asymptotic upper bounds for the number of matrices
    \begin{align}
        \mathcal A = \left(a_{i,j} \right)_{i \in [t], j \in [h]} \in I_N^{t \times h},
    \end{align}
    where the matrices $\mathcal A$ that we consider are those such that 
    \begin{enumerate}
        \item each row is an injective $h$-tuple;

        \item there are exactly $\ell$ distinct values amongst the entries of $\mathcal A$;

        \item there do not exist distinct $i, i' \in [t]$ and $\sigma \in \mathscr{R}_k$ such that
        \begin{align} \label{eq:L_expressions_tuples_asymptotics_condition_1}
            (a_{i, 1}, \dots, a_{i, h}) = (a_{i', \sigma(1)}, \dots, a_{i', \sigma(h)});
        \end{align}

        \item if $t \geq 2$, then it holds for all $i \in [t]$ that
        \begin{align} \label{eq:L_expressions_tuples_asymptotics_condition_2}
            \{a_{i,1}, \dots, a_{i,h}\} \cap \left( \bigcup_{i' \neq i} \{a_{i',1}, \dots, a_{i',h}\} \right) \neq \emptyset \text{ for all } i \in [t];
        \end{align}

        \item it holds for all $i \in [t]$ that
        \begin{align} \label{eq:L_expressions_tuples_asymptotics_equations_1}
            L\left(a_{i,1}, \dots, a_{i,h}\right) = -dN+k.
        \end{align}
    \end{enumerate}
    Indeed, given a $t$-tuple $\left(\Lambda_1, \dots, \Lambda_t \right)$ satisfying the conditions of \cref{lem:L_expressions_tuples_upper_bounds}, choosing representatives of each orbit and making them the respective rows of a matrix $\mathcal A$ gives an injective map from the $t$-tuples of interest to the collection of all such matrices $\mathcal A$. Now, we let $m_1$ denote the number of distinct values appearing exactly once in $\mathcal A$. Then the remaining $\ell-m_1$ values appear at least twice, so
    \begin{align*}
        th \geq m_1+2(\ell-m_1) = 2\ell-m_1 \implies m_1 \geq 2\ell-th.
    \end{align*}
    Since a row contains at most $h$ singleton values, the number of rows of $\mathcal A$ with a value that is included exactly once in $\mathcal A$ is at least
    \begin{align} \label{eq:lower_bound_multiplicity_1}
        \lceil (2\ell - th)/h \rceil = \lceil 2\ell/ h \rceil-t.
    \end{align}
    We partition the collection of all such matrices $\mathcal A$ based on the $\ell$ subsets of the entries of $\left(a_{i,j} \right)_{i \in [t], j \in [h]}$ that correspond to the $\ell$ distinct values in the matrix $\mathcal A$. We break into cases based on the values of $\ell$ and $t$. The following discussion is to be understood as having fixed one of the blocks of this partition. We will occasionally assume that we are working with a specific fixed block of this partition.

    \medskip
    
    \paragraph{\textbf{Case 1: $t=1$ and $\ell=h$.}} We let $r=1$ and we let $v_1 = a_{1,1}$.

    \medskip

    \paragraph{\textbf{Case 2: $t \geq 2$ and $h \leq \ell \leq 2h-1$.}} We break into two subcases.
    \begin{enumerate}
        \item If there exist values $v_1, v_2$ such that $v_2$ is in a row that $v_1$ is not in, then we let $r=2$.
        
        \item If no such values $v_1, v_2$ exist, then we necessarily have that $\ell = h$, and all $\ell$ values lie in all $t$ rows of $\mathcal A$. The entries of any such matrix $\mathcal A$ must satisfy, for some $\sigma \notin \mathscr{R}_k$ (due to \eqref{eq:L_expressions_tuples_asymptotics_condition_1}), that
        \begin{align} \label{eq:L_expressions_tuples_asymptotics_system}
            L\left(a_{1,1}, \dots, a_{1,h}\right) = L(a_{1,\sigma(1)}, \dots, a_{1,\sigma(h)}) = -dN+k.
        \end{align}
        We assume (towards a contradiction) that the two equations
        \begin{align} \label{eq:equation_pair}
            & u_1a_{1,1} + \cdots + u_ha_{1,h} = -dN+k;
            & u_{\sigma(1)}a_{1,1} + \cdots + u_{\sigma(h)}a_{1,h} = -dN+k
        \end{align}
        are multiples of each other. By the definition of the redundancy subgroup $\mathscr{R}_k$ and the fact that $\sigma \notin \mathscr{R}_k$, there must exist $j \in [h]$ for which $u_j \neq u_{\sigma(j)}$. Since the equations \eqref{eq:equation_pair} are multiples of each other, it follows that $-dN+k = 0$ and that $u_i \neq u_{\sigma(i)}$ for all $i \in [h]$. Furthermore, it must be that $|u_i| = |u_{\sigma(i)}|$ for all $i \in [h]$ since it would otherwise hold that
        \begin{align*}
            [u_1, \dots, u_h] \neq [u_{\sigma(1)}, \dots, u_{\sigma(h)}]
        \end{align*}
        as multisets. So we have that $u_i = -u_{\sigma(i)}$ for all $i \in [h]$, from which we deduce that $L$ is a balanced linear form, $d = m/2$, and thus $k = dN = mN/2$. In particular, \eqref{eq:equation_pair} holds with $\sigma = \sigma_\rev$. This raises the desired contradiction since $\sigma_\rev \in \mathscr{R}_{mN/2} = \mathscr{R}_k$. 
        
        We deduce that \eqref{eq:equation_pair} gives a full-rank system of linear equations in $(a_{1,1}, \dots, a_{1,h})$, for which there are $O(k^{h-2})$ solutions, agreeing with the guarantee of \cref{lem:L_expressions_tuples_upper_bounds}.
    \end{enumerate}

    \medskip
    
    \paragraph{\textbf{Case 3: $2h \leq \ell \leq 3h-1$.}} We are safe to assume $t \geq 3$. We break into two subcases.
    \begin{enumerate}
        \item If there exist values $v_1, v_2, v_3$ such that $v_2$ is in a row that $v_1$ is not in and $v_3$ is in a row that neither $v_1$ nor $v_2$ are in, then we let $r=3$. It is readily shown that if there exists a value occurring in exactly one row of $\mathcal A$, then such values $v_1, v_2, v_3$ exist. It follows from \eqref{eq:lower_bound_multiplicity_1} that this holds for all $\ell$ such that $2h+1 \leq \ell \leq 3h-1$, and for $\ell = 2h$ if $t = 3$.

        \item Otherwise, it must hold that $\ell = 2h$ and $t=4$. Here, every value occurs exactly twice in $\mathcal A$. Furthermore, these $2h$ values are arranged in such a way that $h$ values lie once in each of two rows of $\mathcal A$, and the other $h$ values lie once in each of the other two rows of $\mathcal A$. Arguing as in Case $2(2)$ and considering both of these pairs of rows, it follows that there are $O(k^{2h-4})$ solutions, which is dominated by the desired guarantee of \cref{lem:L_expressions_tuples_upper_bounds}.
    \end{enumerate}

    \medskip
    
    \paragraph{\textbf{Case 4: $\ell \geq 3h$.}} We are safe to assume $t = 4$. We break into two subcases.
    \begin{enumerate}
        \item If there exist values $v_1, \dots, v_4$ such that for every $j \in [4]$, it holds that $v_j$ is in a row of $\mathcal A$ that no $v_i$ for $i < j$ is in, then we let $r=4$. By \eqref{eq:lower_bound_multiplicity_1}, this can be done for all $\ell \geq 3h+1$, since this yields the existence of at least three rows with a value that is included exactly once in $\mathcal A$.
        
        \item Otherwise, we have that $\ell = 3h$. By \eqref{eq:lower_bound_multiplicity_1}, at least two rows of $\mathcal A$ have values occurring exactly once in $\mathcal A$. The remaining two rows of $\mathcal A$ must therefore contain the same $h$ values. Arguing as in Case $2(2)$ on these remaining two rows, it follows that there are $O(k^{h-2})$ solutions for these remaining two rows. By then choosing the $2h-2$ values not corresponding to those values that occur exactly once in $\mathcal A$, we have a total of $O(k^{3h-4})$ solutions, agreeing with the guarantee of \cref{lem:L_expressions_tuples_upper_bounds}.
    \end{enumerate}
    In all cases in which $r$ was defined, we count the number of such matrices $\mathcal A$ by choosing the $\ell-r$ values that are not the reserved values $v_1, \dots, v_r$. Due to the constraints \eqref{eq:L_expressions_tuples_asymptotics_equations_1}, we have $O(k)$ choices for each of these $\ell-r$ values. After choosing these $\ell-r$ values, the equations \eqref{eq:L_expressions_tuples_asymptotics_equations_1} reduce to a system of $t$ equations in the matrix entries $a_{i,j}$ corresponding to the reserved values $v_1, \dots, v_r$, where the $a_{i,j}$ have nonzero coefficients. These reduced equations then uniquely determine (if a solution exists) the values of these remaining $r$ values $v_1, \dots, v_r$. It is readily checked that in all of the above cases, we recover the bounds given in the statement of \cref{lem:L_expressions_tuples_upper_bounds}. The fact that there are $O(1)$ blocks in this partition completes the proof.
\end{proof}

\section*{References}

\printbibliography[heading=none]

@book{alon2016probabilistic,
  title={The probabilistic method},
  author={Alon, Noga and Spencer, Joel H},
  year={2016},
  publisher={John Wiley \& Sons}
}

@article{hegarty2009almost,
  title={When almost all sets are difference dominated},
  author={Hegarty, Peter and Miller, Steven J.},
  journal={Random Structures \& Algorithms},
  volume={35},
  number={1},
  pages={118--136},
  year={2009},
  publisher={Wiley Online Library}
}

@article{arratia1989two,
  author = {R. Arratia and L. Goldstein and L. Gordon},
    title = {{Two moments suffice for Poisson approximations: the Chen-Stein method}},
    volume = {17},
    journal = {The Annals of Probability},
    number = {1},
    publisher = {Institute of Mathematical Statistics},
    pages = {9 -- 25},
    year = {1989},
}

@article{vu2002concentration,
  title={Concentration of non-Lipschitz functions and applications},
  author={Vu, Van H},
  journal={Random Structures \& Algorithms},
  volume={20},
  number={3},
  pages={262--316},
  year={2002},
  publisher={Wiley Online Library}
}

@article{martin2006many,
  title={Many sets have more sums than differences},
  author={Martin, Greg and O'Bryant, Kevin},
  journal={Additive Combinatorics, CRM Proc. Lecture Notes},
  volume={43},
  publisher={Amer. Math. Soc.},
  year={2007}
}

@book{andrews1998theory,
  title={The theory of partitions},
  author={Andrews, George E},
  year={1998},
  publisher={Cambridge University Press}
}

@article{erdos1941distribution,
author = {Paul Erd\H{o}s and Joseph Lehner},
title = {{The distribution of the number of summands in the partitions of a positive integer}},
volume = {8},
journal = {Duke Mathematical Journal},
number = {2},
publisher = {Duke University Press},
pages = {335 -- 345},
year = {1941}
}

@article{arratia1990poisson,
  author = {Richard Arratia and Larry Goldstein and Louis Gordon},
title = {{Poisson approximation and the Chen-Stein method}},
volume = {5},
journal = {Statistical Science},
number = {4},
publisher = {Institute of Mathematical Statistics},
pages = {403 -- 424},
keywords = {invariance principle, Poisson approximation, Stein's method},
year = {1990}
}

@article{nathanson2006problems,
  title={Problems in additive number theory, I},
  author={Nathanson, Melvyn B},
  journal={Additive combinatorics, CRM Proc. Lecture Notes},
  volume={43},
  publisher={Amer. Math. Soc.},
  pages={263--270},
  year={2007}
}

@article{iyer2012generalized,
  title={Generalized more sums than differences sets},
  author={Iyer, Geoffrey and Lazarev, Oleg and Miller, Steven J. and Zhang, Liyang},
  journal={Journal of Number Theory},
  volume={132},
  number={5},
  pages={1054--1073},
  year={2012},
  publisher={Elsevier}
}

@article{nathanson2007binary,
  title={Binary linear forms over finite sets of integers},
  author={Nathanson, Melvyn B and O'Bryant, Kevin and Orosz, Brooke and Ruzsa, Imre and Silva, Manuel},
  journal={Acta Arithmetica},
  volume={129},
  number={4},
  pages={341-361},
  year={2007}
}

@book{janson2011random,
  title={Random graphs},
  author={Janson, Svante and Luczak, Tomasz and Rucinski, Andrzej},
  year={2011},
  publisher={John Wiley \& Sons}
}

@article{stanley2016some,
  title={Some asymptotic results on q-binomial coefficients},
  author={Stanley, Richard P and Zanello, Fabrizio},
  journal={Annals of Combinatorics},
  volume={20},
  pages={623--634},
  year={2016},
  publisher={Springer}
}

@article{janson2013euler,
  title={Euler-Frobenius numbers and rounding},
  author={Janson, Svante},
  journal={Online J. Analytic Comb.},
  volume={8},
  year={2013}
}

@book{feller1991introduction,
  title={An introduction to probability theory and its applications, Volume 2},
  author={Feller, William},
  volume={81},
  year={1991},
  publisher={John Wiley \& Sons}
}

@book{laplace1814theorie,
  title={Th{\'e}orie analytique des probabilit{\'e}s},
  author={Laplace, Pierre Simon},
  year={1814},
  publisher={Courcier}
}

@article{sylvester1878xxv,
  title={XXV. Proof of the hitherto undemonstrated fundamental theorem of invariants},
  author={Sylvester, James Joseph},
  journal={The London, Edinburgh, and Dublin Philosophical Magazine and Journal of Science},
  volume={5},
  number={30},
  pages={178--188},
  year={1878},
  publisher={Taylor \& Francis}
}

@article{melczer2020counting,
  title={Counting partitions inside a rectangle},
  author={Melczer, Stephen and Panova, Greta and Pemantle, Robin},
  journal={SIAM Journal on Discrete Mathematics},
  volume={34},
  number={4},
  pages={2388--2410},
  year={2020},
  publisher={SIAM}
}

@article{zhong2022combinatorial,
  title={A combinatorial proof of the unimodality and symmetry of weak composition rank sequences},
  author={Zhong, Yueming},
  journal={Annals of Combinatorics},
  pages={1--15},
  year={2022},
  publisher={Springer}
}

@book{barbour1992poisson,
  title={Poisson approximation},
  author={Barbour, Andrew D and Holst, Lars and Janson, Svante},
  year={1992},
  publisher={The Clarendon Press Oxford University Press}
}

@article{kim2000concentration,
  title={Concentration of multivariate polynomials and its applications},
  author={Kim, Jeong Han and Vu, Van H},
  journal={Combinatorica},
  volume={20},
  number={3},
  pages={417--434},
  year={2000},
  publisher={Budapest: Akademiai Kiado,[1981-}
}

@article{vu2000new,
  title={New bounds on nearly perfect matchings in hypergraphs: higher codegrees do help},
  author={Vu, Van H},
  journal={Random Structures \& Algorithms},
  volume={17},
  number={1},
  pages={29--63},
  year={2000},
  publisher={Wiley Online Library}
}

@book{aldous2013probability,
  title={Probability approximations via the Poisson clumping heuristic},
  author={Aldous, David},
  volume={77},
  year={2013},
  publisher={Springer Science \& Business Media}
}

@article{friedgut1999sharp,
  title={Sharp Thresholds of Graph Properties, and the k-Sat Problem},
  author={Friedgut, Ehud and Bourgain, Jean},
  journal={Journal of the American Mathematical Society},
  pages={1017--1054},
  year={1999},
  publisher={JSTOR}
}

\end{document}